\newtheorem{theorem}{Theorem}
\newtheorem{proposition}[theorem]{Proposition}
\newtheorem{lemma}[theorem]{Lemma}
\newtheorem{definition}[theorem]{Definition}
\newtheorem{corollary}[theorem]{Corollary}
\newtheorem{remark}[theorem]{Remark}
\numberwithin{equation}{section}
\numberwithin{theorem}{section}
\newcommand{\Z}{\ensuremath{\mathbb{Z}}}
\newcommand{\R}{\ensuremath{\mathbb{R}}}
\newcommand{\C}{\ensuremath{\mathbb{C}}}
\newcommand{\I}{\ensuremath{\sqrt{-1}}}
\newcommand{\GL}{\ensuremath{\mathrm{GL}}}
\newcommand{\SO}{\ensuremath{\mathrm{SO}}}
\newcommand{\Spin}{\ensuremath{\mathrm{Spin(7)}}}
\newcommand{\SU}{\ensuremath{\mathrm{SU}}}
\newcommand{\del}{\ensuremath{\mathrm{\partial}}}
\newcommand{\delbar}{\ensuremath{\mathrm{\overline{\partial}}}}
\newcommand{\der}{\ensuremath{\mathrm{d}}}
\newcommand{\id}{\ensuremath{\mathrm{id}}}
\newcommand{\din}{\rotatebox{90}{\ensuremath{\in}}}
\newcommand{\veq}{\rotatebox{90}{\ensuremath{=}}}
\newcommand{\vol}{\ensuremath{\mathrm{vol}}}
\newcommand{\norm}[1]{\ensuremath{\left| #1 \right|}}
\newcommand{\Norm}[1]{\ensuremath{\left\| #1 \right\|}}
\newcommand{\restrict}[2]{\ensuremath{\left. #1 \right|_{#2}}}
\DeclareMathOperator{\Image}{Im}
\DeclareMathOperator{\Real}{Re}
\DeclareMathOperator{\Ker}{Ker}
\DeclareMathOperator{\rank}{rank}
\DeclareMathOperator*{\Hol}{Hol}
\def\P{\mathbb{P}}
\begin{document}
\title{Doubling construction of Calabi-Yau threefolds}

\author{Mamoru Doi}

\address{}
 \email{doi.mamoru@gmail.com}

\author{Naoto Yotsutani}

\address{School of Mathematical Sciences at Fudan University,
Shanghai, 200433, P. R. China}
\email{naoto-yotsutani@fudan.edu.cn}

\subjclass[2000]{Primary: 53C25, Secondary: 14J32}
\keywords{Ricci-flat metrics, Calabi-Yau manifolds,
$G_2$-structures, gluing, doubling.} \dedicatory{}
\date{\today}
\maketitle

\noindent{\bfseries Abstract.}
We give a differential-geometric construction and examples of Calabi-Yau threefolds, at least one of which is {\it{new}}.
Ingredients in our construction are {\it admissible pairs}, which were dealt with by Kovalev in \cite{K03}
and further studied by Kovalev and Lee in \cite{KL11}.
An admissible pair $(\overline{X},D)$ consists of
a three-dimensional compact K\"{a}hler manifold $\overline{X}$ and
a smooth anticanonical $K3$ divisor $D$ on $\overline{X}$.
If two admissible pairs $(\overline{X}_1,D_1)$ and $(\overline{X}_2,D_2)$ satisfy
the {\it gluing condition}, we can glue $\overline{X}_1\setminus D_1$ and
$\overline{X}_2\setminus D_2$ together to obtain a Calabi-Yau threefold $M$.
In particular, if $(\overline{X}_1,D_1)$ and $(\overline{X}_2,D_2)$
are identical to an admissible pair $(\overline{X},D)$,
then the gluing condition holds automatically, so that we can {\it always} construct
a Calabi-Yau threefold from a {\it single} admissible pair $(\overline{X},D)$
by {\it doubling} it.
Furthermore, we can compute all Betti and Hodge numbers of the resulting Calabi-Yau threefolds
in the doubling construction.
\section{Introduction}
The purpose of this paper is to give a gluing construction and examples
of Calabi-Yau threefolds.
Before going into details, we recall some historical background behind
our gluing construction.

The gluing technique is used in constructing many compact manifolds
with a special geometric structure.
In particular, it is effectively used in constructing compact manifolds
with exceptional holonomy groups $G_2$ and $\Spin$,
which are also called compact $G_2$- and $\Spin$- manifolds respectively.
The first examples of compact $G_2$- and $\Spin$- manifolds were obtained by Joyce
using Kummer-type constructions in a series of his papers \cite{J96S,J96G,J99}.
Also, Joyce gave a second construction of compact $\Spin$-manifolds using compact
four-dimensional K\"{a}hler orbifolds with an antiholomorphic involution.
These constructions are based on the resolution of certain singularities
by replacing neighborhoods of singularities with ALE-type manifolds. 
Later, Clancy studied in \cite{C11} such compact K\"{a}hler orbifolds systematically and
constructed more new examples of compact $\Spin$-manifolds using Joyce's second construction.

On the other hand, Kovalev gave another construction of compact $G_2$-manifolds in \cite{K03}.
Beginning with a Fano threefold $\overline{W}$ with a smooth anticanonical $K3$ divisor $D$,
he showed that if we blow up $\overline{W}$ along a curve representing $D\cdot D$
to obtain $\overline{X}$, then $\overline{X}$ has an anticanonical divisor isomorphic to $D$
(denoted by $D$ again) with the holomorphic normal bundle $N_{D/\overline{X}}$ trivial.
Then $\overline{X}\setminus D$ admits an asymptotically cylindrical Ricci-flat K\"{a}hler metric.
(We call such $(\overline{X},D)$ an {\it admissible pair of Fano type}.)
Also, Kovalev proved that
if two admissible pairs $(\overline{X}_1,D_1)$ and $(\overline{X}_2,D_2)$
satisfy a certain condition called the {\it matching condition},
we can glue together $(\overline{X}_1\setminus D_1)\times S^1$ and $(\overline{X}_2\setminus D_2)\times S^1$
along their cylindrical ends in a {\it twisted} manner to obtain a compact $G_2$-manifold.
In this construction, Kovalev found many new examples of $G_2$-manifolds
using the classification of Fano threefolds by Mori and Mukai \cite{MM81, MM03}.
Later, Kovalev and Lee \cite{KL11} found admissible pairs of another type
(which are said to be {\it admissible pairs of non-symplectic type}) and constructed
new examples of compact $G_2$-manifolds. They used the classification of $K3$ surfaces
with a non-symplectic involution by Nikulin \cite{N80}.

In our construction, we begin with two admissible pairs $(\overline{X}_1,D_1)$
and $(\overline{X}_2,D_2)$ as above.
Then each $(\overline{X}_i\setminus D_i)\times S^1$ has a natural asymptotically cylindrical
torsion-free $G_2$-structure $\varphi_{i,{\rm cyl}}$ using the existence result of
an asymptotically cylindrical Ricci-flat K\"{a}hler form on $\overline{X}_i\setminus D_i$.
Now suppose $\overline{X}_1\setminus D_1$ and $\overline{X}_2\setminus D_2$
have the same {\it asymptotic model}, which is ensured by the {\it gluing condition}
defined later.
Then as in Kovalev's construction, we can glue together
$(\overline{X}_1\setminus D_1)\times S^1$
and $(\overline{X}_2\setminus D_2)\times S^1$,
but in a {\it non}-twisted manner to obtain $M_T\times S^1$. In short,
we glue together $\overline{X}_1\setminus D_1$ and $\overline{X}_2\setminus D_2$
along their cylindrical ends $D_1\times S^1\times (T-1,T+1)$ and
$D_2\times S^1\times (T-1,T+1)$,
and then take the product with $S^1$.
Moreover, we can glue together torsion-free $G_2$-structures to construct a
$\der$-closed $G_2$-structure $\varphi_T$ on $M_T\times S^1$.
Using the analysis on torsion-free $G_2$-structures,
we shall prove that $\varphi_T$ can be deformed into a torsion-free $G_2$-structure
for sufficiently large $T$, so that
the resulting compact manifold $M_T\times S^1$
admits a Riemannian metric with holonomy contained in $G_2$.
But if $M=M_T$ is simply-connected, then $M$ must have holonomy $\SU (3)$ according to the
Berger-Simons classification of holonomy groups of Ricci-flat Riemannian manifolds.
Hence this $M$ is a Calabi-Yau threefold.

For two given admissible pairs $(\overline{X}_1,D_1)$ and $(\overline{X}_2,D_2)$,
it is difficult to check in general whether the gluing condition holds or not.
However, if $(\overline{X}_1,D_1)$ and $(\overline{X}_2,D_2)$
are identical to an admissible pair $(\overline{X},D)$,
then the gluing condition holds automatically.
Therefore we can {\it always} construct a Calabi-Yau threefold from a {\it single} admissible pair
$(\overline{X},D)$ by {\it doubling} it.

Our doubling construction has another advantage in computing Betti and Hodge numbers of the
resulting Calabi-Yau threefolds $M$.
To compute Betti numbers of $M$, it is necessary to find out the intersection of
the images of the homomorphisms
$H^2(X_i,\R )\longrightarrow H^2(D_i,\R )$ for $i=1,2$ induced by the inclusion
$D_i\times S^1\longrightarrow X_i$, where we denote $X_i=\overline{X}_i\setminus D_i$.
In the doubling construction, the above two homomorphisms are identical, and
the intersection of their images is the same as each one.

With this construction, we shall give $123$ topologically distinct Calabi-Yau threefolds
($59$ examples from admissible pairs of Fano type and $64$ from those of non-symplectic type).
Moreover, $54$ of the Calabi-Yau threefolds from admissible pairs of non-symplectic type form mirror pairs ($24$ mirror pairs
and $6$ self mirrors). In a word, we construct Calabi-Yau threefolds and their mirrors from $K3$ surfaces. This construction was previously investigated by Borcea and Voisin \cite{BV97} using algebro-geometric methods. Thus, our doubling construction from non-symplectic type
can be interpreted as a differential-geometric analogue of the Borcea-Voisin construction. 
Furthermore, the remaining $10$ examples from non-symplectic type
 contain at least one new example. See `Discussion' in Section \ref{subset:non-symp} for more details. Meanwhile,
$59$ examples from admissible pairs of Fano type are essentially the same Calabi-Yau threefolds constructed by Kawamata and Namikawa \cite{KM94} and later developed by Lee \cite{L10} using normal crossing varieties. Hence our construction from Fano type
provides a differential-geometric interpretation of Lee's construction \cite{L10}.

This paper is organized as follows.
Section 2 is a brief review of $G_2$-structures. In Section 3 we establish our
gluing construction of Calabi-Yau threefolds from admissible pairs.
The rest of the paper is devoted to constructing examples
and computing Betti and Hodge numbers of Calabi-Yau threefolds
obtained in our doubling construction.
The reader who is not familiar with analysis can
check Definition \ref{def:admissible} of admissible pairs, go to Section 3.4 where
the gluing theorems are stated, and then proceed to Section 4,
skipping Section 2 and the rest of Section 3.
In Section 4 we will find a formula for computing Betti numbers of the resulting
Calabi-Yau threefolds $M$ in our doubling construction.
In Section 5, we recall two types of admissible pairs and rewrite
the formula given in Section 4 to obtain formulas of Betti and Hodge numbers of $M$
in terms of certain invariants which characterize admissible pairs.
Then the last section lists all data of the Calabi-Yau threefolds obtained in our construction.

The first author is mainly responsible for Sections 1--3,
and the second author mainly for Sections 4--6.\\

\noindent{\bfseries Acknowledgements.} This joint work was partially
developed when the second author attended ``2012 Complex Geometry
and Symplectic Geometry Conference" which was held at Ningbo
University, Zhejian Province, China in July, 2012. He thanks
Professor Xiuxiong Chen for an invitation to give a talk in the conference.
Also, he thanks Professor Yuguang Zhang for a helpful comment during the
conference. The authors thank Dr. Nam-Hoon Lee for pointing out that our construction partially covers
the Borcea-Voisin method \cite{BV97}. Also, they would like to thank the referee and Dr. Shintaro Kuroki for valuable comments.
Especially, the referee pointed out a gap of the proof of Lemma \ref{lem:RFonM} and advised them how to correct it, giving a good reference \cite{MA13}.
The second author is partially supported by the China
Postdoctoral Science Foundation Grant, No. 2011M501045 and the Chinese Academy
of Sciences Fellowships for Young International Scientists 2011Y1JB05.

\section{Geometry of $G_2$-structures}
Here we shall recall some basic facts about $G_2$-structures on oriented $7$-manifolds.
For more details, see Joyce's book \cite{J00} .

We begin with the definition of $G_2$-structures on oriented vector spaces of dimension $7$.
\begin{definition}\rm
Let $V$ be an oriented real vector space of dimension $7$.
Let $\{\bm{\theta}^1,\dots ,\bm{\theta}^7\}$ be an oriented basis of $V$.
Set
\begin{equation}\label{eq:phi0-g0}
\begin{aligned}
\bm{\varphi}_0&=\bm{\theta}^{123}+\bm{\theta}^{145}+\bm{\theta}^{167}+\bm{\theta}^{246}
-\bm{\theta}^{257}-\bm{\theta}^{347}-\bm{\theta}^{356},\\
\bm{g}_0&=\sum_{i=1}^7\bm{\theta}^i\otimes\bm{\theta}^i,
\end{aligned}
\end{equation}
where $\bm{\theta}^{ij\dots k}=\bm{\theta}^i\wedge\bm{\theta}^j\wedge\dots\wedge\bm{\theta}^k$.
Define the $\GL_+(V)$-orbit spaces
\begin{align*}
\mathcal{P}^3(V)&=\Set{a^*\bm{\varphi}_0|a\in\GL_+(V)},\\
\mathcal{M}et(V)&=\Set{a^*\bm{g}_0|a\in\GL_+(V)}.
\end{align*}
We call $\mathcal{P}^3(V)$ the set of {\it positive $3$-forms}
(also called the set of {\it $G_2$-structures} or {\it associative $3$-forms}) on $V$.
On the other hand, $\mathcal{M}et(V)$ is the set of positive-definite inner products on $V$,
which is also a homogeneous space isomorphic to $\GL_+(V)/\SO(V)$, where $\SO(V)$ is defined by
\begin{equation*}
\SO(V)=\Set{a\in\GL_+(V)|a^*\bm{g}_0=\bm{g}_0}.
\end{equation*}

Now the group $G_2$ is defined as the isotropy of the action of $\GL(V)$ (in place of $\GL_+(V)$)
on $\mathcal{P}^3(V)$ at $\bm{\varphi}_0$:
\begin{equation*}
G_2=\Set{a\in\GL(V)|a^*\bm{\varphi}_0=\bm{\varphi}_0}.
\end{equation*}
Then one can show that $G_2$ is a compact Lie group of dimension $14$ which is a
Lie subgroup of $\SO(V)$ \cite{Hy90}.
Thus we have a natural projection
\begin{equation}\label{eq:G2metric}
\xymatrix{\mathcal{P}^3(V)\cong\GL_+(V)/G_2\ar@{>>}[r]&\GL_+(V)/\SO(V)\cong\mathcal{M}et(V)},
\end{equation}
so that each positive $3$-form (or $G_2$-structure) $\bm{\varphi}\in\mathcal{P}^3(V)$
defines a positive-definite inner product $\bm{g}_{\bm{\varphi}}\in\mathcal{M}et(V)$ on $V$.
In particular, \eqref{eq:G2metric} maps $\bm{\varphi}_0$ to $\bm{g}_0$ in \eqref{eq:phi0-g0}.
Note that both $\mathcal{P}^3(V)$ and $\mathcal{M}et(V)$ depend only on the orientation of $V$
and are independent of the choice of an oriented basis $\{\bm{\theta}^1,\dots ,\bm{\theta}^7\}$,
and so is the map \eqref{eq:G2metric}. Note also that
\begin{equation*}
\dim_\R\mathcal{P}^3(V)=\dim_\R\GL_+(V)-\dim_\R G_2=7^2-14=35,
\end{equation*}
which is the same as $\dim_\R\wedge^3 V$.
This implies that $\mathcal{P}^3(V)$ is an {\it open} subset of $\wedge^3 V$.
The following lemma is immediate.
\begin{lemma}\label{lem:ep-nbd}
There exists a constant $\rho_* >0$ such that for any
$\bm{\varphi}\in\mathcal{P}^3(V)$, if $\widetilde{\bm{\varphi}}\in\wedge^3
V$ satisfies
$\norm{\widetilde{\bm{\varphi}}-\bm{\varphi}}_{\bm{g}_{\bm{\varphi}}}<\rho_*$,
then $\widetilde{\bm{\varphi}}\in\mathcal{P}^3(V)$.
\end{lemma}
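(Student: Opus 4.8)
The plan is to exploit the homogeneity already built into the definitions: the statement that $\mathcal{P}^3(V)$ is open in $\wedge^3 V$ only produces, for each fixed $\bm{\varphi}$, \emph{some} neighborhood whose radius might a priori depend on $\bm{\varphi}$, whereas the lemma asks for a single radius $\rho_*$ valid for all $\bm{\varphi}$ once distance is measured in the intrinsic norm $\norm{\cdot}_{\bm{g}_{\bm{\varphi}}}$. I would first fix the radius at the base point $\bm{\varphi}_0$ and then carry it to an arbitrary $\bm{\varphi}$ by the transitive $\GL_+(V)$-action. Concretely, since $\mathcal{P}^3(V)$ is open and contains $\bm{\varphi}_0$, choose $\rho_*>0$ with
\begin{equation*}
\Set{\widetilde{\bm{\psi}}\in\wedge^3 V|\norm{\widetilde{\bm{\psi}}-\bm{\varphi}_0}_{\bm{g}_0}<\rho_*}\subset\mathcal{P}^3(V);
\end{equation*}
this $\rho_*$ will be the constant claimed in the lemma.

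The key technical ingredient is an equivariance of the induced norm: for every $a\in\GL_+(V)$ and every $\alpha\in\wedge^3 V$,
\begin{equation*}
\norm{a^*\alpha}_{a^*\bm{g}_0}=\norm{\alpha}_{\bm{g}_0}.
\end{equation*}
I would prove this by observing that $a\colon(V,a^*\bm{g}_0)\to(V,\bm{g}_0)$ is a linear isometry by the very definition of the pullback metric, so $a^*$ sends a $\bm{g}_0$-orthonormal basis of $\wedge^3 V$ (for instance the degree-three wedge monomials in the $\bm{\theta}^i$ of \eqref{eq:phi0-g0}) to an $a^*\bm{g}_0$-orthonormal basis, and is therefore an isometry between the two induced inner products.

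To conclude, take an arbitrary $\bm{\varphi}\in\mathcal{P}^3(V)$ and use transitivity to write $\bm{\varphi}=a^*\bm{\varphi}_0$ with $a\in\GL_+(V)$; by \eqref{eq:G2metric} the associated metric is $\bm{g}_{\bm{\varphi}}=a^*\bm{g}_0$. Given $\widetilde{\bm{\varphi}}\in\wedge^3 V$, set $\widetilde{\bm{\psi}}=(a^{-1})^*\widetilde{\bm{\varphi}}$, so that $\widetilde{\bm{\varphi}}-\bm{\varphi}=a^*(\widetilde{\bm{\psi}}-\bm{\varphi}_0)$, and the equivariance above gives
\begin{equation*}
\norm{\widetilde{\bm{\varphi}}-\bm{\varphi}}_{\bm{g}_{\bm{\varphi}}}=\norm{\widetilde{\bm{\psi}}-\bm{\varphi}_0}_{\bm{g}_0}.
\end{equation*}
Hence $\norm{\widetilde{\bm{\varphi}}-\bm{\varphi}}_{\bm{g}_{\bm{\varphi}}}<\rho_*$ forces $\widetilde{\bm{\psi}}$ into the ball above, so $\widetilde{\bm{\psi}}=b^*\bm{\varphi}_0$ for some $b\in\GL_+(V)$, whence $\widetilde{\bm{\varphi}}=a^*\widetilde{\bm{\psi}}=(ba)^*\bm{\varphi}_0\in\mathcal{P}^3(V)$ because $ba\in\GL_+(V)$.

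I expect the equivariance of the norm to be the only nonformal step and hence the main obstacle: it is exactly this invariance that upgrades the pointwise openness at $\bm{\varphi}_0$ to a radius uniform over the entire orbit $\mathcal{P}^3(V)$. The remaining ingredients---transitivity of the action and the well-definedness of $\bm{\varphi}\mapsto\bm{g}_{\bm{\varphi}}$---are already supplied by the construction of the projection \eqref{eq:G2metric}.
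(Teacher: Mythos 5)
Your proof is correct, and it is precisely the homogeneity argument the paper has in mind when it declares the lemma ``immediate'' and omits the proof: openness of the single orbit $\mathcal{P}^3(V)$ at the base point $\bm{\varphi}_0$, transported to every $\bm{\varphi}=a^*\bm{\varphi}_0$ by the $\GL_+(V)$-equivariance of the norm $\norm{\cdot}_{\bm{g}_{\bm{\varphi}}}$. Nothing is missing; your write-up simply makes explicit the steps the authors leave to the reader.
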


\end{definition}
\begin{remark}\rm
Here is an alternative definition of $G_2$-structures.
But the reader can skip the following.
Let $V$ be an oriented real vector space of dimension $7$ with orientation $\bm{\mu}_0$.
Let $\bm{\Omega}\in\wedge^7 V^*$ be a volume form which is positive
with respect to the orientation $\bm{\mu}_0$.
Then $\bm{\varphi}\in\wedge^3 V^*$ is a positive $3$-form
on $V$ if an inner product $\bm{g}_{\bm{\Omega}, \bm{\varphi}}$ given by
\begin{equation*}
\iota_{\bm{u}}\bm{\varphi}\wedge\iota_{\bm{v}}\bm{\varphi}\wedge\bm{\varphi} 
=6\; \bm{g}_{\bm{\Omega},\bm{\varphi}}(\bm{u},\bm{v})\bm{\Omega}
\quad\text{for }\bm{u},\bm{v}\in V
\end{equation*}
is positive-definite, where $\iota_{\bm{u}}$ denotes interior product
by a vector $\bm{u}\in V$, from which comes the name `positive form'.
Whether $\bm{\varphi}$ is a positive $3$-form
depends only on the orientation $\bm{\mu}_0$ of $V$, and is independent of
the choice of a positive volume form $\bm{\Omega}$.
One can show that if $\bm{\varphi}$ is a positive $3$-form on $(V,\bm{\mu}_0 )$,
then there exists a unique positive-definite inner product $\bm{g}_{\bm{\varphi}}$ such that
\begin{equation*}
\iota_{\bm{u}}\bm{\varphi}\wedge\iota_{\bm{v}}\bm{\varphi}\wedge\bm{\varphi} 
=6\; \bm{g}_{\bm{\varphi}}(\bm{u},\bm{v})\vol_{\bm{g}_{\bm{\varphi}}}
\quad\text{for }\bm{u},\bm{v}\in V,
\end{equation*}
where $\vol_{\bm{\varphi}}$ is a volume form determined by $\bm{g}_{\bm{\varphi}}$ and 
$\bm{\mu}_0$.
The map $\bm{\varphi}\longmapsto \bm{g}_{\bm{\varphi}}$ gives \eqref{eq:G2metric} explicitly.
One can also prove that there exists an othogornal basis $\{\bm{\theta}^1,\dots ,\bm{\theta}^7\}$
with respect to $\bm{g}_{\bm{\varphi}}$ such that $\bm{\varphi}$ and $\bm{g}_{\bm{\varphi}}$ 
are written in the same form as $\bm{\varphi}_0$ and $\bm{g}_0$ in \eqref{eq:phi0-g0}.
\end{remark}
Now we define $G_2$-structures on oriented $7$-manifolds.
\begin{definition}\rm
Let $M$ be an oriented $7$-manifold.
We define $\mathcal{P}^3(M)\longrightarrow M$ to be the fiber bundle whose fiber over $x$ is
$\mathcal{P}^3(T^*_x M)\subset\wedge^3 T^*_x M$. Then
$\varphi\in C^\infty (\wedge^3 T^* M)$ is a {\it positive $3$-form}
(also an {\it associative $3$-form} or a {\it $G_2$-structure}) on $M$ if
$\varphi\in C^\infty (\mathcal{P}^3(M))$, i.e.,
$\varphi$ is a smooth section of $\mathcal{P}^3(M)$.
If $\varphi$ is a $G_2$-structure on $M$, then $\varphi$ induces a Riemannian metric $g_\varphi$
since each $\restrict{\varphi}{x}$ for $x\in M$ induces a positive-definite inner product
$g_{\restrict{\varphi}{x}}$ on $T_x M$. A $G_2$-structure $\varphi$ on $M$ is said to be
{\it torsion-free} if it is parallel with respect to the induced Riemannian metric $g_\varphi$,
i.e., $\nabla_{g_\varphi}\varphi =0$,
where $\nabla_{g_\varphi}$ is the Levi-Civita connection of $g_\varphi$.
\end{definition}
\begin{lemma}\label{lem:ep-nbd_2}
Let $\rho_*$ be the constant given in Lemma $\ref{lem:ep-nbd}$. For
any $\varphi\in\mathcal{P}^3(M)$, if $\widetilde{\varphi}\in
C^\infty (\wedge^3T^*M)$ satisfies $\Norm{\widetilde{\varphi}
-\varphi}_{C^0}<\rho_*$, then
$\widetilde{\varphi}\in\mathcal{P}^3(M)$, where $\Norm{\cdot}_{C^0}$
is measured using the metric $g_{\varphi}$ on $M$.
\end{lemma}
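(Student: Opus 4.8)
The plan is to deduce this global statement by applying the fiberwise linear-algebra result of Lemma \ref{lem:ep-nbd} at every point of $M$. The only point requiring care is that the constant $\rho_*$ produced in Lemma \ref{lem:ep-nbd} can be chosen \emph{uniformly} over all the cotangent spaces $T^*_x M$, $x\in M$; once this is granted, the argument is a pointwise transcription followed by the remark that a smooth global form lying fiberwise in the open subbundle $\mathcal{P}^3(M)$ is automatically a smooth section of it.

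First I would observe that $\rho_*$ does not actually depend on the particular oriented $7$-dimensional vector space $V$. Indeed, any two such spaces are related by an orientation-preserving linear isomorphism, and the data $\mathcal{P}^3(V)$, the associated inner product $\bm{g}_{\bm{\varphi}}$, and hence the norm $\norm{\cdot}_{\bm{g}_{\bm{\varphi}}}$, are all natural under $\GL_+$. More precisely, $\mathcal{P}^3(V)$ is a single $\GL_+(V)$-orbit (the identification $\mathcal{P}^3(V)\cong\GL_+(V)/G_2$ from \eqref{eq:G2metric}) and the assignment $\bm{\varphi}\longmapsto\bm{g}_{\bm{\varphi}}$ is $\GL_+(V)$-equivariant, so the supremal radius of a $\bm{g}_{\bm{\varphi}}$-ball about $\bm{\varphi}$ contained in the open set $\mathcal{P}^3(V)$ is independent both of $\bm{\varphi}\in\mathcal{P}^3(V)$ and of $V$. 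Thus one and the same $\rho_*$ works for every fiber $T^*_x M$.

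Next, fix $x\in M$ and take $V=T^*_x M$ with the orientation induced from $M$. By the definition of the $C^0$-norm one has $\norm{\restrict{\widetilde{\varphi}}{x}-\restrict{\varphi}{x}}<\Norm{\widetilde{\varphi}-\varphi}_{C^0}<\rho_*$, where the pointwise norm on $\wedge^3 T^*_x M$ induced by $g_\varphi$ coincides, by construction of $g_\varphi$ via the fiberwise map \eqref{eq:G2metric}, with the norm $\norm{\cdot}_{\bm{g}_{\restrict{\varphi}{x}}}$ appearing in Lemma \ref{lem:ep-nbd} for $V=T^*_x M$ (a routine unwinding of definitions). Applying that lemma with $\bm{\varphi}=\restrict{\varphi}{x}$ and $\widetilde{\bm{\varphi}}=\restrict{\widetilde{\varphi}}{x}$ yields $\restrict{\widetilde{\varphi}}{x}\in\mathcal{P}^3(T^*_x M)$. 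Since $x$ was arbitrary, $\widetilde{\varphi}$ lies fiberwise in $\mathcal{P}^3(M)$; as $\widetilde{\varphi}\in C^\infty(\wedge^3 T^*M)$ is already smooth, it is a smooth section of the open subbundle $\mathcal{P}^3(M)$, i.e.\ $\widetilde{\varphi}\in\mathcal{P}^3(M)$.

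The main (and essentially only) obstacle is the uniformity claim of the second paragraph: one must check that the $\GL_+(V)$-equivariance of $\bm{\varphi}\longmapsto\bm{g}_{\bm{\varphi}}$ genuinely forces $\rho_*$ to be constant along the orbit $\mathcal{P}^3(V)$ and across the isomorphism class of $V$, rather than merely locally bounded below. This is exactly where the homogeneity of $\mathcal{P}^3(V)$ under $\GL_+(V)$ is used, and it is this homogeneity that makes the passage from the single value $\rho_*$ of Lemma \ref{lem:ep-nbd} to the global $C^0$-statement possible.
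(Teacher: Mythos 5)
Your argument is correct and is exactly the intended one: the paper states this lemma without proof, treating it as an immediate pointwise application of Lemma \ref{lem:ep-nbd}, and your proposal simply makes explicit the two points that justify the word ``immediate'' --- the uniformity of $\rho_*$ across fibers (via the $\GL_+$-homogeneity of $\mathcal{P}^3(V)$ and the equivariance of $\bm{\varphi}\longmapsto\bm{g}_{\bm{\varphi}}$) and the fiberwise transcription. No discrepancy with the paper's approach.
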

The following result is one of the most important results in the geometry of
the exceptional holonomy group $G_2$,
relating the holonomy contained in $G_2$ with the $\der$- and $\der^*$-closedness
of the $G_2$-structure.
\begin{theorem}[Salamon \cite{S89}, Lemma 11.5]
Let $M$ be an oriented $7$-manifold. Let $\varphi$ be a $G_2$-structure on $M$ and $g_\varphi$
the induced Riemannian metric on $M$.
Then the following conditions are equivalent.
\renewcommand{\labelenumi}{\theenumi}
\renewcommand{\theenumi}{(\arabic{enumi})}
\begin{enumerate}
\item $\varphi$ is a torsion-free $G_2$-structure, i.e., $\nabla_{g_\varphi}\varphi =0$.
\item $\der\varphi =\der *_{g_\varphi}\varphi =0$, where $*_{g_\varphi}$ is the Hodge star
operator induced by $g_\varphi$.
\item $\der\varphi =\der^*_{g_\varphi}\varphi =0$, where
$d^*_{g_\varphi}=-*_{g_\varphi}\der*_{g_\varphi}$ is the formal adjoint operator of $\der$.
\item The holonomy group $\Hol (g_\varphi )$ of $g_\varphi$ is contained in $G_2$.
\end{enumerate}
\end{theorem}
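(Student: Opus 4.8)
The plan is to run the cycle of implications by separating the representation-theoretic content from the differential-geometric content, starting with the cheapest step, (2) $\Leftrightarrow$ (3). Since $*_{g_\varphi}\varphi$ is a $4$-form and $\der^*_{g_\varphi}=-*_{g_\varphi}\der *_{g_\varphi}$ on $3$-forms, the equation $\der^*_{g_\varphi}\varphi=0$ reads $*_{g_\varphi}\der *_{g_\varphi}\varphi=0$; as $*_{g_\varphi}$ is a fibrewise isomorphism, this holds if and only if $\der *_{g_\varphi}\varphi=0$. Hence (2) and (3) express the same pair of equations, and I would dispose of this equivalence immediately.

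Next I would treat (1) $\Rightarrow$ (2) and (1) $\Leftrightarrow$ (4), both of which rest on the good behaviour of $\nabla_{g_\varphi}$-parallel tensors. For (1) $\Rightarrow$ (2): the metric $g_\varphi$ and the orientation are parallel, so the Hodge star $*_{g_\varphi}$ is parallel, and $\nabla_{g_\varphi}\varphi=0$ therefore forces $\nabla_{g_\varphi}(*_{g_\varphi}\varphi)=0$ as well. Any parallel form is both closed and coclosed, since $\der$ is the antisymmetrisation of $\nabla_{g_\varphi}$ and $\der^*_{g_\varphi}$ is, up to sign, its metric trace; this yields $\der\varphi=0$ and $\der *_{g_\varphi}\varphi=0$. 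For (1) $\Leftrightarrow$ (4) I would invoke the holonomy principle: on a connected manifold a tensor field is $\nabla_{g_\varphi}$-parallel if and only if its value at one point is fixed by the holonomy group $\Hol (g_\varphi)$. Since $G_2$ is by definition the stabiliser of $\bm{\varphi}_0$ in $\GL(V)$, and $G_2\subset\SO(V)$, the form $\varphi$ is parallel if and only if $\Hol (g_\varphi)$ fixes $\restrict{\varphi}{x}$, that is, if and only if $\Hol (g_\varphi)\subseteq G_2$.

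The substance of the theorem is the remaining implication (2) $\Rightarrow$ (1), and this is where I expect the real difficulty. The point is that $\nabla_{g_\varphi}\varphi$ is the intrinsic torsion of the $G_2$-structure and must be reconstructed from the first-order invariants $\der\varphi$ and $\der *_{g_\varphi}\varphi$. Because $\norm{\varphi}$ is pointwise constant and $\nabla_{g_\varphi}$ preserves $g_\varphi$, the tensor $\nabla_{g_\varphi}\varphi$ takes values in $T^*M\otimes\mathfrak{m}$, where $\mathfrak{m}$ is the orthogonal complement of $\mathfrak{g}_2$ in $\mathfrak{so}(7)$ and is isomorphic, as a $G_2$-module, to the standard $7$-dimensional representation. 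Decomposing $T^*M\otimes\mathfrak{m}\cong\R^7\otimes\R^7$ into irreducible $G_2$-modules produces the four Fern\'andez--Gray torsion classes $W_1\oplus W_2\oplus W_3\oplus W_4$ of dimensions $1+14+27+7$. Using $\der\varphi=\sum_i\bm{\theta}^i\wedge\nabla_i\varphi$ and $\der *_{g_\varphi}\varphi=\sum_i\bm{\theta}^i\wedge\nabla_i(*_{g_\varphi}\varphi)$, I would compute the $G_2$-equivariant linear maps carrying the torsion into $\wedge^4 T^*M$ and $\wedge^5 T^*M$, and verify by Schur's lemma that the combined map into $\wedge^4 T^*M\oplus\wedge^5 T^*M$ is injective on each irreducible summand. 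Granting this, the hypotheses $\der\varphi=0$ and $\der *_{g_\varphi}\varphi=0$ annihilate every torsion component, forcing $\nabla_{g_\varphi}\varphi=0$, which is (1). The main obstacle is precisely this equivariant computation: one must identify the image of each $W_j$ under exterior differentiation and check that no nonzero torsion class lies in the common kernel. This is a finite check on the irreducible $G_2$-representations appearing in $\wedge^\bullet\R^7$, and it is where the genuine content of Salamon's lemma resides.
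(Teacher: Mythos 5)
The paper does not actually prove this statement: it is quoted as a known result of Salamon \cite{S89} (essentially the Fern\'andez--Gray characterisation of torsion-free $G_2$-structures), so there is no in-paper argument to compare yours against. On its own terms, your easy implications are fine: (2) $\Leftrightarrow$ (3) is immediate from $\der^*=-*\der*$ and invertibility of $*$, and (1) $\Rightarrow$ (2) from parallelism of $*_{g_\varphi}$ together with the fact that parallel forms are closed and coclosed. Your (1) $\Leftrightarrow$ (4) via the holonomy principle is the standard route, but the direction (4) $\Rightarrow$ (1) needs one more sentence: invariance of $\restrict{\varphi}{x}$ under $\Hol(g_\varphi)$ produces \emph{a} parallel positive $3$-form agreeing with $\varphi$ at $x$, and you must still argue that this parallel field coincides with $\varphi$ everywhere; since the fibre of $\mathcal{P}^3(V)\to\mathcal{M}et(V)$ is $\SO(7)/G_2$, a metric with holonomy in $G_2$ carries many compatible positive $3$-forms, not all of them parallel, which is why careful statements of (4) specify that $\varphi$ is the $3$-form attached to the holonomy reduction.

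The genuine gap is in (2) $\Rightarrow$ (1), which you rightly identify as the substance of the lemma. You name the correct machinery --- intrinsic torsion valued in $T^*M\otimes\mathfrak{m}$ with $\mathfrak{m}\cong\R^7$, the Fern\'andez--Gray decomposition $W_1\oplus W_2\oplus W_3\oplus W_4$ of dimensions $1+14+27+7$, and Schur's lemma applied to the equivariant maps into $\wedge^4T^*M$ and $\wedge^5T^*M$ --- but you do not perform the computation; you only assert that the combined map is injective on each irreducible summand. That injectivity is precisely the theorem: one must check that $\wedge^4\cong\wedge^4_1\oplus\wedge^4_7\oplus\wedge^4_{27}$ and $\wedge^5\cong\wedge^5_7\oplus\wedge^5_{14}$, that $\der\varphi$ detects the components of type $W_1$, $W_3$, $W_4$ while $\der*_{g_\varphi}\varphi$ detects those of type $W_2$, $W_4$, and that the relevant equivariant maps are nonzero on each summand, so that no nonzero torsion lies in the common kernel. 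As written, your argument defers exactly this step, so it is a correct and standard plan rather than a proof; to close it you must either carry out the representation-theoretic verification or cite Fern\'andez--Gray or Salamon for it, which is in effect what the paper does by stating the result without proof.
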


\section{The Gluing Procedure}

\subsection{Compact complex manifolds with an anticanonical divisor}\label{section:CMWAD}
We suppose that $\overline{X}$ is a compact complex manifold of dimension $m$,
and $D$ is a smooth irreducible anticanonical divisor on $\overline{X}$.
We recall some results in \cite{D09}, Sections $3.1$ and $3.2$.
\begin{lemma}\label{lem:coords_on_X}
Let $\overline{X}$ be a compact complex manifold of dimension $m$ and $D$ a smooth irreducible
anticanonical divisor on $\overline{X}$.
Then there exists a local coordinate system
$\{ U_\alpha ,(z_\alpha^1,\dots ,z_\alpha^{m-1},w_\alpha)\}$ on $\overline{X}$
such that
\renewcommand{\labelenumi}{\theenumi}
\renewcommand{\theenumi}{(\roman{enumi})}
\begin{enumerate}
\item $w_\alpha$ is a local defining function of $D$ on $U_\alpha$,
i.e., $D\cap U_\alpha =\{w_\alpha =0\}$, and
\item the $m$-forms $\displaystyle\Omega_\alpha =
\frac{\der w_\alpha}{w_\alpha}\wedge\der z_\alpha^1\wedge\dots\wedge\der z_\alpha^{m-1}$
on $U_\alpha$ together yield a holomorphic volume form $\Omega$ on $X=\overline{X}\setminus D$.
\end{enumerate}
\end{lemma}
Next we shall see that $X=\overline{X}\setminus D$ is a cylindrical manifold whose structure is
induced from the holomorphic normal bundle $N=N_{D/\overline{X}}$ to $D$ in $\overline{X}$,
where the definition of cylindrical manifolds is given as follows.
\begin{definition}\label{def:cyl.mfd}\rm
Let $X$ be a noncompact differentiable manifold of dimension $n$.
Then $X$ is called a {\it cylindrical manifold} or a {\it manifold
with a cylindrical end} if there exists a diffeomorphism $\pi
:X\setminus X_0\longrightarrow\Sigma\times\R_+=\Set{(p,t)|p\in\Sigma
,0<t<\infty}$ for some compact submanifold $X_0$ of dimension $n$
with boundary $\Sigma=\del X_0$. Also, extending $t$ smoothly to $X$ so that
$t\leqslant 0$ on $X\setminus X_0$, we call $t$ a {\it cylindrical
parameter} on $X$.
\end{definition}
Let $(x_\alpha ,y_\alpha)$ be local coordinates on $V_\alpha =U_\alpha\cap D$,
such that $x_\alpha$ is the restriction of $z_\alpha$ to $V_\alpha$ and
$y_\alpha$ is a coordinate in the fiber direction.
Then one can see easily that $\der x_\alpha^1\wedge\dots\wedge\der x_\alpha^{m-1}$ on
$V_\alpha$ together yield a holomorphic volume form $\Omega_D$, which is also called the
{\it Poincar\'{e} residue} of $\Omega$ along $D$.
Let $\Norm{\cdot}$ be the norm of a Hermitian bundle metric on $N$.
We can define a cylindrical parameter $t$ on $N$
by $t=-\frac{1}{2}\log\Norm{s}^2$ for $s\in N\setminus D$.
Then the local coordinates $(z_\alpha ,w_\alpha )$ on $X$ are asymptotic to
the local coordinates $(x_\alpha ,y_\alpha )$ on $N\setminus D$ in the following sense.

\begin{lemma}\label{lem:tub.nbd.thm}
There exists a diffeomorphism $\Phi$ from a neighborhood $V$ of the
zero section of $N$ containing $t^{-1}(\R_+ )$ to a tubular neighborhood of $U$ of $D$
in $X$ such that $\Phi$ can be locally written as
\begin{equation*}
\begin{aligned}
z_\alpha &=x_\alpha + O(\norm{y_\alpha}^2)=x_\alpha +O(e^{-t}),\\
w_\alpha &=y_\alpha + O(\norm{y_\alpha}^2)=y_\alpha +O(e^{-t}),
\end{aligned}
\end{equation*}
where we multiply all $z_\alpha$ and $w_\alpha$ by a single constant
to ensure $t^{-1}(\R_+ )\subset V$ if necessary.
\end{lemma}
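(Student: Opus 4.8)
The plan is to realize $\Phi$ as a tubular-neighborhood diffeomorphism that is adapted, to first order along $D$, to the holomorphic coordinates of Lemma \ref{lem:coords_on_X}, and then to read off the stated decay from a Taylor expansion in the fibre coordinate together with the definition of $t$. First I would fix the local data. By Lemma \ref{lem:coords_on_X} the function $w_\alpha$ cuts out $D$ on $U_\alpha$, so $\der w_\alpha|_D$ is a nowhere-zero section of the conormal bundle $N^*$; dually I normalize the fibre coordinate $y_\alpha$ of $N$ over $V_\alpha=U_\alpha\cap D$ to be the linear coordinate determined by the frame dual to $\der w_\alpha|_D$. With this choice the transition functions of $N$ are exactly the leading normal Jacobians $g_{\beta\alpha}=(\partial w_\beta/\partial w_\alpha)|_D$ of the holomorphic coordinate changes, and $x_\alpha=z_\alpha|_D$ by hypothesis.

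Next I would construct $\Phi$ so that (i) it restricts to the identity on the zero section, giving $z_\alpha\circ\Phi|_{y_\alpha=0}=x_\alpha$ and $w_\alpha\circ\Phi|_{y_\alpha=0}=0$, and (ii) its fibrewise derivative along $D$ is the coordinate normal field, $\der\Phi(\partial_{y_\alpha})=\partial_{w_\alpha}$ on $D$. Condition (ii) is the substantive one: it forces the normal derivative of the tangential coordinates $z_\alpha$ to vanish and that of $w_\alpha$ to equal $1$ along $D$. Granting such a $\Phi$, a first-order Taylor expansion in $y_\alpha$ about the zero section yields
\begin{align*}
z_\alpha\circ\Phi &= x_\alpha + 0\cdot y_\alpha + O(\norm{y_\alpha}^2),\\
w_\alpha\circ\Phi &= 0 + 1\cdot y_\alpha + O(\norm{y_\alpha}^2),
\end{align*}
which is exactly the asserted form.

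Finally I would convert fibre decay into decay in $t$ and fix the domain. Since $t=-\tfrac12\log\norm{s}^2$ we have $\norm{s}=e^{-t}$, and $\norm{s}$ differs from $\norm{y_\alpha}$ only by the smooth positive conformal factor of the Hermitian metric on $N$; hence $\norm{y_\alpha}=O(e^{-t})$ and $O(\norm{y_\alpha}^2)=O(e^{-2t})=O(e^{-t})$ on $t^{-1}(\R_+)$. Rescaling all $z_\alpha,w_\alpha$ by a single positive constant (equivalently, shrinking $\norm{\cdot}$) enlarges $V$ so that $t^{-1}(\R_+)\subset V$, as stated.

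The main obstacle is arranging condition (ii) \emph{globally}. In a single chart one can take $\Phi_\alpha(x_\alpha,y_\alpha)$ to be the point with coordinates $(x_\alpha,y_\alpha)$, which satisfies (ii) on the nose; but on overlaps $\der\Phi(\partial_{y_\alpha})$ and $\partial_{w_\alpha}$ differ by the tangential vector $\sum_i(\partial z_\beta^i/\partial w_\alpha)|_D\,\partial_{z_\beta^i}$, so these local models do not patch to one diffeomorphism with exact first-order matching, and an arbitrary-metric exponential map yields only the weaker linear bound $O(e^{-t})$ for $z_\alpha$. Overcoming this is where the holomorphicity of the transition functions and the adapted coordinates of Lemma \ref{lem:coords_on_X} enter: I would patch the local models with a partition of unity and verify that, after the volume-form normalization of $y_\alpha$, the overlap discrepancies are holomorphic and vanish to first order in the normal direction, so the residual mismatch contributes only at order $O(\norm{y_\alpha}^2)$. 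Establishing this quadratic control on the patched map — rather than the merely linear bound — is the technical heart of the argument, and is the step I would expect to require the most care.
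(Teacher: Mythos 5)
The paper does not actually prove this lemma; it is imported from \cite{D09}, Sections 3.1--3.2, so your attempt can only be judged on its own terms. Your framework is the right one, and you have correctly located the crux --- globalizing the first-order normalization $d\Phi(\partial_{y_\alpha})=\partial_{w_\alpha}$ --- but the mechanism you propose for closing that gap does not work, and the gap is genuine rather than a deferred technicality. Write the holomorphic transition as $z_\beta=\phi_{\beta\alpha}(z_\alpha)+h_{\beta\alpha}(z_\alpha)\,w_\alpha+O(w_\alpha^2)$ with $h_{\beta\alpha}=(\partial z_\beta/\partial w_\alpha)|_D$. Then the chart-$\alpha$ local model sends $(x_\alpha,y_\alpha)$ to a point whose $z_\beta$-coordinate is $\phi_{\beta\alpha}(x_\alpha)+h_{\beta\alpha}(x_\alpha)\,y_\alpha+O(\norm{y_\alpha}^2)$, so its mismatch with the chart-$\beta$ local model is \emph{linear} in $y_\alpha$ with coefficient $h_{\beta\alpha}$; holomorphicity and the volume-form normalization of Lemma \ref{lem:coords_on_X} (which only constrains $\det(\partial z_\beta/\partial z_\alpha)|_D$) give no reason for $h_{\beta\alpha}$ to vanish. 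In fact $\{h_{\beta\alpha}\}$ is a \v{C}ech cocycle representing the extension class of $0\to T_D\to T_{\overline{X}}|_D\to N\to 0$ in $H^1(D,T_D\otimes N^*)\cong H^1(D,T_D)\cong\C^{20}$, which for a fibre of the anticanonical pencil is the Kodaira--Spencer class of the family of $K3$ divisors and is generically nonzero. Since a single diffeomorphism has a single well-defined $1$-jet along $D$, the chart-wise prescriptions $\partial_{y_\alpha}\mapsto\partial_{w_\alpha}$ are mutually inconsistent exactly when this cocycle is nonzero, and no partition-of-unity averaging can satisfy all of them simultaneously.

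What your construction does deliver, for any tubular neighborhood $\Phi$ restricting to the identity on $D$ and inducing the canonical identification on normal bundles, is $w_\alpha\circ\Phi=y_\alpha+O(\norm{y_\alpha}^2)$ --- this prescription is globally consistent because the $w$-transition $w_\beta=g_{\beta\alpha}w_\alpha+O(w_\alpha^2)$ has no tangential contamination at first order --- together with the weaker bound $z_\alpha\circ\Phi=x_\alpha+O(\norm{y_\alpha})=x_\alpha+O(e^{-t})$, which is the form of the estimate recorded in the final equalities of the lemma and the only one used in the subsequent asymptotic analysis. To obtain the stated quadratic bound on the $z$-component you would have to either verify that the extension class above vanishes for the pairs under consideration, or modify the coordinates $z_\alpha$ of Lemma \ref{lem:coords_on_X} by corrections of the form $h_\alpha(z_\alpha)w_\alpha$ so as to normalize the cocycle to zero (possible precisely when the class vanishes). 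Neither step appears in your proposal, so as written it proves the lemma only with $O(e^{-t})$ in place of $O(\norm{y_\alpha}^2)$ in the first line; you should consult \cite{D09} to see how, or whether, the stronger quadratic form is actually established there.
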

Hence $X$ is a cylindrical manifold with the cylindrical parameter $t$
via the diffeomorphism $\Phi$ given in the above lemma.
In particular, when $H^0(\overline{X},\mathcal{O}_{\overline{X}})=0$ and
$N_{D/\overline{X}}$ is trivial, we have a useful coordinate system near $D$.
\begin{lemma}\label{lem:existence_w}
Let $(\overline{X},D)$ be as in Lemma $\ref{lem:coords_on_X}$. If
$H^1(\overline{X},\mathcal{O}_{\overline{X}})=0$ and the normal
bundle $N_{D/\overline{X}}$ is holomorphically trivial, then there
exists an open neighborhood $U_D$ of $D$ and a holomorphic function
$w$ on $U_D$ such that $w$ is a local defining function of $D$ on
$U_D$. Also, we may define the cylindrical parameter $t$ with
$t^{-1}(\R_+)\subset U_D$ by writing the fiber coordinate $y$ of
$N_{D/\overline{X}}$ as $y=\exp (-t-\I\theta )$.
\end{lemma}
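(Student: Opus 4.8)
The plan is to construct $w$ by trivializing the line bundle $L=\mathcal{O}_{\overline{X}}(D)$ in a neighborhood of $D$ and then dividing by that trivialization the tautological section that cuts out $D$. Recall that the effective smooth divisor $D$ determines $L=\mathcal{O}_{\overline{X}}(D)$ together with a holomorphic section $s\in H^0(\overline{X},L)$ whose zero divisor is exactly $D$; smoothness of $D$ guarantees that $s$ vanishes to first order along $D$. By the normal bundle formula $L|_D\cong N_{D/\overline{X}}$, which is holomorphically trivial by hypothesis. So the whole problem reduces to extending the triviality of $L|_D$ to an ambient neighborhood.

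First I would exploit the standard short exact sequence
\begin{equation*}
0\longrightarrow\mathcal{O}_{\overline{X}}\stackrel{\cdot s}{\longrightarrow}L\longrightarrow L|_D\longrightarrow 0,
\end{equation*}
whose associated long exact sequence in cohomology contains
\begin{equation*}
H^0(\overline{X},L)\longrightarrow H^0(D,L|_D)\longrightarrow H^1(\overline{X},\mathcal{O}_{\overline{X}}).
\end{equation*}
The hypothesis $H^1(\overline{X},\mathcal{O}_{\overline{X}})=0$ then forces the restriction map $H^0(\overline{X},L)\to H^0(D,L|_D)$ to be surjective. This is precisely where the cohomological assumption enters, and I regard it as the crux of the argument; everything else is soft.

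Next, since $L|_D\cong N_{D/\overline{X}}$ is trivial and $D$ is irreducible, hence connected, I choose a nowhere-vanishing holomorphic section $\nu\in H^0(D,L|_D)$ and lift it through the surjection above to some $\tilde\nu\in H^0(\overline{X},L)$ with $\restrict{\tilde\nu}{D}=\nu$. Because $\tilde\nu$ is nonzero along the compact set $D$ and non-vanishing is an open condition, $\tilde\nu$ is nowhere-zero on some open neighborhood $U_D$ of $D$, hence trivializes $L$ over $U_D$. Setting $w=s/\tilde\nu$ yields a holomorphic function on $U_D$ whose divisor is $D\cap U_D$ with multiplicity one; thus $w$ is the desired local defining function of $D$ on $U_D$.

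Finally, for the cylindrical parameter the triviality of $N_{D/\overline{X}}$ lets me globalize the fiber coordinate $y$ on a neighborhood of the zero section of $N$ and equip $N$ with a flat Hermitian metric, so that the definition $t=-\tfrac{1}{2}\log\Norm{\cdot}^2$ becomes $t=-\log|y|$; writing $y=\exp(-t-\I\theta)$ makes $t\to+\infty$ as one approaches $D$, and shrinking $U_D$ (equivalently restricting to large $t$) arranges $t^{-1}(\R_+)\subset U_D$. The asymptotic compatibility $w=y+O(e^{-t})$ is then read off from Lemma \ref{lem:tub.nbd.thm}. The only genuine obstacle is the surjectivity in the second step; the openness of the non-vanishing locus and the choice of flat metric and coordinate are routine.
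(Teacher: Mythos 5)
Your proposal is correct and follows essentially the same route as the paper: the same short exact sequence $0\to\mathcal{O}_{\overline{X}}\to[D]\to\restrict{[D]}{D}\cong N_{D/\overline{X}}\to 0$, the same use of $H^1(\overline{X},\mathcal{O}_{\overline{X}})=0$ to lift a nowhere-vanishing section of the trivial normal bundle to a global section of $[D]$ non-vanishing on a neighborhood $U_D$ of $D$, and the same resulting trivialization yielding the defining function $w$. You merely spell out the final step (dividing the tautological section by the lifted trivializing section) that the paper leaves implicit.
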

\begin{proof}
We deduce from the short exact sequence
\begin{equation*}
\xymatrix@R=-1ex{0\ar[r]&\mathcal{O}_{\overline{X}}\ar[r]&[D]\ar[r]&\restrict{[D]}{D}\ar[r]&0\\
&&
&\veq\mathstrut\;\;\; &\\
&& &N_{D/\overline{X}}\cong\mathcal{O}_D& }
\end{equation*}
the long exact sequence
\begin{equation*}
\xymatrix@R=-1ex{
\cdots\ar[r]&H^0({\overline{X}},[D])\ar@{>>}[r]&H^0(D,N_{D/\overline{X}})\ar[r]
&H^1(\overline{X},\mathcal{O}_{\overline{X}})\ar[r]&\cdots .\\
&&\veq&\veq &\\
&&H^0(D,\mathcal{O}_D)\cong\C &0&
}
\end{equation*}
Thus there exists a holomorphic section $s\in H^0({\overline{X}},[D])$ such that
$\restrict{s}{D}\equiv 1\in H^0(D,N_{D/\overline{X}})$. Setting
$\displaystyle U_D=\Set{x\in {\overline{X}}|s(x)\neq 0}$, we have
$\restrict{[D]}{U_D}\cong\mathcal{O}_{U_D}$, so that there exists a
local defining function $w$ of $D$ on $U_D$.
\end{proof}

\subsection{Admissible pairs and asymptotically cylindrical Ricci-flat K\"{a}hler manifolds}

\begin{definition}\rm
Let $X$ be a cylindrical manifold such that
$\pi :X\setminus X_0\longrightarrow\Sigma\times\R_+=\{(p,t)\}$
is a corresponding diffeomorphism.
If $g_\Sigma$ is a Riemannian metric on $\Sigma$, then it defines a cylindrical metric
$g_{\rm cyl}=g_\Sigma +\der t^2$ on $\Sigma\times\R_+$.
Then a complete Riemannian metric $g$ on $X$ is said to be {\it asymptotically cylindrical}
({\it to }$(\Sigma\times\R_+,g_{\rm cyl})$) if $g$ satisfies
\begin{equation*}
\norm{\nabla_{g_{\rm cyl}}^j(g-g_{\rm cyl})}_{g_{\rm cyl}}\longrightarrow 0
\quad\text{as }t\longrightarrow\infty\quad\text{for all }j\geqslant 0
\end{equation*}
for some cylindrical metric $g_{\rm cyl}=g_\Sigma +\der t^2$,
where we regarded $g_{\rm cyl}$ as a Riemannian metric on $X\setminus X_0$
via the diffeomorphism $\pi$.
Also, we call $(X,g)$ an {\it asymptotically cylindrical manifold} and
$(\Sigma\times\R_+,g_{\rm cyl})$ the {\it asymptotic model} of $(X,g)$.
\end{definition}
\begin{definition}\label{def:admissible}\rm
Let $\overline{X}$ be a compact K\"{a}hler manifold and $D$ a divisor on $\overline{X}$.
Then $(\overline{X},D)$ is said to be an {\it admissible pair} if the following conditions hold:
\begin{enumerate}
\item[(a)] $\overline{X}$ is a compact K\"ahler manifold,
\item[(b)] $D$ is a smooth anticanonical divisor on $\overline{X}$,
\item[(c)] the normal bundle $N_{\overline{X}/ D}$ is trivial, and
\item[(d)] $\overline{X}$ and $X=\overline{X}\setminus D$ are simply-connected.
\end{enumerate}
\end{definition}
From the above conditions, we see that Lemmas \ref{lem:coords_on_X} and
\ref{lem:existence_w} apply to admissible pairs.
Also, from conditions (a) and (b), we see that $D$ is a compact K\"{a}hler manifold
with trivial canonical bundle.
In particular, if $\dim_\C \overline{X}=3$, which case is our main concern, 
then $D$ must be a $K3$ surface (and so cannot be a complex torus). Let us shortly see this.
The short exact sequence $0\longrightarrow K_{\overline{X}}\longrightarrow \mathcal{O}_{\overline{X}}\longrightarrow 
\mathcal{O}_D\longrightarrow 0$
induces the long exact sequence 
\begin{equation*}
\xymatrix@R=-1ex{\cdots\ar[r]&H^1(\overline{X},\mathcal{O}_{\overline{X}})\ar[r]
&H^1(D,\mathcal{O}_D)\ar[r]&H^2(\overline{X},K_{\overline{X}})\ar[r]&\cdots}.
\end{equation*}
Here $H^2(\overline{X},K_{\overline{X}})$ is dual to $H^1(\overline{X},\mathcal{O}_{\overline{X}})$ by the Serre duality
and $H^1(\overline{X},\mathcal{O}_{\overline{X}})\cong H^{0,1}_{\delbar}(\overline{X})$ vanishes from $b^1(\overline{X})=0$.
Thus $H^1(D,\mathcal{O}_D)\cong H^{0,1}_{\delbar}(D)$ also vanishes, so that we have $b^1(D)=0$.
\begin{theorem}[Tian-Yau \cite{TY90}, Kovalev \cite{K03}, Hein \cite{Hn10}]\label{thm:TYKH}
Let $(\overline{X},\omega' )$ be a compact K\"{a}hler manifold
and $m=\dim_\C\overline{X}$. 
If $(\overline{X},D)$ is an admissible pair, then the following is true.

It follows from Lemmas $\ref{lem:coords_on_X}$ and
$\ref{lem:existence_w}$, there exist a local coordinate system
$(U_{D,\alpha} ,(z_\alpha^1,\dots ,z_\alpha^{m-1},w))$ on a
neighborhood $U_D=\cup_\alpha U_{D,\alpha}$ of $D$ and a holomorphic
volume form $\Omega$ on $\overline{X}$ such that
\begin{equation}\label{eq:TYKH_Omega}
\Omega =\frac{\der w}{w}\wedge\der z_\alpha^1\wedge\dots\wedge
\der z_\alpha^{m-1}\quad\text{on }U_{D,\alpha}.
\end{equation}
Let $\kappa_D$ be the unique Ricci-flat K\"{a}hler form on $D$ in the K\"{a}hler class
$[\restrict{\omega'}{D}]$.
Also let $(x_\alpha ,y)$ be local coordinates of $N_{D/\overline{X}}\setminus D$
as in Section $\ref{section:CMWAD}$ and write $y$ as $y=\exp (-t-\I\theta )$.
Now define a holomorphic volume form $\Omega_{\rm cyl}$
and a cylindrical Ricci-flat K\"{a}hler form $\omega_{\rm cyl}$ by
\begin{equation}\label{eq:TYKH_CYcyl}
\begin{aligned}
\Omega_{\rm cyl}&=\frac{\der y}{y}\wedge\der x_\alpha^1\wedge\dots\wedge\der x_\alpha^{m-1}
=(\der t +\I\der\theta)\wedge\Omega_D,\\
\omega_{\rm cyl}&=\kappa_D+\frac{\der y\wedge\der\overline{y}}{\norm{y}^2}
=\kappa_D+\der t\wedge\der\theta .
\end{aligned}
\end{equation}
Then there exists an asymptotically cylindrical Ricci-flat K\"{a}hler form $\omega$ on
$X=\overline{X}\setminus D$ such that
\begin{equation*}
\begin{aligned}
\Omega -\Omega_{\rm cyl}=\der\zeta,\quad
&\omega -\omega_{\rm cyl}=\der\xi\quad\text{for some }\zeta\text{ and }\xi\text{ with}\\
\norm{\nabla_{g_{\rm cyl}}^j\zeta}_{g_{\rm cyl}}=O(e^{-\beta t}),\quad
&\norm{\nabla_{g_{\rm cyl}}^j\xi}_{g_{\rm cyl}}=O(e^{-\beta t})
\quad\text{for all }j\geqslant 0\text{ and }0<\beta <\min\set{1/2,\sqrt{\lambda_1}},
\end{aligned}
\end{equation*}
where $\lambda_1$ is the first eigenvalue of the Laplacian $\Delta_{g_D+\der\theta^2}$
acting on $D\times S^1$ with $g_D$ the metric associated with $\kappa_D$.
\end{theorem}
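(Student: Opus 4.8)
The plan is to combine Yau's solution of the Calabi conjecture on the divisor $D$ with a noncompact complex Monge--Amp\`ere argument on the cylindrical manifold $X=\overline{X}\setminus D$, following the scheme of Tian--Yau and Kovalev and using weighted analysis in the spirit of Hein. The existence of the coordinate system and of the holomorphic volume form $\Omega$ in \eqref{eq:TYKH_Omega} is exactly the content of Lemmas \ref{lem:coords_on_X} and \ref{lem:existence_w}, so I take those for granted. Since $D$ is a compact K\"ahler manifold with trivial canonical bundle (a $K3$ surface when $m=3$), Yau's theorem provides the unique Ricci-flat K\"ahler form $\kappa_D$ in the class $[\restrict{\omega'}{D}]$; together with the flat factor on the cylinder this makes the model $(\omega_{\rm cyl},\Omega_{\rm cyl})$ of \eqref{eq:TYKH_CYcyl} an honest Ricci-flat K\"ahler structure on $D\times S^1\times\R_+$, normalized so that $\omega_{\rm cyl}^m=c_m\,\Omega_{\rm cyl}\wedge\overline{\Omega_{\rm cyl}}$ for a suitable real constant $c_m$ making the right-hand side a positive volume form.

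First I would produce an approximately Ricci-flat reference form. Using the tubular-neighbourhood diffeomorphism $\Phi$ of Lemma \ref{lem:tub.nbd.thm} and the fibre coordinate $y=\exp(-t-\I\theta)$ on the trivial normal bundle, I transport $\omega_{\rm cyl}$ to a neighbourhood of $D$ in $X$ and patch it to $\omega'$ with a cut-off in $t$, obtaining a global K\"ahler form $\omega_0$ on $X$ that coincides with $\omega_{\rm cyl}$ for $t\gg 0$. Define $F\in C^\infty(X)$ by $c_m\,\Omega\wedge\overline{\Omega}=e^{F}\,\omega_0^m$; then the Ricci-flat equation $\omega^m=c_m\,\Omega\wedge\overline{\Omega}$ for $\omega=\omega_0+\I\del\delbar u$ becomes the complex Monge--Amp\`ere equation $(\omega_0+\I\del\delbar u)^m=e^{F}\omega_0^m$, and $\norm{\nabla^j_{g_{\rm cyl}}F}_{g_{\rm cyl}}=O(e^{-t})$ for all $j$, because $(\omega_0,\Omega)$ agrees with $(\omega_{\rm cyl},\Omega_{\rm cyl})$ to leading order by Lemma \ref{lem:tub.nbd.thm} and the model is itself Ricci-flat.

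The analytic heart, and the step I expect to be the main obstacle, is to solve this Monge--Amp\`ere equation on the noncompact manifold $X$ with exponential decay of $u$. I would work in the weighted H\"older spaces $C^{k,\alpha}_\beta(X)$ with norm $\Norm{e^{\beta t}\,\cdot\,}_{C^{k,\alpha}}$. The linearization at the model is the cylindrical Laplacian $\Delta_{g_{\rm cyl}}$, whose indicial weights are $\pm\sqrt{\mu}$ for $\mu$ an eigenvalue of the cross-sectional Laplacian $\Delta_{g_D+\der\theta^2}$ on $D\times S^1$; hence for every $0<\beta<\sqrt{\lambda_1}$ the map $\Delta_{g_{\rm cyl}}\colon C^{2,\alpha}_\beta\to C^{0,\alpha}_\beta$ is Fredholm and injective, with cokernel at most the span of the constant at infinity, which is removed either by a preliminary normalization of $\omega_0$ or by the volume-matching built into the equation. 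With this linear theory I would run a continuity method, the crucial inputs being a uniform $C^0$-estimate and interior Schauder estimates that stay uniform as $t\to\infty$; this uniformity at infinity, rather than the algebra of the iteration, is where the real work lies and is precisely what the cited existence results \cite{TY90,K03,Hn10} supply. The outcome is a smooth $u$ with $\Norm{\nabla^j_{g_{\rm cyl}}u}_{g_{\rm cyl}}=O(e^{-\beta t})$ for every $0<\beta<\min\set{1/2,\sqrt{\lambda_1}}$, the bound $\sqrt{\lambda_1}$ forced by the spectral gap in the linear solve and the bound $1/2$ by the convergence rate of the background data to the model together with the quadratic nature of the Monge--Amp\`ere error.

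Finally I would record the exactness statements. By construction $\omega=\omega_0+\I\del\delbar u$ is Ricci-flat K\"ahler and asymptotically cylindrical to $(\omega_{\rm cyl},g_{\rm cyl})$. Since $\omega_0-\omega_{\rm cyl}$ is closed and supported in $\{t\leqslant T_0\}$, while $\I\del\delbar u=\der(\I\delbar u)$ with $\I\delbar u=O(e^{-\beta t})$, the difference $\omega-\omega_{\rm cyl}$ is a closed exponentially decaying real form whose weighted cohomology class vanishes; as $\beta$ lies below the first exceptional weight $\sqrt{\lambda_1}$, weighted Hodge theory on the cylindrical manifold furnishes a primitive $\xi$ with $\omega-\omega_{\rm cyl}=\der\xi$ of the stated decay. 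The same argument applies to $\Omega-\Omega_{\rm cyl}$, which is closed and exponentially decaying and whose asymptotic limit vanishes because $\Omega$ and $\Omega_{\rm cyl}$ share the Poincar\'e residue $\Omega_D$, yielding a primitive $\zeta=O(e^{-\beta t})$. Together with the estimates on $u$ this gives the decompositions and bounds asserted in the theorem.
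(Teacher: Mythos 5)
The paper offers no proof of this theorem: it is imported wholesale from the cited references \cite{TY90}, \cite{K03}, \cite{Hn10}, and your sketch (patching $\omega_{\rm cyl}$ to a global background $\omega_0$, solving the complex Monge--Amp\`ere equation in weighted H\"older spaces on the cylindrical end, reading off the decay rate from the spectral gap $\sqrt{\lambda_1}$ and the convergence rate of the background, then producing the primitives $\xi,\zeta$ by weighted Hodge theory) is a faithful outline of exactly the strategy those references carry out. Since the genuinely hard steps --- the uniform $C^0$ and Schauder estimates at infinity --- are, as you yourself note, precisely what the cited existence results supply, your proposal is consistent with the paper's (purely referential) treatment and there is nothing further to compare.
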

A pair $(\Omega ,\omega )$ consisting of a holomorphic volume form $\Omega$ and a Ricci-flat K\"{a}hler form $\omega$
on an $m$-dimensional K\"{a}hler manifold
normalized so that
\begin{equation*}
\frac{\omega^m}{m!}=\frac{(\I)^{m^2}}{2^m}\Omega\wedge\overline{\Omega}\;(=\text{the volume form})
\end{equation*}
is called a {\it Calabi-Yau structure}.
The above theorem states that there exists a Calabi-Yau structure $(\Omega ,\omega)$
on $X$ asymptotic to a cylindrical Calabi-Yau structure
$(\Omega_{\rm cyl},\omega_{\rm cyl})$ on $N_{D/\overline{X}}\setminus D$
if we multiply $\Omega$ by some constant.

\subsection{Gluing admissible pairs}\label{sec:gluing}
Hereafter we will only consider admissible pairs $(\overline{X},D)$ with
$\dim_\C\overline{X}=3$. Also, we will denote $N=N_{D/\overline{X}}$ and $X=\overline{X}\setminus D$.

\subsubsection{The gluing condition}

Let $(\overline{X},\omega')$ be a $3$-dimensional compact K\"{a}hler manifold
and $(\overline{X},D)$ be an admissible pair.
We first define a natural torsion-free $G_2$-structure on $X\times S^1$.

It follows from Theorem \ref{thm:TYKH} that there exists a
Calabi-Yau structure $(\Omega ,\omega)$ on $X$ asymptotic to a cylindrical
Calabi-Yau structure $(\Omega_{\rm cyl},\omega_{\rm cyl})$
on $N\setminus D$, which are written as \eqref{eq:TYKH_Omega}
and \eqref{eq:TYKH_CYcyl}.
We define a $G_2$-structure $\varphi$ on $X\times S^1$ by
\begin{equation}\label{eq:phi}
\varphi =\omega\wedge\der\theta' +\Image\Omega ,
\end{equation}
where $\theta'\in \R /2\pi\Z$ is a coordinate on $S^1$.
Similarly, we define a $G_2$-structure $\varphi_{\rm cyl}$
on $(N\setminus D)\times S^1$ by
\begin{equation}\label{eq:phi_cyl}
\varphi_{\rm cyl} =\omega_{\rm cyl}\wedge\der\theta' +\Image\Omega_{\rm cyl}.
\end{equation}
The Hodge duals of $\varphi$ and $\varphi_{\rm cyl}$ are computed as
\begin{equation}\label{eq:Hodge_dual_phi}
\begin{aligned}
*_{g_\varphi}\varphi &=\frac{1}{2}\omega\wedge\omega -\Real\Omega\wedge\der\theta' ,\\
*_{g_{\varphi_{\rm cyl}}}\varphi_{\rm cyl} &=\frac{1}{2}
\omega_{\rm cyl}\wedge\omega_{\rm cyl} -\Real\Omega_{\rm cyl}\wedge\der\theta' .
\end{aligned}
\end{equation}
Then we see easily from Theorem \ref{thm:TYKH} and
equations \eqref{eq:phi}--\eqref{eq:Hodge_dual_phi} that
\begin{equation}\label{eq:difference}
\begin{aligned}
\varphi -\varphi_{\rm cyl}&=\der\xi\wedge\der\theta' +\Image\der\zeta
=\der\eta_1 ,\\
*_{g_\varphi}\varphi -*_{g_{\varphi_{\rm cyl}}}\varphi_{\rm cyl}
&=(\omega +\omega_{\rm cyl})\wedge\der\xi -\Real\der\zeta\wedge\der\theta'
=\der\eta_2,\\
\text{where}\quad\eta_1 &=\xi\wedge\der\theta'+\Image\zeta ,
\quad\eta_2=(\omega +\omega_{\rm cyl})\wedge\xi -\Real\zeta\wedge\der\theta' .
\end{aligned}
\end{equation}
Thus $\varphi$ and $\varphi_{\rm cyl}$ are both torsion-free $G_2$-structures,
and $(X\times S^1, \varphi )$ is asymptotic to
$((N\setminus D)\times S^1,\varphi_{\rm cyl})$.
Note that the cylindrical end of $X\times S^1$ is diffeomorphic to
$(N\setminus D)\times S^1\simeq D\times S^1\times S^1\times\R_+
=\{(x_\alpha ,\theta ,\theta' ,t)\}$.

Next we consider the condition under which we can glue together
$X_1$ and $X_2$ obtained from admissible pairs
$(\overline{X}_1,D_1)$ and $(\overline{X}_2,D_2)$. For gluing $X_1$
and $X_2$ to obtain a manifold with an approximating
$G_2$-structure, we would like $(X_1,\varphi_1 )$ and
$(X_2,\varphi_2)$ to have the same asymptotic model. Thus we put the
following
\begin{description}
\item[\it Gluing condition] There exists a diffeomorphism
$F: D_1\times S^1\times S^1\longrightarrow D_2\times S^1\times S^1$
between the cross-sections of the cylindrical ends such that
\begin{equation}\label{eq:gluing_condition}
F_T^*\varphi_{2,\rm cyl} =\varphi_{1,\rm cyl}\quad\text{for all }T>0,
\end{equation}
where $F_T:D_1\times S^1\times S^1\times (0,2T)\longrightarrow D_2\times S^1\times S^1\times (0,2T)$
is defined by
\begin{equation*}
F_T(x_1,\theta_1, \theta'_1, t)=(F( x_1,\theta_1, \theta'_1 ),2T-t)\quad\text{for }
(x_1,\theta_1, \theta'_1, t)\in D_1\times S^1\times S^1\times (0,2T) .
\end{equation*}
\end{description}
\begin{lemma}\label{lem:gluing_condition}
Suppose that
there exists an isomorphism $f:D_1\longrightarrow D_2$ such that $f^*\kappa_{D_2}=\kappa_{D_1}$.
If we define a diffeomorphism $F$ between the cross-sections of the cylindrical ends by
\begin{equation*}
\xymatrix@R=-1ex{
F_T:D_1\times S^1\times S^1\ar[r]&D_2\times S^1\times S^1.\\
\din &\din\\
(x_1,\theta_1 ,\theta'_1 )\ar@{|->}[r]&(x_2,\theta_2 ,\theta'_2 )
=(f(x_1),-\theta_1 ,\theta'_1 )
}
\end{equation*}
Then the gluing condition \eqref{eq:gluing_condition} holds,
where we change the sign of $\Omega_{2,\rm cyl}$
(and also the sign of $\Omega_2$ correspondingly).
\end{lemma}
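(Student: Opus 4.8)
The plan is to verify the gluing condition \eqref{eq:gluing_condition} by computing the pullback $F_T^*\varphi_{2,\mathrm{cyl}}$ directly, term by term, from the explicit expressions in \eqref{eq:TYKH_CYcyl} together with the hypothesis $f^*\kappa_{D_2}=\kappa_{D_1}$. First I would record how $F_T$ acts on the basic one-forms of the cross-section. Writing the source coordinates as $(x_1,\theta_1,\theta_1',t)$ and the target cylindrical parameter as $t_2$, the formula $F_T(x_1,\theta_1,\theta_1',t)=(f(x_1),-\theta_1,\theta_1',2T-t)$ gives
\[
F_T^*\der t_2=-\der t,\qquad F_T^*\der\theta_2=-\der\theta_1,\qquad F_T^*\der\theta_2'=\der\theta_1',
\]
while on the $K3$ factor $F_T$ acts through $f$. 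By assumption $f^*\kappa_{D_2}=\kappa_{D_1}$, and since $f$ is a biholomorphism of $K3$ surfaces it carries the one-dimensional space of holomorphic volume forms by a constant, $f^*\Omega_{D_2}=c\,\Omega_{D_1}$. The Calabi-Yau normalization $2\kappa_D^2=\Omega_D\wedge\overline{\Omega}_D$ combined with $f^*\kappa_{D_2}=\kappa_{D_1}$ forces $\norm{c}=1$, and because $\Omega\wedge\overline{\Omega}$ is invariant under rephasing $\Omega_2$ we may rotate the phase of $\Omega_2$ to arrange $c=1$; in the doubling case $f=\mathrm{id}$ this holds automatically.

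Next I would assemble the two summands of $\varphi_{2,\mathrm{cyl}}$. For the K\"ahler piece the two sign reversals cancel,
\[
F_T^*\omega_{2,\mathrm{cyl}}=f^*\kappa_{D_2}+(F_T^*\der t_2)\wedge(F_T^*\der\theta_2)=\kappa_{D_1}+(-\der t)\wedge(-\der\theta_1)=\omega_{1,\mathrm{cyl}},
\]
so that $F_T^*(\omega_{2,\mathrm{cyl}}\wedge\der\theta_2')=\omega_{1,\mathrm{cyl}}\wedge\der\theta_1'$. For the holomorphic volume form the factor $\der t_2+\I\,\der\theta_2$ pulls back to $-(\der t+\I\,\der\theta_1)$, producing a single overall minus sign:
\[
F_T^*\Omega_{2,\mathrm{cyl}}=-(\der t+\I\,\der\theta_1)\wedge f^*\Omega_{D_2}=-\,\Omega_{1,\mathrm{cyl}}.
\]
This is exactly the sign that the statement removes by replacing $\Omega_{2,\mathrm{cyl}}$ with $-\Omega_{2,\mathrm{cyl}}$ (equivalently, changing the sign of $\Omega_2$). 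Since $F_T$ is a real map, pullback commutes with taking imaginary parts, so after this sign change $F_T^*\Image\Omega_{2,\mathrm{cyl}}=\Image\Omega_{1,\mathrm{cyl}}$.

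Combining the two computations yields
\[
F_T^*\varphi_{2,\mathrm{cyl}}=\omega_{1,\mathrm{cyl}}\wedge\der\theta_1'+\Image\Omega_{1,\mathrm{cyl}}=\varphi_{1,\mathrm{cyl}},
\]
which is the asserted gluing condition. I would close by observing that the identity holds simultaneously for every $T>0$: the parameter $T$ enters $F_T$ only through $t_2=2T-t$, yet $F_T^*\der t_2=-\der t$ is independent of $T$, and none of $\kappa_D$, $\Omega_D$, $\der\theta$, $\der\theta'$ involves $t$, so nothing in the computation depends on $T$. I expect the only delicate point to be the bookkeeping of signs---seeing that the reflection $\theta_1\mapsto-\theta_1$ is what keeps $\omega_{\mathrm{cyl}}$ invariant while simultaneously forcing the minus sign on $\Omega_{\mathrm{cyl}}$ that the prescribed sign change of $\Omega_2$ is designed to absorb---together with pinning down the constant $c$, which the normalization and the freedom to rephase $\Omega_2$ reduce to the harmless value $c=1$.
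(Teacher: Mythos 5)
Your computation is correct and is exactly the ``straightforward calculation using \eqref{eq:TYKH_CYcyl} and \eqref{eq:phi_cyl}'' that the paper's one-line proof leaves to the reader: the pullbacks $F_T^*\der t_2=-\der t$, $F_T^*\der\theta_2=-\der\theta_1$, $F_T^*\der\theta_2'=\der\theta_1'$ make the two sign reversals cancel in $\omega_{2,\rm cyl}$ and produce the single minus sign on $\Omega_{2,\rm cyl}$ that the prescribed sign change absorbs. Your extra care in pinning down the constant $c$ in $f^*\Omega_{D_2}=c\,\Omega_{D_1}$ via the Calabi--Yau normalization (trivially $c=1$ in the doubling case $f=\id$) is a detail the paper glosses over, but it does not change the route of the argument.
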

\begin{proof}
It follows by a straightforward calculation
using \eqref{eq:TYKH_CYcyl} and \eqref{eq:phi_cyl}.
\end{proof}

\begin{remark}\label{rem:matching_condition}\rm
In the constructions of compact $G_2$-manifolds by Kovalev \cite{K03} and Kovalev-Lee \cite{KL11},
the map $F:D_1\times S^1\times S^1\longrightarrow D_2\times S^1\times S^1$ is defined by
\begin{equation*}
F(x_1,\theta_1 ,\theta'_1 )=(x_2,\theta_2 ,\theta'_2)=(f(x_1),\theta'_1 ,\theta_1)
\quad\text{for }(x_1,\theta_1 ,\theta'_1 )\in D_1\times S^1\times S^1,
\end{equation*}
so that $F$ {\it twists} the two $S^1$ factors.
Then in order for the gluing condition \eqref{eq:gluing_condition} to hold,
the isomorphism $f:D_1\longrightarrow D_2$ between $K3$ surfaces must satisfy
\begin{equation*}
f^*\kappa_2^I=-\kappa_1^J,\quad f^*\kappa_2^J=\kappa_1^I,\quad f^*\kappa_2^K=\kappa_1^K,
\end{equation*}
where $\kappa_i^I,\kappa_i^J,\kappa_i^K$ are defined by
\begin{equation*}
\kappa_{D_i}=\kappa_i^I ,\quad\Omega_{D_i}=\kappa_i^J +\I\kappa_i^K.
\end{equation*}
Instead, Kovalev and Lee put a weaker condition
(which they call the {\it matching condition})
\begin{equation*}
f^*[\kappa_2^I]=-[\kappa_1^J ],\quad f^*[\kappa_2^J ]=[\kappa_1^I ],
\quad f^*[\kappa_2^K ]=[\kappa_1^K] ,
\end{equation*}
which is sufficient for the existence of $f$
by the global Torelli theorem of $K3$ surfaces.
Following Kovalev's argument in \cite{K03}, we can weaken the condition
$f^*\kappa_2=\kappa_1$ in Lemma \ref{lem:gluing_condition} to $f^*[\kappa_2]=[\kappa_1]$.
\end{remark}
\subsubsection{Approximating $G_2$-structures}\label{section:T-approx}
Now we shall glue $X_1\times S^1$ and $X_2\times S^1$
under the gluing condition \eqref{eq:gluing_condition}.
Let $\rho :\R\longrightarrow [0,1]$ denote a cut-off function
\begin{equation*}
\rho (x)=
\begin{cases}
1&\text{if }x\leqslant 0,\\
0&\text{if }x\geqslant 1,
\end{cases}
\end{equation*}
and define $\rho_T :\R\longrightarrow [0,1]$ by
\begin{equation}\label{rho_T}
\rho_T (x)=\rho (x-T+1)=
\begin{cases}
1&\text{if }x\leqslant T-1,\\
0&\text{if }x\geqslant T.
\end{cases}
\end{equation}
Setting an approximating Calabi-Yau structure $(\Omega_{i,T}, \omega_{i,T})$ by
\begin{equation*}
\Omega_{i,T}=
\begin{cases}
\Omega_i -\der (1-\rho_{T-1})\zeta_i &\text{on }\{ t\leqslant T-1\} ,\\
\Omega_{i,\rm cyl}+\der\rho_{T-1}\zeta_i &\text{on }\{ t\geqslant T-2\}
\end{cases}
\end{equation*}
and similarly
\begin{equation*}
\omega_{i,T}=
\begin{cases}
\omega_i -\der (1-\rho_{T-1})\xi_i &\text{on }\{ t\leqslant T-1\} ,\\
\omega_{i,\rm cyl}+\der\rho_{T-1}\xi_i &\text{on }\{ t\geqslant T-2\},
\end{cases}
\end{equation*}
we can define a $\der$-closed (but not necessarily $\der^*$-closed) $G_2$-structure
$\varphi_{i,T}$ on each $X_i\times S^1$ by
\begin{equation*}
\varphi_{i,T}=\omega_{i,T}\wedge\der\theta'_i +\Image\Omega_T .
\end{equation*}
Note that $\varphi_{i,T}$ satisfies
\begin{equation*}
\varphi_{i,T}=
\begin{cases}
\varphi_i&\text{on }\{ t<T-2\} ,\\
\varphi_{i,\rm cyl}&\text{on }\{ t>T-1\}
\end{cases}
\end{equation*}
and that
\begin{equation}\label{eq:phi_T-phi_cyl}
\norm{\varphi_{i,T} -\varphi_{i,\rm cyl}}_{g_{\varphi_{i,\rm cyl}}}=O(e^{-\beta T})
\quad\text{for all }0<\beta <\min\set{1/2,\sqrt{\lambda_1}}.
\end{equation}

Let $X_{1,T}=\{ t_1<T+1\}\subset X_1$ and $X_{2,T}=\{ t_2<T+1\}\subset X_2$.
We glue $X_{1,T}\times S^1$ and $X_{2,T}\times S^1$
along $D_1\times S^1\times\{ T-1<t_1<T+1\}\times S^1\subset X_{1,T}\times S^1$
and $D_2\times S^1\times\{ T-1<t_2<T+1\}\times S^1\subset X_{2,T}\times S^1$
to construct a compact $7$-manifold $M_T\times S^1$
using the gluing map $F_T$
(more precisely, $\widetilde{F}_T=(\Phi_2,\id_{S^1} )\circ F_T\circ (\Phi_1^{-1},\id_{S^1} )$,
where $\Phi_1$ and $\Phi_2$
are the diffeomorphisms given in Lemma \ref{lem:tub.nbd.thm}).
Also, we can glue together $\varphi_{1,T}$ and $\varphi_{2,T}$ to obtain a $3$-form
$\varphi_T$ on $M_T$.
It follows from Lemma \ref{lem:ep-nbd_2} and \eqref{eq:phi_T-phi_cyl} that there exists $T_*>0$
such that $\varphi_T\in\mathcal{P}^3(M_T\times S^1)$ for all $T$ with $T>T_*$,
so that the Hodge star operator $*=*_{g_{\varphi_T}}$ is well-defined.
Thus we can define a $3$-form $\psi_T$ on $M_T\times S^1$ with
$\der^*\varphi_T=\der^*\psi_T$ by
\begin{equation}\label{eq:psi_T}
*\psi_T=*\varphi_T-\left(\frac{1}{2}\omega_T\wedge\omega_T
-\Real\Omega_T\wedge\der\theta'\right) .
\end{equation}

\begin{proposition}\label{prop:estimates}
There exist constants $A_{p,k,\beta}$
independent of $T$ such that for $\beta\in (0,\set{1/2,\sqrt{\lambda_1}})$ we have
\begin{equation*}
\Norm{\psi_T}_{L^p_k}\leqslant A_{p,k,\beta}\, e^{-\beta T},
\end{equation*}
where all norms are measured using $g_{\varphi_T}$.
\end{proposition}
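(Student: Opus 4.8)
The plan is to show that $\psi_T$ is supported in the two interpolation regions and is of size $O(e^{-\beta T})$ there, together with all its derivatives; since these regions have $T$-independent volume, the $L^p_k$ bound then follows by integration. First I would localize $\psi_T$. By \eqref{eq:psi_T}, $*\psi_T=*_{g_{\varphi_T}}\varphi_T-(\tfrac12\omega_T\wedge\omega_T-\Real\Omega_T\wedge\der\theta')$, with $*=*_{g_{\varphi_T}}$ well-defined for $T>T_*$. On $\{t_i<T-2\}$ the glued data $(\Omega_T,\omega_T)$ coincide with the genuine Calabi-Yau data $(\Omega_i,\omega_i)$, and on the neck $\{t_i>T-1\}$ they coincide with the cylindrical data $(\Omega_{i,\mathrm{cyl}},\omega_{i,\mathrm{cyl}})$ (the two cylindrical ends being matched by \eqref{eq:gluing_condition}). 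In either region $\varphi_T$ is the torsion-free $G_2$-structure of a genuine Calabi-Yau structure, so the Hodge-dual identity \eqref{eq:Hodge_dual_phi} holds exactly and $*\psi_T=0$. Hence $\psi_T$ vanishes off the interpolation regions $\{T-2\le t_i\le T-1\}$, $i=1,2$, and by symmetry it suffices to estimate it on one of them.

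Next I would estimate $\psi_T$ on an interpolation region by comparison with $\varphi_{i,\mathrm{cyl}}$. Writing $a_T=\varphi_T-\varphi_{i,\mathrm{cyl}}$, $b_T=\omega_T-\omega_{i,\mathrm{cyl}}$, $c_T=\Omega_T-\Omega_{i,\mathrm{cyl}}$, the interpolation formulas give $b_T=\der(\rho_{T-1}\xi_i)$ and $c_T=\der(\rho_{T-1}\zeta_i)$ there. Since $\rho_{T-1}$ is a fixed-shape translate of $\rho$, it and its derivatives are bounded independently of $T$, while $\xi_i,\zeta_i$ and all their $g_{\mathrm{cyl}}$-derivatives are $O(e^{-\beta t})$ by Theorem \ref{thm:TYKH}; hence $\norm{\nabla_{g_{\mathrm{cyl}}}^j a_T},\norm{\nabla_{g_{\mathrm{cyl}}}^j b_T},\norm{\nabla_{g_{\mathrm{cyl}}}^j c_T}=O(e^{-\beta T})$ for every $j$ on $\{T-2\le t_i\le T-1\}$, which upgrades \eqref{eq:phi_T-phi_cyl} to all orders. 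Because the map $\varphi\mapsto *_{g_\varphi}\varphi$ of \eqref{eq:G2metric} is smooth on the open set of positive $3$-forms (cf. Lemma \ref{lem:ep-nbd_2}) and $\tfrac12\omega\wedge\omega-\Real\Omega\wedge\der\theta'$ is polynomial in $(\omega,\Omega)$, I would subtract the exact cylindrical identity \eqref{eq:Hodge_dual_phi} and Taylor-expand in $(a_T,b_T,c_T)$; this writes $*\psi_T$ as a finite sum of terms each carrying at least one factor of $a_T,b_T$ or $c_T$. As $\varphi_{i,\mathrm{cyl}}$ is $t$-translation-invariant, the coefficient functions of this expansion and their derivatives are bounded uniformly in $T$, so $\norm{\nabla_{g_{\mathrm{cyl}}}^j(*\psi_T)}_{g_{\mathrm{cyl}}}=O(e^{-\beta T})$, and therefore $\norm{\nabla_{g_{\mathrm{cyl}}}^j\psi_T}_{g_{\mathrm{cyl}}}=O(e^{-\beta T})$, for all $j$.

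Finally I would pass to the stated $L^p_k$ norms. For the cylindrical metric the interpolation region $D_i\times S^1\times S^1\times[T-2,T-1]$ is isometric to the fixed region $D_i\times S^1\times S^1\times[-2,-1]$, of $T$-independent volume, so integrating the pointwise bounds of the previous step gives $\Norm{\psi_T}_{L^p_k(g_{\mathrm{cyl}})}\le C(p,k,\beta)\,e^{-\beta T}$. To replace $g_{\mathrm{cyl}}$ by $g_{\varphi_T}$, I would use that $\varphi\mapsto g_\varphi$ is continuous and $\norm{a_T}_{C^0}=O(e^{-\beta T})$, so that $g_{\varphi_T}$ and $g_{\varphi_{i,\mathrm{cyl}}}$, together with their derivatives up to order $k$, are uniformly equivalent on the interpolation region (again by translation-invariance); hence the two families of $L^p_k$ norms are comparable with $T$-independent constants, giving $\Norm{\psi_T}_{L^p_k}\le A_{p,k,\beta}e^{-\beta T}$. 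The hard part will be the uniform-in-$T$ derivative control in the middle step: one must verify that differentiating the nonlinear Hodge-star discrepancy produces no factors growing with $T$, which is exactly where the fixed shape of the cut-off $\rho_{T-1}$ and the translation-invariance of the cylindrical model are used; granting these, the all-orders exponential decay of the Tian--Yau--Kovalev--Hein corrections $\zeta_i,\xi_i$ propagates directly to $\psi_T$.
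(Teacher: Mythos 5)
Your proposal is correct and follows the same route the paper intends: the paper's own ``proof'' is a one-line deferral to Theorem \ref{thm:TYKH}, equation \eqref{eq:difference}, and the arguments of \cite{D09}, Section 3.5, which are precisely the steps you carry out (support of $\psi_T$ in the fixed-width interpolation collars, all-orders $O(e^{-\beta T})$ decay there from the cut-off formulas and the decay of $\xi_i,\zeta_i$, Taylor expansion of the nonlinear Hodge-star discrepancy around the translation-invariant cylindrical structure, and integration over a region of $T$-independent volume with uniform comparability of $g_{\varphi_T}$ and $g_{\rm cyl}$). Your write-up is in effect a fleshed-out version of the details the paper outsources.
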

\begin{proof}
These estimates follow in a straightforward way from Theorem \ref{thm:TYKH}
and equation \eqref{eq:difference}
by arguments similar to those in \cite{D09}, Section 3.5.
\end{proof}

\subsection{Gluing construction of Calabi-Yau threefolds}
Here we give the main theorems for constructing Calabi-Yau threefolds.
\begin{theorem}\label{thm:main}
Let $(\overline{X}_1,\omega'_1)$ and $(\overline{X}_2,\omega'_2)$ be
compact K\"{a}hler manifold with $\dim_\C\overline{X}_i=3$
such that $(\overline{X}_1,D_1)$ and $(\overline{X}_2,D_2)$ are admissible pairs.
Suppose there exists an isomorphism $f: D_1\longrightarrow D_2$ such that
$f^*\kappa_2=\kappa_1$,
where $\kappa_i$ is the unique Ricci-flat K\"{a}hler form on $D_i$ in the K\"{a}hler class
$[\restrict{\omega'_i}{D_i}]$.
Then we can glue toghether $X_1$ and $X_2$ along their cylindrical ends to obtain
a compact manifold $M$. The manifold $M$ is a Calabi-Yau threefold, i.e.,
$b^1(M)=0$ and $M$ admits a Ricci-flat K\"{a}hler metric.
\end{theorem}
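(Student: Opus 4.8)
The plan is to realize $M$ as the six-dimensional factor of a compact $7$-manifold $M_T\times S^1$ carrying a torsion-free $G_2$-structure, and then to force the holonomy down from $G_2$ to $\SU(3)$. First I would verify the gluing condition: since $f^*\kappa_2=\kappa_1$, Lemma \ref{lem:gluing_condition} supplies a diffeomorphism $F$ of the cross-sections with $F_T^*\varphi_{2,\rm cyl}=\varphi_{1,\rm cyl}$, so \eqref{eq:gluing_condition} holds. Using the glued map $\widetilde F_T$ I would glue $X_{1,T}\times S^1$ and $X_{2,T}\times S^1$ along their necks to form $M_T\times S^1$ (hence $M=M_T$ by gluing $X_1$ and $X_2$), and patch $\varphi_{1,T}$ and $\varphi_{2,T}$ into the $\der$-closed $3$-form $\varphi_T$ of Section \ref{section:T-approx}. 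By \eqref{eq:phi_T-phi_cyl} together with Lemma \ref{lem:ep-nbd_2}, there is $T_*>0$ such that $\varphi_T$ is a genuine positive $3$-form for every $T>T_*$, so $g_{\varphi_T}$, its Hodge star, and the $3$-form $\psi_T$ of \eqref{eq:psi_T} are all well defined.

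Next I would invoke the analytic existence theorem for torsion-free $G_2$-structures (Joyce's deformation argument, \cite{J00}). By construction $\der\varphi_T=0$ and $\der^*\varphi_T=\der^*\psi_T$, so the torsion of $\varphi_T$ is governed entirely by $\psi_T$, and Proposition \ref{prop:estimates} gives $\Norm{\psi_T}_{L^p_k}\leqslant A_{p,k,\beta}\,e^{-\beta T}$. For $T$ sufficiently large this torsion is small enough for Joyce's theorem to produce a torsion-free $G_2$-structure $\widetilde\varphi_T$ on $M_T\times S^1$ that is $C^0$-close to $\varphi_T$. By the quoted theorem of Salamon, $g_{\widetilde\varphi_T}$ then satisfies $\Hol(g_{\widetilde\varphi_T})\subseteq G_2$, and in particular $\Ric(g_{\widetilde\varphi_T})=0$.

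To pin down the holonomy I would compute $\pi_1$. Each $X_i$ is simply-connected and $D$ is a $K3$ surface, so the neck $D\times S^1\times(T-1,T+1)$ has fundamental group $\Z$ while $X_{1,T}$ and $X_{2,T}$ are simply-connected; van Kampen then gives $\pi_1(M_T)=0$, whence $b^1(M_T)=0$ and $b^1(M_T\times S^1)=1$. Since $g_{\widetilde\varphi_T}$ is compact and Ricci-flat, Bochner's theorem upgrades the nonzero harmonic $1$-form to a parallel one, so its dual is a parallel vector field and the universal cover splits off an $\R$-factor isometrically. By the de Rham decomposition together with the Berger--Simons classification, the complementary $6$-dimensional holonomy lies in $G_2\cap\SO(6)=\SU(3)$.

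The hard part will be the final descent: translating this holonomy reduction on $M_T\times S^1$ into a genuine Ricci-flat K\"ahler (hence Calabi-Yau) structure on $M=M_T$ itself, as required by the statement. Joyce's deformation need not a priori respect the $S^1$-product, so one must either carry out the deformation $S^1$-equivariantly, using that $\varphi_T$, $\psi_T$ and the relevant linearized operator are invariant under rotation of the $\theta'$-circle, or else argue directly that the parallel vector field is tangent to the $S^1$-factor, so that $\widetilde\varphi_T$ descends to a torsion-free $\SU(3)$-structure on $M_T$. This descent is precisely the point requiring care (cf. \cite{MA13}); once it is secured, the parallel $\SU(3)$-structure furnishes the complex structure together with a Ricci-flat K\"ahler form on $M$, which, with $b^1(M)=0$ from the $\pi_1$ computation above, completes the proof.
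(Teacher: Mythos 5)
Your proposal tracks the paper's argument through the gluing, the application of Joyce's existence theorem (Theorem \ref{thm:Joyce}) via the estimates of Proposition \ref{prop:estimates} and Theorem \ref{thm:Kovalev}, the simple-connectivity of $M_T$, and the holonomy reduction to $\SU(3)$ on a six-dimensional factor of the universal cover. But the step you yourself flag as ``the hard part'' --- passing from a torsion-free $G_2$-structure on $M_T\times S^1$ to a Ricci-flat K\"ahler metric on $M_T$ itself --- is left unresolved: you name two candidate strategies ($S^1$-equivariant deformation, or showing the parallel vector field is tangent to the circle factor) without carrying either out. This is a genuine gap, and in fact it is precisely the gap the referee found in an earlier version of this paper (see the acknowledgements). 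Neither of your strategies is immediate: the parallel $1$-form produced by Bochner need not be dual to $\partial/\partial\theta'$, since $g_{\widetilde{\varphi}_T}$ is not a product metric a priori; and even granting $S^1$-invariance of $\widetilde{\varphi}_T$ (which would require exploiting the uniqueness clause of Theorem \ref{thm:Kovalev}), one still has to show the quotient inherits an integrable $\SU(3)$-structure.

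The paper closes the gap by a different, purely topological argument (Lemma \ref{lem:RFonM}). The Cheeger--Gromoll splitting theorem identifies the universal Riemannian cover of $M\times S^1$ with a product $N\times\R^q$, $N$ simply-connected with holonomy in $G_2$; uniqueness of the universal cover gives a diffeomorphism $\phi:M\times\R\to N\times\R^q$, forcing $q=1$, $\dim N=6$, and $\Hol(N)\subseteq\SO(6)\cap G_2=\SU(3)$, so $N$ is Ricci-flat K\"ahler. The crucial extra step is to prove that $N$ is diffeomorphic to $M$: by the Wall--Jupp--Zhubr classification of closed simply-connected $6$-manifolds \cite{MA13}, it suffices to produce a cohomology ring isomorphism $H^*(N)\cong H^*(M)$ preserving $w_2$ and $p_1$, and this is furnished by $\phi^*$ through $H^*(N)\cong H^*(N\times\R)\cong H^*(M\times\R)\cong H^*(M)$. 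Without this step (or a completed version of one of your two alternatives), the argument only produces a Ricci-flat K\"ahler $6$-manifold $N$ with $N\times\R$ diffeomorphic to $M\times\R$, not a Calabi--Yau structure on $M$ itself.
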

\begin{corollary}\label{cor:doubling}
Let $(\overline{X},D)$ be an admissible pair with $\dim_\C\overline{X}=3$.
Then we can glue two copies of $X$ along their cylindrical ends to obtain a compact manifold $M$.
The manifold $M$ is a Calabi-Yau threefold.
\end{corollary}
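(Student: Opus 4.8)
The plan is to obtain the corollary as the diagonal special case of Theorem \ref{thm:main}, in which the two admissible pairs are taken to be one and the same. First I would put $(\overline{X}_1,\omega'_1)=(\overline{X}_2,\omega'_2)=(\overline{X},\omega')$, so that $D_1=D_2=D$ and $X_1=X_2=X$, and I would fix on $D$ the unique Ricci-flat K\"ahler form $\kappa_D$ in the class $[\restrict{\omega'}{D}]$ supplied by Theorem \ref{thm:TYKH}. The entire content of the corollary then reduces to checking the single hypothesis of Theorem \ref{thm:main}: the existence of an isomorphism $f:D_1\longrightarrow D_2$ with $f^*\kappa_2=\kappa_1$.

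The key observation is that for identical pairs this hypothesis is satisfied trivially by the identity map $f=\id_D$, since $\kappa_1=\kappa_2=\kappa_D$ and hence $f^*\kappa_2=\id_D^*\kappa_D=\kappa_D=\kappa_1$. Feeding $f=\id_D$ into Lemma \ref{lem:gluing_condition} produces the gluing diffeomorphism $F(x_1,\theta_1,\theta'_1)=(x_1,-\theta_1,\theta'_1)$ of the cross-sections $D\times S^1\times S^1$, for which the gluing condition \eqref{eq:gluing_condition} holds once the sign of $\Omega_{2,\rm cyl}$ (and correspondingly of $\Omega_2$) on the second copy is reversed. In other words, no matching condition has to be imposed or verified: for the doubling the gluing condition holds automatically, which is precisely the simplification advertised in the introduction.

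With the gluing condition in hand, the rest is a verbatim application of Theorem \ref{thm:main}: gluing $X_1\times S^1$ and $X_2\times S^1$ along $D\times S^1\times(T-1,T+1)\times S^1$ via $\widetilde{F}_T$ produces $M_T\times S^1$ carrying the $\der$-closed $G_2$-structure $\varphi_T$, and the estimates of Proposition \ref{prop:estimates} feed the deformation argument behind that theorem to render $\varphi_T$ torsion-free for all large $T$. For the doubling, simple-connectedness of $M=M_T$ is especially transparent: each half $X_i$ is simply-connected and the two are glued along $D\times S^1$, so van Kampen gives $\pi_1(M)=1$; hence $b^1(M)=0$, the holonomy of the resulting Ricci-flat metric reduces to $\SU(3)$, and $M$ is a Calabi-Yau threefold.

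Since all of the hard analysis is already absorbed into Theorem \ref{thm:main}, I do not expect any analytic obstacle here. The one point that genuinely demands care is the orientation and sign bookkeeping: I must confirm that the reversal $\theta_1\mapsto-\theta_1$ on the $S^1$ fiber of $N\setminus D$, together with the correlated sign change of $\Omega_2$, is compatible with the orientations used in defining $\varphi$ in \eqref{eq:phi} and with the complex structure on each half, so that the resulting torsion-free $G_2$-structure really descends from a genuine Calabi-Yau structure on $M_T$ rather than from a bare $G_2$-metric. Everything else is an immediate specialization of the two-pair theorem.
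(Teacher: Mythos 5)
Your proposal is correct and matches the paper's intent exactly: the corollary is obtained as the diagonal case of Theorem \ref{thm:main}, where the hypothesis $f^*\kappa_2=\kappa_1$ is satisfied trivially by $f=\id_D$ (the paper gives no separate proof precisely because this is immediate). Your additional remarks on the sign of $\Omega_2$ and on simple-connectedness via van Kampen agree with Lemma \ref{lem:gluing_condition} and the proof of Lemma \ref{lem:RFonM}, respectively.
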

\begin{remark}\rm
As stated in Remark \ref{rem:matching_condition}, 
the condition $f^*\kappa_2=\kappa_1$ in Theorem \ref{thm:main}
can be weakened to $f^*[\kappa_2]=[\kappa_1]$ using Kovalev's argument in \cite{K03}.
But we don't go into details here because we don't need the weaker condition 
for getting Corollary \ref{cor:doubling} from Theorem \ref{thm:main}.
\end{remark}

\begin{proof}[Proof of Theorem $\ref{thm:main}$]
We shall prove the existence of a torsion-free $G_2$-structure on $M_T\times S^1$
constructed in Section \ref{sec:gluing} for sufficiently large $T$. Then $M=M_T$ will be
the desired Calabi-Yau threefold according to the following

\begin{lemma}\label{lem:RFonM}
If $M\times S^1$ admits a torsion-free $G_2$-structure,
then $M$ admits a Ricci-flat K\"{a}hler metric.
\end{lemma}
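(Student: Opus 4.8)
The plan is to show that a torsion-free $G_2$-structure on the product $M\times S^1$ forces the $S^1$-factor to split off isometrically, so that the induced holonomy reduction on $M$ is exactly a Calabi-Yau structure. The key conceptual point is that the flat $S^1$-direction is a parallel vector field for the $G_2$-metric, and contracting the parallel $3$-form $\varphi$ with this direction produces parallel forms on $M$ whose algebra is precisely that of an $\SU(3)$-structure.

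\begin{proof}[Proof of Lemma \ref{lem:RFonM}]
First I would fix the coordinate $\theta'$ on the $S^1$-factor and let $\partial_{\theta'}$ be the corresponding coordinate vector field. Since $\varphi$ is a torsion-free $G_2$-structure, by Salamon's theorem $\nabla_{g_\varphi}\varphi=0$, so $g_\varphi$ has holonomy contained in $G_2$ and $\varphi$ is a parallel $3$-form. The metric on the product $M\times S^1$ that is induced by $\varphi$ need not a priori be a Riemannian product, so the first substantive step is to establish that it is. I would invoke the de Rham-type splitting argument: because $M\times S^1$ is a product with a flat circle factor, the vector field $\partial_{\theta'}$ generates a free isometric $S^1$-action, and one averages $g_\varphi$ over this action to obtain an $S^1$-invariant torsion-free $G_2$-structure in the same cohomology class. (This is the point at which the referee's correction and the reference \cite{MA13} enter: one must argue that the averaged structure is still torsion-free and that $M$ remains the relevant factor.) For the invariant structure, $\partial_{\theta'}$ is Killing and, being of constant length with $M\times S^1$ complete and the metric Ricci-flat, it is in fact parallel.

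With $\partial_{\theta'}$ parallel, the next step is to extract the $\SU(3)$-data on $M$. I would set
\begin{equation*}
\omega=\iota_{\partial_{\theta'}}\varphi,\qquad
\Image\Omega=\restrict{\varphi}{M},\qquad
\Real\Omega=-\iota_{\partial_{\theta'}}*_{g_\varphi}\varphi,
\end{equation*}
reversing the defining relations \eqref{eq:phi} and \eqref{eq:Hodge_dual_phi}. Because both $\varphi$ and $*_{g_\varphi}\varphi$ are parallel (torsion-freeness gives $\der\varphi=\der*_{g_\varphi}\varphi=0$, and parallelism follows) and $\partial_{\theta'}$ is parallel, each contraction is again a parallel form on $M$. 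Hence $\omega$ and $\Omega=\Real\Omega+\I\,\Image\Omega$ are parallel with respect to the induced metric $g$ on $M$. Parallelism of $\omega$ makes $g$ Kähler with Kähler form $\omega$, and parallelism of the complex volume form $\Omega$ forces the Ricci form to vanish, so $g$ is Ricci-flat Kähler on $M$. Equivalently, the holonomy of $g$ is contained in $\SU(3)$.

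The main obstacle is the splitting step, not the linear-algebra extraction. A general torsion-free $G_2$-structure on a manifold that happens to be diffeomorphic to $M\times S^1$ carries no guarantee that the metric respects the product decomposition; one genuinely needs the isometric $S^1$-action and an averaging argument to produce a parallel Killing field, and one must verify that averaging preserves torsion-freeness. This is precisely the gap the referee identified, and I would handle it exactly as indicated by citing the structure theory in \cite{MA13} for Ricci-flat metrics admitting a free isometric circle action. Once $\partial_{\theta'}$ is known to be parallel, the remainder is the purely algebraic correspondence between $G_2$-structures with a parallel direction and $\SU(3)$-structures, together with the standard fact that a Kähler metric with parallel holomorphic volume form is Ricci-flat.
\end{proof}
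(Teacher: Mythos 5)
There is a genuine gap at the step you yourself flag as the main obstacle, and the proposed remedy does not close it. The rotation action on the $S^1$-factor is not a priori by isometries of $g_\varphi$: the hypothesis is only that the underlying manifold is diffeomorphic to $M\times S^1$, so $\partial_{\theta'}$ has no reason to be Killing. Averaging $g_\varphi$ (or $\varphi$) over the non-isometric $S^1$-action produces a new tensor, but there is no mechanism by which the averaged object remains a torsion-free $G_2$-structure or even a Ricci-flat metric --- torsion-freeness is a nonlinear condition and is not preserved under convex combinations of $3$-forms. Moreover, even granting a parallel unit vector field, your identification of its orthogonal complement with the tangent spaces of the slices $M\times\{\mathrm{pt}\}$ is unjustified: since the metric is not assumed to be a product metric, the complementary parallel distribution could have non-compact leaves (an irrational-flow phenomenon), so the $6$-dimensional de Rham factor need not be $M$. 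Finally, the citation is misattributed: \cite{MA13} is the Manifold Atlas entry on the Wall--Jupp--Zhubr classification of closed simply-connected $6$-manifolds, not a structure theory for Ricci-flat metrics with a circle action.

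The paper's proof avoids both difficulties. It applies the Cheeger--Gromoll splitting theorem to the universal Riemannian cover of $M\times S^1$, which is therefore isometric to $N\times\R^q$ with $N$ simply-connected and $\R^q$ flat; comparing with the topological universal cover $M\times\R$ forces $q=1$, and $\Hol(N)\subset \SO(6)\cap G_2=\SU(3)$ gives the Ricci-flat K\"ahler metric on $N$. The remaining issue --- that $N$ need not obviously be $M$ --- is exactly where \cite{MA13} enters: the diffeomorphism $M\times\R\cong N\times\R$ induces a ring isomorphism $H^*(M)\cong H^*(N)$ preserving $w_2$ and $p_1$, and the classification of simply-connected $6$-manifolds then yields $N\cong M$. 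If you want to salvage your contraction argument, you would first need to establish an isometric splitting by some such global theorem rather than by averaging.
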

\begin{proof}
Since both $X_1$ and $X_2$ are simply-connected by Definition
\ref{def:admissible} (d),
the resulting manifold $M=M_T$ is also simply-connected. 
Let us consider a Riemannian metric on $M\times S^1$ with holonomy contained in $G_2$,
which is induced by a torsion-free $G_2$-structure.
Then by the Cheeger-Gromoll splitting theorem (see e.g. Besse \cite{Bs87}, Corollary 6.67), 
the universal Riemannian covering of $M\times S^1$ is isometric to a product 
Riemannian manifold $N\times\R^q$ with holonomy contained in $G_2$ for some $q$,
where $N$ is a simply-connected $(7-q)$-manifold and $\R^q$ has a flat metric.
Meanwhile, the natural map $M\times\R\longrightarrow M\times S^1$ is also the universal covering.
By the uniqueness of the universal covering, we have a diffeomorphism $\phi :M\times\R\longrightarrow N\times\R^q$, 
so that $q=1$ and $N$ is $6$-dimensional.
Since the flat metric on $\R$ does not contribute to the holonomy of $N\times\R$,
$N$ itself has holonomy contained in $G_2$. But the holonomy group of a simply-connected
Riemannian $6$-manifold is at most $\SO (6)$, and so it must be
contained in $\SO (6)\cap G_2 =\SU (3)$. Thus $N$ admits a Ricci-flat K\"{a}hler metric.

Now we shall prove that $N$ is indeed diffeomorphic to $M$.
For this purpose, we use the classification of closed, oriented simply-connected $6$-manifolds
by Wall, Jupp and Zhubr 
(see the website of the Manifold Atlas Project, $6$-manifolds: 1-connected \cite{MA13}
for a good overview which includes further references).
Then we see that $M$ and $N$ are diffeomorphic
if there is an isomorphism between the cohomology rings $H^*(M)$ and $H^*(N)$
preserving the second Stiefel-Whitney classes $w_2$ and the first Pontrjagin classes $p_1$ 
(the rest of the invariants are completely determined by the cohomology rings).
Such a ring isomorphism is induced by the diffeomorphism 
$\phi :M\times\R\longrightarrow N\times\R$ via the composition
\begin{equation*}
H^*(N)\cong H^*(N\times\R )\stackrel{\phi^*}{\cong}H^*(M\times\R )\cong H^*(M).
\end{equation*}
This proves that $N$ is diffeomorphic to $M$, and hence $M$ admits a Ricci-flat K\"{a}hler metric.
\end{proof}
Now it remains to prove the existence of a torsion-free $G_2$-structure on $M_T\times S^1$
for sufficiently large $T$. 
We recall the following result which reduces the existence of a torsion-free $G_2$-structure 
to the sovlability of a nonlinear partial differential equation.
\begin{theorem}[Joyce \cite{J00}, Theorem 10.3.7]\label{thm:Joyce}
Let $\varphi$ be a $G_2$-structure on a comact $7$-manifold $M'$ with $\der\varphi =0$. 
Suppose $\eta$ is a $2$-form on $M'$ with $\Norm{\der\eta}_{C^0}\leqslant \epsilon_1$,
and $\psi$ is a $3$-form on $M'$ with $\der^*\psi =\der^*\varphi$ and $\Norm{\psi}_{C^0}\leqslant\epsilon_1$,
where $\epsilon_1$ is a constant independent of the $7$-manifold $M'$ with $\epsilon_1\leqslant\rho_*$.
Let $\eta$ satisfy the nonlinear elliptic partial differential equation
\begin{equation}\label{G_2-PDE}
(\der\der^* +\der^*\der)\eta =\der^*\left( 1+ \frac{1}{3}\braket{\der\eta ,\varphi}_{g_\varphi}\right)\psi +*\der F(\der\eta ).
\end{equation}
Here $F$ is a smooth function from the closed ball of radius $\epsilon_1$ in $\wedge^3 T^*M'$
to $\wedge^4 T^*M'$ with $F(0)=0$, and if $\chi ,\xi\in C^\infty(\wedge^3T^*M')$ and $\norm{\chi},\norm{\xi}\leqslant\epsilon_1$, 
then we have the quadratic estimates 
\begin{equation*}
\begin{aligned}
\norm{F(\chi )-F(\xi )}&\leqslant\epsilon_2 \norm{\chi -\xi}(\norm{\chi}+\norm{\xi}),\\
\norm{\der (F(\chi )-F(\xi ))}&\leqslant\epsilon_3
\left\{\norm{\chi -\xi}(\norm{\chi}+\norm{\xi})\norm{\der^*\varphi}+\norm{\nabla (\chi -\xi)}(\norm{\chi}+\norm{\xi})
+\norm{\chi -\xi}(\norm{\nabla\chi}+\norm{\nabla\xi})\right\}
\end{aligned}
\end{equation*}
for some constants $\epsilon_2, \epsilon_3$ independent of $M'$.
Then $\widetilde{\varphi}=\varphi +\der\eta$ is a torsion-free $G_2$-structure on $M'$.
\end{theorem}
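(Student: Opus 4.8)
The plan is to deform the closed positive $3$-form $\widetilde\varphi=\varphi+\der\eta$ to a coclosed one and then invoke Salamon's theorem. Write $\widetilde g=g_{\widetilde\varphi}$ for the induced metric and $*_{\widetilde g}$ for its Hodge star. First I would verify that $\widetilde\varphi$ is an honest $G_2$-structure: since $\Norm{\der\eta}_{C^0}\leqslant\epsilon_1\leqslant\rho_*$, Lemma \ref{lem:ep-nbd_2} gives $\widetilde\varphi\in\mathcal{P}^3(M')$, so $\widetilde g$ and $*_{\widetilde g}$ are defined. Moreover $\der\widetilde\varphi=\der\varphi+\der\der\eta=0$, so $\widetilde\varphi$ is automatically closed. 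By the equivalence of conditions (1) and (3) in Salamon's theorem of Section 2, it therefore suffices to prove $\der^*_{\widetilde g}\widetilde\varphi=0$, equivalently $\der\!\left(*_{\widetilde g}\widetilde\varphi\right)=0$.

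The core of the argument is to recognize the PDE \eqref{G_2-PDE} as exactly the coclosed condition $\der(*_{\widetilde g}\widetilde\varphi)=0$. To this end I would expand the nonlinear map $\Theta:\alpha\longmapsto *_{g_\alpha}\alpha$ about $\varphi$. Decomposing a $3$-form $\der\eta=(\der\eta)_1+(\der\eta)_7+(\der\eta)_{27}$ according to the splitting $\wedge^3T^*M'=\wedge^3_1\oplus\wedge^3_7\oplus\wedge^3_{27}$ into $G_2$-irreducible summands, the linearization of $\Theta$ at $\varphi$ acts as multiplication by $\tfrac43,1,-1$ on the three summands respectively (the value $\tfrac43$ on $\wedge^3_1$ reflecting that $*_\varphi\varphi$ scales like $\varphi^{4/3}$ under $\varphi\mapsto(1+t)\varphi$). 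Collecting all terms of order $\geqslant 2$ in $\der\eta$ into a smooth remainder $F$ with $F(0)=0$, one obtains a pointwise identity
\begin{equation*}
*_{\widetilde g}\widetilde\varphi = *\varphi + \tfrac43\,{*}(\der\eta)_1 + {*}(\der\eta)_7 - {*}(\der\eta)_{27} + F(\der\eta),
\end{equation*}
in which the $\wedge^3_1$ (trace) part is proportional to $\langle\der\eta,\varphi\rangle_{g_\varphi}$ and is the source of the scalar factor $1+\tfrac13\langle\der\eta,\varphi\rangle_{g_\varphi}$ multiplying $\psi$ in \eqref{G_2-PDE}. Applying $\der$, using the hypothesis $\der^*\psi=\der^*\varphi$ together with the identity $\der^*=-*\der*$, and finally applying $*$ to pass to a $2$-form equation, the condition $\der(*_{\widetilde g}\widetilde\varphi)=0$ rearranges precisely into \eqref{G_2-PDE}, the full Hodge Laplacian $\der\der^*+\der^*\der$ on the left arising as the leading linear operator. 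Since $\eta$ is assumed to solve \eqref{G_2-PDE}, this gives $\der(*_{\widetilde g}\widetilde\varphi)=0$, hence $\der^*_{\widetilde g}\widetilde\varphi=0$ and $\widetilde\varphi$ is torsion-free.

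One point to record along the way is that the full Laplacian, rather than $\der^*\der$ alone, is used on the left so that the equation is elliptic in $\eta$ (and not merely in $\der\eta$); applying $\der^*$ to \eqref{G_2-PDE} and noting that the right-hand side is coexact, one finds $\Delta(\der^*\eta)=0$, so $\der^*\eta$ is a coexact harmonic $1$-form and hence vanishes. This reconciles \eqref{G_2-PDE} with the literal condition coming from $\der(*_{\widetilde g}\widetilde\varphi)=0$, and shows no gauge ambiguity is introduced.

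I expect the main obstacle to be the algebraic derivation of \eqref{G_2-PDE}: computing the linearization of $\varphi\mapsto *_{g_\varphi}\varphi$ on each $G_2$-representation summand, and verifying that the nonlinear remainder $F$ is well defined and smooth on the ball $\norm{\der\eta}\leqslant\epsilon_1$ (where positivity of $\widetilde\varphi$ is guaranteed) and genuinely of second order, i.e.\ that it satisfies the stated quadratic estimates on $\norm{F(\chi)-F(\xi)}$ and $\norm{\der(F(\chi)-F(\xi))}$. These estimates are exactly what is needed if one further wishes to \emph{produce} a solution $\eta$: one solves \eqref{G_2-PDE} by the iteration $\eta_0=0$, $(\der\der^*+\der^*\der)\eta_{n+1}=\der^*(1+\tfrac13\langle\der\eta_n,\varphi\rangle_{g_\varphi})\psi+*\der F(\der\eta_n)$, inverting the Laplacian on the orthogonal complement of the harmonic forms and using the quadratic estimates to show the map is a contraction preserving $\Norm{\der\eta_n}_{C^0}\leqslant\epsilon_1$, with elliptic regularity upgrading the fixed point to a smooth solution.
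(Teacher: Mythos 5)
The paper does not actually prove this statement: it is imported verbatim from Joyce \cite{J00}, Theorem 10.3.7, and used as a black box, so your attempt has to be measured against Joyce's original argument rather than anything in the text. Your skeleton agrees with Joyce's: positivity of $\widetilde{\varphi}$ from $\epsilon_1\leqslant\rho_*$ and Lemma \ref{lem:ep-nbd_2}, closedness for free, Salamon's theorem reducing torsion-freeness to $\der\bigl(*_{\widetilde{g}}\widetilde{\varphi}\bigr)=0$, the linearization of $\Theta:\alpha\mapsto *_{g_\alpha}\alpha$ acting by $\tfrac43,1,-1$ on $\wedge^3_1\oplus\wedge^3_7\oplus\wedge^3_{27}$, and the observation that applying $\der^*$ to \eqref{G_2-PDE} kills the right-hand side and gives $\Delta(\der^*\eta)=0$, hence $\der^*\eta=0$ (harmonic and coexact). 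All of that is correct and genuinely part of the proof.

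The gap is the sentence asserting that $\der\bigl(*_{\widetilde{g}}\widetilde{\varphi}\bigr)=0$ ``rearranges precisely into \eqref{G_2-PDE}''. With $F$ defined, as in your displayed identity, as the quadratic Taylor remainder of $\Theta$, this is false at first order. Indeed, after $\der^*\eta=0$, the coclosedness condition obtained from your expansion (using $\der{*}\varphi=\der{*}\psi$) has linear part $\der^*\bigl[(\tfrac43\pi_1+\pi_7-\pi_{27})\der\eta\bigr]$, whereas the linear content of \eqref{G_2-PDE} is $\der^*\der\eta-\der^*\psi-\der^*(u\psi)$ with $u=\tfrac13\braket{\der\eta,\varphi}_{g_\varphi}$; since $\tfrac73\pi_1(\der\eta)=u\varphi$, the two conditions differ by the first-order quantity $\der^*\bigl[u(\varphi+\psi)+2\pi_7(\der\eta)\bigr]$, which is not quadratic in $\der\eta$ and does not vanish identically --- e.g.\ on the flat $7$-torus with $\psi=0$ and $\der\eta=f'(x^1)\,\der x^1\wedge\der x^2\wedge\der x^3$ one has $\pi_7(\der\eta)=0$, $u=f'/3$, and $\der^*(u\varphi)\neq0$. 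So the PDE is not merely the coclosedness condition rewritten; one must exploit identities special to the closed setting (for a closed $G_2$-structure the torsion is a single component $\tau\in\wedge^2_{14}$ with $\der{*}\varphi=\tau\wedge\varphi=-{*}\tau\in\wedge^5_{14}$, and likewise for $\widetilde{\varphi}$) to dispose of the residual $\wedge^3_1$- and $\wedge^3_7$-pieces, then define $F$ by the resulting exact identity and verify the two quadratic estimates --- the $\norm{\der^*\varphi}$ weight in the second estimate is a visible trace of this torsion bookkeeping. That is the real content of Joyce's Proposition 10.3.5 and of the proof of Theorem 10.3.7; your proposal flags it as ``the main obstacle'' but does not carry it out, and as written the central equivalence fails. (Your closing paragraph on producing $\eta$ by iteration is also out of scope here: in the paper that existence step is Theorem \ref{thm:Kovalev}, quoted from Kovalev, not part of this statement.)
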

To solve \eqref{G_2-PDE} in our construction, 
we use the following gluing theorem based on the analysis of Kovalev and Singer \cite{KS01}.
\begin{theorem}[Kovalev \cite{K03}, Theorem 5.34]\label{thm:Kovalev}
Let $\varphi=\varphi_T, \psi=\psi_T$ and $M'=M_T\times S^1$ be 
as constructed in Section $\ref{section:T-approx}$, with $\der^*\psi_T=\der^*\varphi_T$ and
the estimates in Proposition $\ref{prop:estimates}$.
Then there exists $T_0>0$ such that the following is true.

For each $T>T_0$, there exists a unique smooth $2$-form $\eta_T$ on $M_T\times S^1$
with $\Norm{\eta_T}_{L^p_2}\leqslant B_{p,\beta}e^{-\beta T}$ and $\Norm{\eta_T}_{C^1}\leqslant C_\beta e^{-\beta T}$ for any 
$\beta\in (0,\max\set{1/2,\sqrt{\lambda_1}})$ such that $\eta =\eta_T$ satisfies equation $\eqref{G_2-PDE}$, 
where $B_{p,\beta}$ and $C_\beta$ are independent of $T$. 
\end{theorem}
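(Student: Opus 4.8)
The plan is to recast the quasilinear equation \eqref{G_2-PDE} as a fixed-point problem for a contraction on a small ball in $L^p_2$ and to invoke the Banach fixed-point theorem, tracking all constants to be independent of $T$. The right-hand side of \eqref{G_2-PDE} is visibly co-exact: the first term lies in the image of $\der^*$, and $*\der F(\der\eta)$ is co-exact as well, since for a $4$-form $\gamma$ one has $*\der\gamma=\mp\der^*(*^{-1}\gamma)$. Hence the right-hand side is $L^2$-orthogonal to the harmonic $2$-forms on $M_T\times S^1$, and I would seek $\eta$ among co-exact $2$-forms. Writing $\Delta=\der\der^*+\der^*\der$ and letting $G$ denote the Green operator, which restricts to a genuine inverse of $\Delta$ on co-exact forms, application of $G$ to \eqref{G_2-PDE} gives the equivalent fixed-point equation
\begin{equation*}
\eta = \mathcal{F}_T(\eta):=G\!\left(\der^*\Big[\big(1+\tfrac13\braket{\der\eta,\varphi_T}\big)\psi_T\Big]+*\der F(\der\eta)\right),
\end{equation*}
whose fixed points are exactly the co-exact solutions of \eqref{G_2-PDE}.

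First I would establish the single analytic ingredient on which everything rests: a uniform elliptic estimate
\begin{equation*}
\Norm{Gu}_{L^p_{k+2}}\le C_{p,k}\,\Norm{u}_{L^p_k}
\end{equation*}
for co-exact $u$, with $C_{p,k}$ \emph{independent of $T$}. This is the heart of the matter, and it is exactly the content of the Kovalev--Singer neck analysis underlying Kovalev's Theorem $5.34$. The difficulty is that as $T\to\infty$ the cylindrical neck $D\times S^1\times S^1\times (T-1,T+1)$ lengthens, so the bottom of the spectrum of $\Delta$ on $2$-forms can approach $0$ and naively $\Norm{G}$ could diverge. One controls this by splitting a form into its neck and bulk parts, matching the translation-invariant modes (harmonic on the cross-section $D\times S^1\times S^1$) coming from $X_1$ and $X_2$, and using the spectral gap encoded in $\lambda_1$ together with $b^1(M_T)=0$ to rule out an approximate kernel. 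This is precisely where the restriction $\beta<\sqrt{\lambda_1}$ enters: it guarantees that the relevant modes decay exponentially along the neck, so that no small eigenvalue obstructs the inversion and the constant $C_{p,k}$ stays bounded.

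Granting this estimate, I would run the contraction argument. Fix $p>7$ so that $L^p_2\hookrightarrow C^1$, and work on the ball $\mathcal{B}=\Set{\eta \text{ co-exact} | \Norm{\eta}_{L^p_2}\le K e^{-\beta T}}$. To see $\mathcal{F}_T(\mathcal{B})\subset\mathcal{B}$, combine the uniform bound on $G$ with: (i) Proposition \ref{prop:estimates}, which gives $\Norm{\psi_T}_{L^p_1}=O(e^{-\beta T})$, so the affine term $G\der^*\psi_T$ is $O(e^{-\beta T})$ and sets the scale of $K$; and (ii) the quadratic estimates of Theorem \ref{thm:Joyce} applied to $F$ and the bilinear smallness of $\braket{\der\eta,\varphi_T}\psi_T$, which show that the nonlinear contributions are $O(e^{-2\beta T})$ and hence negligible against $Ke^{-\beta T}$ once $T$ is large. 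The same quadratic estimates give, for $\eta,\eta'\in\mathcal{B}$,
\begin{equation*}
\Norm{\mathcal{F}_T(\eta)-\mathcal{F}_T(\eta')}_{L^p_2}\le C\,e^{-\beta T}\,\Norm{\eta-\eta'}_{L^p_2},
\end{equation*}
so $\mathcal{F}_T$ is a contraction for $T>T_0$. The Banach fixed-point theorem then produces a unique $\eta_T\in\mathcal{B}$ solving \eqref{G_2-PDE}; elliptic regularity promotes $\eta_T$ to a smooth form, and $L^p_2\hookrightarrow C^1$ upgrades the bound to $\Norm{\eta_T}_{C^1}\le C_\beta e^{-\beta T}$, while uniqueness within the co-exact gauge and the stated smallness gives the asserted uniqueness.

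The main obstacle is squarely the uniform invertibility of $\Delta$ in the second paragraph. The contraction scheme and the bookkeeping of exponential factors are routine once the Green operator is bounded independently of $T$, but proving that bound demands the careful separation of neck modes and the spectral input $\lambda_1$; it is exactly here that the construction would fail if an approximate kernel of $\Delta$ persisted as $T\to\infty$.
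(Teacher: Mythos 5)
Your proposal reconstructs the actual analytic proof, whereas the paper does not give one: its ``proof'' of Theorem \ref{thm:Kovalev} is a citation, observing that Kovalev proves the statement in \cite{K03} under the hypothesis $d(\overline{X}_1)=0$ or $d(\overline{X}_2)=0$ (which holds for Fano-type pairs but not for non-symplectic ones), and that the remaining cases follow from Proposition 4.2 of Kovalev--Singer \cite{KS01}. Your scheme --- rewrite \eqref{G_2-PDE} as a fixed-point equation for the Green operator on co-exact $2$-forms, prove a $T$-uniform linear estimate by neck analysis, then run a contraction on a ball of radius $O(e^{-\beta T})$ using Proposition \ref{prop:estimates} and the quadratic estimates of Theorem \ref{thm:Joyce} --- is exactly the strategy of Kovalev's Theorem 5.34, so as an outline it is the ``right'' proof; what it buys over the paper's treatment is an actual argument, and what the paper's treatment buys is an honest accounting of which cases the cited argument covers.

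That accounting is where your sketch is imprecise, and it is the one substantive point you should not gloss over. The obstruction to a $T$-independent (or even sub-exponential) bound on $G$ is the approximate kernel of $\Delta$ on $2$-forms built from translation-invariant harmonic forms on the cross-section $D\times S^1\times S^1$ that bound on each half $X_j\times S^1$; this is governed by the maps $H^2(X_j,\R)\to H^2(D_j,\R)$, i.e.\ by $d(\overline{X}_j)$, not by $b^1(M_T)=0$ and not by $\lambda_1$ alone ($\lambda_1$ only controls the exponentially decaying modes and hence the admissible range of $\beta$). When $d(\overline{X}_1)=d(\overline{X}_2)=d>0$ --- which is precisely the doubling of a non-symplectic-type pair --- extra translation-invariant modes survive on the neck, and one must either show they match across the gluing or pass to the sharper Kovalev--Singer framework; this is the case distinction the paper flags. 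Also, the strictly $T$-independent bound $\Norm{Gu}_{L^p_{k+2}}\leqslant C_{p,k}\Norm{u}_{L^p_k}$ you posit is stronger than what is available or needed: the low eigenvalues typically decay polynomially in $T$, so the correct statement allows $C$ to grow sub-exponentially, which still loses to $e^{-\beta T}$ after shrinking $\beta$. With those two corrections your argument closes up and agrees with the cited source.
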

\begin{proof}
The assertion is proved in \cite{K03} when $d(\overline{X}_1)=0$ or $d(\overline{X}_2)=0$, 
where $d(\overline{X}_j)$ is the dimension of the kernel of $\iota_j :H^2(X_j,\R )\longrightarrow H^2(D_j,\R )$ 
defined in Section 4.
This condition applies to admissible pairs of Fano type, but not to ones of non-symplectic type
(see also the proof of Proposition 5.38 in \cite{K03} and the remarks after Lemma 2.6 in \cite{KL11}, p.199).
However, the above theorem is still valid in the non-symplectic case, 
by a direct application of Kovalev-Singer \cite{KS01}, Proposition 4.2.
\end{proof}
Applying Theorem \ref{thm:Kovalev} to Theorem \ref{thm:Joyce}, 
we see that $\widetilde{\varphi}_T=\varphi_T +\der\eta_T$ yields 
a torsion-free $G_2$-structure on $M_T\times S^1$ for sufficiently large $T$.
Combined with Lemma \ref{lem:RFonM}, this completes the proof of Theorem \ref{thm:main}.
\end{proof}
\begin{remark}\rm
In the proof of Theorem \ref{thm:main}, to solve equation \eqref{G_2-PDE} given in Theorem \ref{thm:Joyce}
we may also use Joyce's book \cite{J00}, Theorem 11.6.1, 
where we need uniform bounds of the injectivity radius and Riemann curvature of $M_T\times S^1$
from below and above respectively. Obviously, we have such bounds 
because $X_1$ and $X_2$ are cylindrical manifolds with an asymptotically cylindrical metric.
\end{remark}

\section{Betti numbers of the resulting Calabi-Yau threefolds}\label{sec:topological inv}
We shall compute Betti numbers of the Calabi-Yau threefolds $M$
obtained in the doubling construction given in Corollary
\ref{cor:doubling}. Also, we shall see that the Betti numbers of $M$
are completely determined by those of the compact K\"ahler threefolds
$\overline{X}$.

In our doubling construction, we take two copies $(\overline{X}_j,D_j)$ of an admissible pair $(\overline{X},D)$ for $j=1,2$. 
Let $X_j=\overline{X}_j\setminus D_j$. We consider a homomorphism
\begin{equation}\label{map:iota}
 \iota_j: H^2(X_j, \R) \longrightarrow H^2(D_j\times S^1, \R)
\stackrel{\cong}{\longrightarrow} H^2(D_j,\R ),
  \end{equation}
where the first map is induced by the embedding $D_j\times S^1
\longrightarrow {X}_j$ and the second comes from the K\"{u}nneth
theorem. Set $d=d_j=d(\overline{X}_j)=\dim_\R \Ker \iota_j$. It is readily seen that
\begin{equation}\label{eq:dim iota}
\dim_\R \Image \iota_j =b^2(X)-d.
\end{equation}
The following formula seems to be well-known for compact K\"ahler threefolds (see \cite{L10}, Corollary $8.2$).
\begin{proposition}\label{prop:betti}
Let $(\overline{X}_j,D_j)$ be two copies of an admissible pair
$(\overline{X},D)$ for $j=1,2$ and let $d$ be as above. Then the
Calabi-Yau threefold $M$ obtained by the doubling construction in
Corollary $\ref{cor:doubling}$ has Betti numbers
\begin{equation}\label{eq:betti}
\begin{aligned}
\begin{cases}
b^1(M)=0,\\
b^2(M)=b^2(\overline{X})+d, \\
b^3(M)=2\left (b^3(\overline{X})+23+d-b^2(\overline{X})\right ).
\end{cases}
\end{aligned}
\end{equation}
Also, the Euler characteristic $\chi (M)$ is given by
\begin{align*}
\chi(M)=2(\chi(\overline{X})-\chi(D)).
\end{align*}
\end{proposition}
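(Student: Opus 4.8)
The plan is to compute $H^\ast(M,\R)$ by the Mayer--Vietoris sequence attached to the decomposition $M=U_1\cup U_2$, where $U_i$ is an open neighbourhood of the copy of $X=\overline{X}\setminus D$ coming from the $i$-th factor and $U_1\cap U_2$ deformation retracts onto the common cross-section $D\times S^1$ of the two cylindrical ends. Three ingredients are needed: the cohomology of $X$, the restriction maps $r_k\colon H^k(X,\R)\to H^k(D\times S^1,\R)$, and the action on $H^\ast(D\times S^1,\R)$ of the gluing diffeomorphism $g\colon(x,\theta)\mapsto(x,-\theta)$ (the doubling specialization of $F$ in Lemma \ref{lem:gluing_condition}, with $f=\id$). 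Since $X_1$ and $X_2$ are simply-connected, $b^1(M)=0$ is already contained in Theorem \ref{thm:main}; the real content is $b^2$ and $b^3$.

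First I would compute $H^\ast(X,\R)$ from the Gysin sequence of $D\hookrightarrow\overline{X}$. As $N$ is trivial, the Thom isomorphism $H^k(\overline{X},X;\R)\cong H^{k-2}(D,\R)$ turns the long exact sequence of the pair $(\overline{X},X)$ into
\begin{equation*}
\cdots \to H^{k-2}(D,\R)\xrightarrow{\ i_!\ } H^k(\overline{X},\R)\xrightarrow{\ j^\ast\ } H^k(X,\R)\to H^{k-1}(D,\R)\to\cdots.
\end{equation*}
Feeding in $b^1(\overline{X})=0$, the $K3$ Betti numbers $(1,0,22,0,1)$ of $D$, and the simple-connectivity of $X$ (which forces $i_!\colon H^0(D)\to H^2(\overline{X})$ to be injective), this yields $b^0(X)=1$, $b^1(X)=0$, $b^2(X)=b^2(\overline{X})-1$, $b^3(X)=b^3(\overline{X})+23+d-b^2(\overline{X})$, $b^4(X)=1+d$, and $b^5(X)=b^6(X)=0$. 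Here I use that $\rank i_!=\rank i^\ast$ (the Gysin map and the restriction $i^\ast\colon H^\ast(\overline{X})\to H^\ast(D)$ are adjoint for the Poincar\'e pairings), and that the map $\iota$ of the statement is exactly the descent of $i^\ast$ to $H^2(X)=H^2(\overline{X})/\R[D]$, since $i^\ast[D]=e(N)=0$; thus $\rank\iota=b^2(\overline{X})-1-d=:r$.

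Next I would pin down the ranks of the $r_k$. Modelling $X$ by a compact manifold $W$ with $\partial W=D\times S^1$, Lefschetz duality gives $\dim H^k(W,\partial W;\R)=b^{6-k}(X)$ and the ``half lives, half dies'' identity $\rank r_k+\rank r_{5-k}=b^k(D\times S^1)$. Combined with the evident values $\rank r_0=1$, $\rank r_1=0$, $\rank r_2=\rank\iota=r$, this forces $\rank r_3=23+d-b^2(\overline{X})$, $\rank r_4=1$, $\rank r_5=0$. The decisive simplification is that, because $b^1(D)=b^3(D)=0$, in each degree only one K\"unneth summand of $H^k(D\times S^1)=H^k(D)\oplus H^{k-1}(D)$ survives, so $g^\ast$ acts as a single scalar $(-1)^q$ on $H^k(D\times S^1)$; hence $g^\ast V_k=V_k$ for $V_k:=\Image r_k$, the connecting map $\rho_k(\alpha,\beta)=r_k\alpha-g^\ast r_k\beta$ satisfies $\Image\rho_k=V_k$, and $\dim\Ker\rho_k=2\,b^k(X)-\rank r_k$. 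Reading off
\begin{equation*}
b^k(M)=\dim\mathrm{coker}\,\rho_{k-1}+\dim\Ker\rho_k=\bigl(b^{k-1}(D\times S^1)-\rank r_{k-1}\bigr)+\bigl(2\,b^k(X)-\rank r_k\bigr)
\end{equation*}
gives $b^2(M)=b^2(\overline{X})+d$ and $b^3(M)=2\bigl(b^3(\overline{X})+23+d-b^2(\overline{X})\bigr)$; the remaining degrees reproduce these by Poincar\'e duality on the closed oriented $6$-manifold $M$, a useful internal consistency check.

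Finally, the Euler characteristic needs none of this bookkeeping: Mayer--Vietoris additivity gives $\chi(M)=2\chi(X)-\chi(D\times S^1)$, and since $\chi(D\times S^1)=\chi(D)\chi(S^1)=0$ and $\chi(X)=\chi(\overline{X})-\chi(D)$, we get $\chi(M)=2(\chi(\overline{X})-\chi(D))$. The main obstacle is the middle step: unlike $r_0,r_1,r_2$, the ranks of $r_3$ and $r_4$ are not directly visible, and the cleanest route is the Lefschetz-duality ``half lives, half dies'' identity on the manifold-with-boundary model of $X$. The only other point demanding care is checking that $g^\ast$ preserves each $\Image r_k$, which is precisely where the vanishing $b^1(D)=0$ of the $K3$ divisor $D$ is used.
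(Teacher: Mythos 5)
Your proposal is correct, and its skeleton is the same as the paper's: a Mayer--Vietoris sequence for $M\sim X_1\cup X_2$ with $X_1\cap X_2\sim D\times S^1$, with $b^k(M)$ read off as $\dim\mathrm{coker}\,\rho_{k-1}+\dim\Ker\rho_k$. The differences are in how you supply the two auxiliary inputs. For the Betti numbers of $X$ you use the Gysin sequence of the pair $(\overline{X},X)$ via the Thom isomorphism, where the paper runs a second Mayer--Vietoris for $\overline{X}=X\cup U$ and quotes the resulting identities $b^2(\overline{X})=b^2(X)+1$, $b^3(X)=b^3(\overline{X})+22+d-b^2(X)$ from \cite{KL11}; these are equivalent computations and your formula $b^3(X)=b^3(\overline{X})+23+d-b^2(\overline{X})$ agrees. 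More substantively, for the rank of the degree-$3$ restriction map $\tau\colon H^3(X)\to H^2(D)$ the paper invokes Kovalev--Lee's orthogonal decomposition $H^2(D)=\Image\tau\oplus\Image\iota$ (Lemma \ref{lem:KovLee}), whereas you derive the needed identity $\rank\tau+\rank\iota=22$ directly from Poincar\'e--Lefschetz duality on the manifold-with-boundary model of $X$ (``half lives, half dies''); this makes the argument self-contained and is exactly the dimension count the decomposition lemma is used for. You also make explicit a point the paper leaves implicit: that the gluing diffeomorphism acts by a scalar on each $H^k(D\times S^1)$ because $b^1(D)=b^3(D)=0$, so it preserves $\Image r_k$ and the images of the two summands in the connecting map coincide -- this is precisely what justifies $\dim_\R\Image(\iota_1+f^*\iota_2)=b^2(X)-d$ and the analogous statement for $\tau$. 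Your Euler characteristic argument via additivity and $\chi(D\times S^1)=0$ is the standard justification of what the paper dismisses as obvious. All numerical outputs match \eqref{eq:betti}.
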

\begin{proof}
Obviously, the second statement holds for our construction. Now we
restrict ourselves to find the second and third Betti numbers of
$M$ because $M$ is simply-connected. Since the normal bundle
$N_{D_{j}/\overline{X}_j}$  
is trivial in our assumption, there is a tubular neighborhood $U_j$ of $D_j$ in
$\overline{X}_j$ such that
\begin{equation}\label{eq:tub nbd1}
\overline{X}_j=X_j\cup U_j \qquad \text{and}\qquad X_j\cap U_j\simeq
D_j\times S^1\times \R_{>0} .
\end{equation}
Up to a homotopy equivalence, $X_j \cap U_j \sim D_j\times S^1$ as
$U_j$ contracts to $D_j$. Applying the Mayer-Vietoris theorem to
\eqref{eq:tub nbd1}, we see that
\begin{equation}\label{eq:KovLee}
b^2(\overline{X})=b^2(X)+1 \qquad \text{and}\qquad
b^3(X)=b^3(\overline{X})+22+d-b^2(X)
\end{equation}
(see \cite{KL11}, $(2.10)$). 
We next consider homotopy equivalences
\begin{equation}\label{eq:tub nbd2}
M \sim X_1 \cup X_2, \qquad X_1\cap X_2 \sim D\times S^1.
\end{equation}
Again, let us apply the Mayer-Vietoris theorem to \eqref{eq:tub nbd2}. 
Then we obtain the long exact sequence
\begin{equation}\label{seq:MV3}
\xymatrix{ 0\ar[r] &H^0(D) \ar[r]^-{\delta^1}&
H^2(M)\ar[r]^-{\alpha^2}& H^2(X_1)\oplus H^2(X_2) \ar[r]^-{\beta^2}
&H^2(D) \ar[r]&\cdots .}
\end{equation}
Note that the map $\beta^2$ in \eqref{seq:MV3} is given by
\begin{equation*}
\iota_1+f^*\iota_2: H^2(X_1,\R)\oplus H^2(X_2,\R ) \longrightarrow
H^2(D,\R ),
\end{equation*}
where
\begin{equation*}
\iota_j: H^2(X_j, \R) \longrightarrow H^2(D_j, \R) 
\end{equation*}
are homomorphisms defined in \eqref{map:iota} and
\[
f^* : H^2(D_2,\R) \longrightarrow H^2(D_1,\R)
\]
is the pullback of the identity
$f:D_1\stackrel{\cong}{\longrightarrow}D_2$. Hence we see from
\eqref{eq:dim iota} that
\[
\dim_\R \Image (\iota_1+f^*\iota_2)=b^2(X)-d.
\]
This yields
 \begin{align*}
 b^2(M)&=\dim_\R \Ker \alpha^2 +\dim_\R \Image \alpha^2 \\
 &=\dim_\R \Image \delta^1+\dim_\R \Ker (\iota_1+f^*\iota_2) \\
 &=1+2b^2(X)-(b^2(X)-d)=b^2(\overline{X})+d,
 \end{align*}
where we used \eqref{eq:KovLee} for the last equality. Remark that
$b^2(X_1)=b^2(X_2)$ holds for our computation. To find $b^3(M)$, we
shall consider a homomorphism
\begin{equation}\label{map:tau}
\tau_j: H^3(X_j,\R) \longrightarrow H^2(D_j, \R)
\end{equation}
which is induced by the embedding $U_j \cap X_j \longrightarrow X_j$
combined with
\begin{equation*}
X_j\cap U_j \simeq D_j\times S^1 \times \R_{>0} \qquad \text{and}
\qquad H^3(D_j\times S^1, \R) \cong H^2(D_j, \R).
\end{equation*}
The reader should be aware of the following lemma.
\begin{lemma}[Kovalev-Lee \cite{KL11}, Lemma 2.6]\label{lem:KovLee}
Let $\iota_j$ and $\tau_j$ be homomorphisms defined in
\eqref{map:iota} and \eqref{map:tau} respectively. Then we have the
orthogonal decomposition
\[
H^2(D_j,\R)=\Image \tau_j \oplus \Image \iota_j
\]
with respect to the intersection form on $H^2(D_j,\R)$ for each
$j=1,2$.
\end{lemma}
In an analogous way to the computation of $b^2(M)$, we apply the
Mayer-Vietoris theorem to \eqref{eq:tub nbd2}:
\begin{equation}\label{seq:MV4}
\xymatrix@R=-1ex{
\cdots\ar[r]& H^2(X_1)\oplus H^2(X_2)\ar[r]^-{\iota_1+f^*\iota_2}
& H^2(D)\ar[r]^{\delta^2} & H^3(M)\ar[r]&\\
\ar[r]^-{\alpha^3}& H^3(X_1)\oplus H^3(X_2) \ar[r]^-{\beta^3}
&H^2(D)\ar[r]& \cdots .& }
\end{equation}
Similarly, the map $\beta^3$ is given by
\[
\tau_1+f^*\tau_2: H^3(X_1)\oplus H^3(X_2)\longrightarrow H^2(D) .
\]
On one hand, Lemma $\ref{lem:KovLee}$ and \eqref{eq:dim iota} show
that
\[
\dim_\R \Image \tau_j =22+d-b^2(X).
\]
Hence we find that
\begin{align}
\begin{split}\label{eq:dim tau}
\dim_\R \Ker(\tau_1+f^*\tau_2)&=b^3(X_1)+b^3(X_2)-\dim_\R \Image (\tau_1+f^*\tau_2)\\
&= 2b^3(X)-(22+d-b^2(X)).
\end{split}
\end{align}
On the other hand, we have the equality
\[
22=\dim_\R \Image \delta^2 +\dim_\R\Image (\iota_1+f^*\iota_2)
\]
by combining the well-known result on the cohomology of a $K3$ surface $D$
with the Mayer-Vietoris long exact sequence \eqref{seq:MV4}. Then we have 
\begin{equation}\label{eq:dim alpha3}
\dim_\R \Ker \alpha^3 =\dim_\R \Image \delta^2=22-b^2(X)+d.
\end{equation}
Thus we find from \eqref{eq:dim tau} and \eqref{eq:dim alpha3} that
 \begin{align*}
 b^3(M)&=\dim_\R \Ker \alpha^3 +\dim_\R \Ker (\tau_1+f^*\tau_2)=2b^3(X).
\end{align*}
Substituting the above equation into \eqref{eq:KovLee}, we obtain the assertion.
\end{proof}
\begin{remark}\rm
This formula shows that the topology of the resulting Calabi-Yau
threefolds $M$ only depends on the topology of the given compact
K\"ahler threefolds $\overline{X}$. Also one can determine the Hodge
diamond of $M$ from Proposition \ref{prop:betti} because we already
know that $h^{0,0}=h^{3,0}=1$ and $h^{1,0}=h^{2,0}=0$ by the well-known
result on Calabi-Yau manifolds (see \cite{J00}, Proposition 6.2.6).
\end{remark}

\section{Two types of admissible pairs}
In this section, we will see the construction of admissible pairs
$(\overline{X},D)$ which will be needed for obtaining Calabi-Yau
threefolds in the doubling construction. There are two types of
admissible pairs. One is said to be \textit{of Fano type}, and the
other \textit{of non-symplectic type}. We will give explicit
formulas for topological invariants of the resulting Calabi-Yau
threefolds from these two types of admissible pairs. For the
definition of admissible pairs, see Definition \ref{def:admissible}.

\subsection{Fano type}
Admissible pairs $(\overline{X},D)$ are ingredients in our
construction of Calabi-Yau threefolds and then it is important how
to explore appropriate compact K\"ahler threefolds $\overline{X}$
with an anticanonical $K3$ divisor $D\in\norm{-K_{\overline{X}}}$. In
\cite{K03}, Kovalev constructed such pairs from nonsingular Fano
varieties.

\begin{theorem}[Kovalev \cite{K03}]\label{th:ad Fano}
Let $V$ be a Fano threefold, $D\in\norm{-K_V}$ a $K3$ surface, and let $C$ be a
smooth curve in $D$ representing the self-intersection class of $D\cdot D$.
Let $\varpi : \overline{X}\dasharrow V$ be the blow-up of $V$ along
the curve $C$. Taking the proper transform of $D$ under the blow-up
$\varpi$, we still denote it by $D$. Then $(\overline{X},D)$ is an
admissible pair.
\end{theorem}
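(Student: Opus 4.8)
The plan is to verify the four conditions of Definition \ref{def:admissible} for the pair $(\overline{X},\widetilde{D})$, where $\widetilde{D}$ denotes the proper transform of $D$ under $\varpi$ (the divisor called $D$ again in the statement). Condition (a) is immediate: since $V$ is smooth projective and $C\subset V$ is a smooth curve, the blow-up $\overline{X}=\mathrm{Bl}_C V$ is again smooth projective, hence K\"ahler. For condition (b) I would run the standard blow-up computation of the canonical class. Blowing up a smooth curve in a threefold gives $K_{\overline{X}}=\varpi^*K_V+E$ with $E$ the exceptional divisor, while $\varpi^*D=\widetilde{D}+E$ because $D$ is smooth along $C$ (multiplicity one). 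As $D\in\norm{-K_V}$, combining these yields $-K_{\overline{X}}=\varpi^*(-K_V)-E=\varpi^*D-E=\widetilde{D}$, so $\widetilde{D}$ is anticanonical. Its smoothness follows because the proper transform is the blow-up of $D$ along the Cartier divisor $C\subset D$, and blowing up a smooth variety along a smooth divisor is an isomorphism; thus $\widetilde{D}\cong D$ is a smooth $K3$ surface.

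The heart of the algebraic part is condition (c), and this is where the choice of $C$ enters. Restricting the identity $\widetilde{D}=\varpi^*D-E$ to $\widetilde{D}$ and using the isomorphism $\varpi|_{\widetilde{D}}\colon\widetilde{D}\longrightarrow D$, I would compute
\[
N_{\widetilde{D}/\overline{X}}=\mathcal{O}_{\overline{X}}(\widetilde{D})|_{\widetilde{D}}\cong N_{D/V}\otimes\mathcal{O}_D(-(E\cap\widetilde{D})).
\]
The exceptional divisor meets the proper transform along the section $\P(N_{C/D})\subset\P(N_{C/V})=E$, which $\varpi$ carries isomorphically onto $C$, so $\mathcal{O}_D(E\cap\widetilde{D})\cong\mathcal{O}_D(C)$. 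By hypothesis $C$ represents the class $D\cdot D=c_1(N_{D/V})$, and since a line bundle on the $K3$ surface $D$ is determined by its first Chern class, $\mathcal{O}_D(C)\cong N_{D/V}$. Hence $N_{\widetilde{D}/\overline{X}}\cong N_{D/V}\otimes N_{D/V}^{-1}\cong\mathcal{O}_D$ is trivial, which is exactly condition (c).

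Condition (d) splits into two parts. That $\overline{X}$ is simply-connected is routine: a Fano manifold is simply-connected, and blowing up along a smooth center does not change the fundamental group, so $\pi_1(\overline{X})\cong\pi_1(V)=1$. The genuinely delicate point -- and the step I expect to be the main obstacle -- is that $X=\overline{X}\setminus\widetilde{D}$ is simply-connected. One cannot simply descend to $V$: for instance $\P^3\setminus D$ with $D$ a smooth quartic $K3$ has $\pi_1\cong\Z/4\Z$, so removing the anticanonical divisor from the Fano itself need not give a simply-connected complement. The blow-up is precisely what repairs this.

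To control $\pi_1(X)$ I would argue in two steps. First, a van Kampen argument on $\overline{X}=X\cup U$, where $U$ is a tubular neighbourhood of $\widetilde{D}$ with $U\simeq\widetilde{D}$ and $X\cap U$ homotopy equivalent to the unit normal bundle $\widetilde{D}\times S^1$ (using condition (c)), together with $\pi_1(\widetilde{D})=1$ and $\pi_1(\overline{X})=1$, shows that $\pi_1(X)$ is normally generated by a single meridian $\gamma$ linking $\widetilde{D}$. Second, I would kill $\gamma$ geometrically: choose a fibre $L\cong\P^1$ of the exceptional $\P^1$-bundle $E\to C$. Since $\widetilde{D}\cap E$ is a section of $E\to C$, the fibre $L$ meets $\widetilde{D}$ transversally in the single point $p=L\cap(\widetilde{D}\cap E)$, so a small loop about $p$ in $L$ is a meridian and hence conjugate to $\gamma$. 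But $L\setminus\{p\}\cong\C\subset X$ is contractible, so this loop is nullhomotopic in $X$; thus $\gamma=1$ and $\pi_1(X)=1$. Combining the four verifications proves that $(\overline{X},\widetilde{D})$ is an admissible pair. The only step requiring real care is this last transversality-and-meridian argument, since everything else reduces to standard blow-up formulae.
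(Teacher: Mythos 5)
Your proof is correct. The paper itself offers no argument for this theorem --- its ``proof'' is just the citation to Kovalev \cite{K03}, Corollary 6.43 and Proposition 6.42 --- and what you have written is essentially a faithful reconstruction of Kovalev's argument there: the adjunction/blow-up computation $-K_{\overline{X}}=\varpi^*D-E=\widetilde{D}$, the triviality of the normal bundle via $\mathcal{O}_D(C)\cong N_{D/V}$ (using that $\mathrm{Pic}$ of a $K3$ injects into $H^2$), and the killing of the meridian by the punctured exceptional fibre $L\setminus\{p\}\cong\C$ to get $\pi_1(X)=1$.
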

\begin{proof}
See \cite{K03}, Corollary $6.43$, and also Proposition $6.42$.
\end{proof}

An admissible pair $(\overline{X},D)$ given in Theorem $\ref{th:ad
Fano}$ is said to be of \textit{Fano type} because this pair arises
from a Fano threefold $V$. Note that $\overline{X}$ itself is {\it
not} a Fano threefold in this construction.

\begin{proposition}\label{prop:betti Fano type}
Let $V$ be a Fano threefold and $(\overline{X},D)$ an admissible
pair of Fano type given in Theorem $\ref{th:ad Fano}$. Let $M$ be
the Calabi-Yau threefold constructed from two copies of
$(\overline{X},D)$ by Corollary $\ref{cor:doubling}$. Then we have
\begin{equation*}
\begin{cases}
b^2(M)=b^2(V)+1,\\
b^3(M)=2\left(b^3(V)-K_V^3+24-b^2(V)\right).
\end{cases}
\end{equation*}
In particular, the cohomology of $M$ is completely determined by the
cohomology of $V$.
\end{proposition}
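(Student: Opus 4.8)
The plan is to reduce Proposition \ref{prop:betti Fano type} to the already-established Proposition \ref{prop:betti} by expressing the invariants $b^2(\overline{X})$, $b^3(\overline{X})$ appearing there in terms of the invariants $b^2(V)$, $b^3(V)$, $K_V^3$ of the underlying Fano threefold $V$. Since $M$ is constructed by doubling the admissible pair $(\overline{X},D)$ of Theorem \ref{th:ad Fano}, Proposition \ref{prop:betti} gives $b^2(M)=b^2(\overline{X})+d$ and $b^3(M)=2(b^3(\overline{X})+23+d-b^2(\overline{X}))$, so everything comes down to (i) computing the effect of the blow-up $\varpi:\overline{X}\dashrightarrow V$ along the curve $C$ on Betti numbers, and (ii) evaluating the defect $d=\dim_\R\Ker\iota$ for the Fano-type pair.

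First I would recall the standard formula for the cohomology of a blow-up. Since $\overline{X}$ is the blow-up of the smooth threefold $V$ along the smooth curve $C\subset D$, the blow-up formula gives $H^k(\overline{X})\cong H^k(V)\oplus H^{k-2}(C)$, the extra class in degree $2$ being the class of the exceptional divisor $E$ (a $\P^1$-bundle over $C$). Hence $b^2(\overline{X})=b^2(V)+b^0(C)=b^2(V)+1$, using that $C$ is a connected smooth curve, and $b^3(\overline{X})=b^3(V)+b^1(C)=b^3(V)+2g(C)$, where $g(C)$ is the genus of $C$. This already yields the stated value $b^2(M)=b^2(V)+1$ once I verify that the defect $d$ vanishes for Fano type.

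The key points I expect to require the most care are the claim $d=0$ and the genus computation. For $d=0$: the pair being of Fano type means it arises from a genuine Fano $V$, and as noted in the proof of Theorem \ref{thm:Kovalev} this is exactly the case where the kernel of $\iota_j$ is trivial (the remark there that ``$d(\overline{X}_j)=0$ applies to admissible pairs of Fano type''). So I would invoke that fact, which ties the vanishing of $d$ to the ampleness properties of $-K_V$ and the surjectivity of $H^2(X)\to H^2(D)$ up to the one-dimensional cokernel already accounted for. For the genus: $C$ represents the self-intersection class $D\cdot D$ inside $D$, i.e. $C\in|{-K_V}|_D|$ where $-K_V|_D = D|_D = N_{D/V}$; by adjunction on the $K3$ surface $D$ (where $K_D=0$) one has $2g(C)-2 = C\cdot C = (D|_D)\cdot(D|_D) = D^3 = (-K_V)^3 = -K_V^3$, so $2g(C)=K_V^3+2$, hence $b^3(\overline{X})=b^3(V)+K_V^3+2$. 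Wait — I should double check the sign: $D\in|-K_V|$ gives $D=-K_V$ in $\mathrm{Pic}(V)$, so $D^3=(-K_V)^3=-K_V^3$; since $-K_V$ is ample, $D^3>0$, consistent with $2g(C)-2>0$ only when $K_V^3<-2$, which need not hold for all Fano threefolds, so I must be careful about whether $C$ is connected and about low-degree cases. I would therefore state the genus formula as $2g(C)-2=(-K_V)^3$ and carry it through symbolically.

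Substituting into Proposition \ref{prop:betti} with $d=0$ finishes the computation:
\begin{align*}
b^2(M)&=b^2(\overline{X})=b^2(V)+1,\\
b^3(M)&=2\bigl(b^3(\overline{X})+23-b^2(\overline{X})\bigr)
=2\bigl(b^3(V)+(-K_V^3+2)+23-(b^2(V)+1)\bigr)\\
&=2\bigl(b^3(V)-K_V^3+24-b^2(V)\bigr),
\end{align*}
which is exactly the asserted formula. The final sentence then follows since every quantity on the right depends only on the cohomology of $V$ together with the intersection number $K_V^3$. The main obstacle I anticipate is pinning down the two auxiliary facts cleanly — the vanishing $d=0$ for Fano type (which I would cite from the discussion surrounding Theorem \ref{thm:Kovalev} and \cite{K03}) and the adjunction computation of $g(C)$ on the $K3$ surface $D$ — after which the rest is direct substitution into the already-proved Proposition \ref{prop:betti}.
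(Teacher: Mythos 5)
Your proposal is correct and follows essentially the same route as the paper: reduce to Proposition \ref{prop:betti} with $d=0$ (the paper justifies this via the Lefschetz hyperplane theorem), apply the blow-up formula $b^2(\overline{X})=b^2(V)+1$, $b^3(\overline{X})=b^3(V)+2g$, and substitute. The only difference is cosmetic: where the paper simply cites $g(V)=\tfrac{-K_V^3}{2}+1$ from \cite{K03}, you derive $2g(C)-2=(-K_V)^3$ by adjunction on the $K3$ surface $D$, and after your sign check this lands on the same (correct) formula.
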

\begin{proof}
Let $d$ be the dimension of the kernel of the homomorphism
\[
\iota : H^2(X,\R) \longrightarrow H^2(D,\R)
\]
as in Section $\ref{sec:topological inv}$. Then note that $d=0$ by
the Lefschetz hyperplane theorem whenever $(\overline{X}, D)$ is of
Fano type. Applying the well-known result on the cohomology of blow-ups,
one can find that
\[
H^2(\overline{X})\cong H^2(V)\oplus \R \qquad \text{and} \qquad
H^3(\overline{X}) \cong H^3 (V) \oplus \R^{2g(V)},
\]
where $\displaystyle g(V)=\frac{-K_V^3}{2}+1$ is the genus of a Fano
threefold (see \cite{K03}, (8.52)). This yields
\begin{equation*}\label{eq:betti Fano}
b^2(\overline{X})=b^2(V)+1 \qquad \text{and} \qquad
b^3(\overline{X})=b^3(V)+2g(V).
\end{equation*}
Substituting this into Proposition $\ref{prop:betti}$, we can show
our result.
\end{proof}

\begin{remark}\rm
We have another method to compute the Euler characteristic
$\chi(M)$. In fact, we can see easily that if $\overline{X}$ is the
blow-up of $D$ along $C$ then the Euler characteristic of
$\overline{X}$ is given by
\[
\chi(\overline{X})=\chi(V)-\chi(C)+\chi(E)
\]
where $E$ is the exceptional divisor of the blow-up $\varpi$. Hence
we can independently compute $\chi(M)$ by
\begin{align*}
\chi(M)&=2(\chi(\overline{X})-\chi(D)) \\
&=2(\chi(V)+\chi(C)-\chi(D))
\end{align*}
because $E$ is a $\C P^1$-bundle over the smooth curve $C$. Since the
Euler characteristic is also given by $\chi(M)=\sum_{i=0}^{\dim_\R
M}(-1)^ib^i(M)$, we can check the consistency of our computations.
\end{remark}

\subsection{Non-symplectic type}\label{sec:non-symplectic}

In \cite{KL11}, Kovalev and Lee gave a large class of admissible
pairs $(\overline{X},D)$ from $K3$ surface $S$ with a non-symplectic
involution $\rho$. They also used the classification result of $K3$
surfaces $(S,\rho)$  due to Nikulin \cite{N79,N80,N83} for obtaining
new examples of compact irreducible $G_2$-manifolds. Next we will give a quick review on this construction. 
For more details, see \cite{KL11} Section $4$. 

\subsubsection{{$K3$ surfaces with a non-symplectic involution}}
Let $S$ be a $K3$ surface. Then the vector space $H^{2,0}(S)$ is
spanned by a holomorphic volume form $\Omega$, which is unique up to
multiplication of a constant. An automorphism $\rho$ of $S$ is said
to be \textit{non-symplectic} if its action on $H^{2,0}(S)$ is
nontrivial. We shall consider a non-symplectic involution: 
\[
\rho^2=\mathrm{id} \qquad \text{and} \qquad \rho^*\Omega
= - \;\Omega . 
\]
The intersection form of $S$ associates a lattice structure, i.e., a
free abelian group of finite rank endowed with a nondegenerate
integral bilinear form which is symmetric. We refer to this lattice
as the \textit{$K3$ lattice}. It is crucial that the $K3$ lattice
has a nice property for a geometrical description of $S$. Hence we
shall review some fundamental concepts of lattice theory which will
be needed later.

Recall that the lattice $L$ is said to be \textit{hyperbolic} if the
signature of $L$ is $(1,t)$ with $t>0$. In particular, we are
interested in the case where $L$ is \textit{even}, i.e., the
quadratic form $x^2$ is $2\Z$-valued for any $x\in L$. We
can regard $L$ as a sublattice of $L^*=\mathrm{Hom} (L,\Z )$
by considering the canonical embedding $i:L\longrightarrow L^*$
given by $i(x)y=\braket{x,y}$ for $y\in L^*$. Then $L$ is said to be
\textit{unimodular} if the quotient group $L^*/L$ is trivial.
In general, $L^*/L$ is a finite abelian group and is called the
\textit{discriminant group} of $L$. One can see that the cohomology
group $H^2(S,\Z )$ of each $K3$ surface $S$ is a unimodular,
nondegenerate, even lattice with signature $(3,19)$. Let $H$ and
$E_8$ denote the hyperbolic plane lattice 
$\begin{pmatrix}
0&1
\\1&0\end{pmatrix}$ 
and the root lattice associated to the root system $E_8$ respectively. 
Then $H^2(S,\Z )$ is isomorphic to
$3H\oplus2(-E_8)$. Let us choose a \textit{marking}
$\phi:H^2(S,\Z )\longrightarrow L$ of $S$, that is, a lattice
isomorphism. It is clear that the pullback $\rho^*$ induces an
isometry of $L$ with order $2$ defined by $\phi\circ
\rho^*\circ \phi^{-1}$. Hence we can consider the
\textit{invariant sublattice} $L^{\rho}$. Then $L$ is said to be
\textit{$2$-elementary} if the discriminant group of $L^{\rho}$ is
isomorphic to ${(\Z_2)}^a$ for some $a\in \Z_{\geqslant 0}$.
\begin{theorem}[Nikulin \cite{N79, N80, N83}]
Let $(S,\rho)$ be a $K3$ surface $S$ with a non-symplectic involution $\rho$. 
Then the deformation class of $(S,\rho)$ depends only on the following triplet $(r,a,\delta)\in
\Z^3$ given by
\begin{enumerate}
\item[(i)] $r=\rank\; L^{\rho}$,
\item[(ii)] $(L^{\rho})^*/L^{\rho}\cong {(\Z_2)}^a$, and
\item[(iii)] $\delta(L^{\rho})=
\begin{cases}
0 & \text{if\;\;} y^2\in\Z \text{\;\;for all\;\;} y\in (L^{\rho})^*, \\
1 & \text{otherwise}  \: .
\end{cases}$
\end{enumerate}
\end{theorem}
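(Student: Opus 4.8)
The plan is to reduce the classification of deformation classes to a purely lattice-theoretic statement via the global Torelli theorem for $K3$ surfaces, and then to invoke Nikulin's classification of $2$-elementary even lattices. First I would fix the abstract $K3$ lattice $L\cong 3H\oplus 2(-E_8)$ and, for a marked pair $(S,\rho)$, consider the order-two isometry $\iota=\phi\circ\rho^*\circ\phi^{-1}$ of $L$. Since $\rho^*\Omega=-\Omega$, the period $[\Omega]$ lies in the $(-1)$-eigenspace $L^{-\rho}\otimes\C$, where $L^{-\rho}=(L^\rho)^\perp$. A signature count — the $\pm$ eigenspaces split the $(3,19)$ form, the invariant Kähler class contributes a positive direction to $L^\rho$, and the real plane spanned by $\Real\Omega,\Image\Omega$ contributes $(2,0)$ to $L^{-\rho}$ — shows that $L^\rho$ is even of signature $(1,r-1)$, hence hyperbolic, and that it is $2$-elementary because it is the invariant lattice of an involution of the unimodular lattice $L$ (one has $2L\subset L^\rho\oplus L^{-\rho}$, forcing the discriminant group to be killed by $2$).

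Next I would set up the period map. The periods of marked pairs of a fixed type sweep out the domain $\{[\omega]\in\P(L^{-\rho}\otimes\C):\omega^2=0,\ \omega\bar\omega>0\}$ with the hyperplanes orthogonal to $(-2)$-classes in $L^{-\rho}$ removed. By surjectivity of the period map together with the global Torelli theorem, two such pairs are deformation equivalent precisely when their lattice data agree, that is, when some isometry of $L$ carries one copy of the primitive sublattice $L^\rho$ to the other while commuting with the respective involutions. Since this open period domain is connected once the type is fixed, deformation classes of $(S,\rho)$ are thereby placed in bijection with isometry classes of primitive embeddings $L^\rho\hookrightarrow L$ of the invariant lattice into the $K3$ lattice, taken up to $\Aut(L)$.

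The heart of the matter is then lattice theory, and this is the step I expect to be the main obstacle. I would invoke two results from Nikulin's theory of discriminant forms. First, an even hyperbolic $2$-elementary lattice is determined up to isometry by the triplet $(r,a,\delta)$, where $r$ is its rank, $a$ is the length of its discriminant group $(\Z_2)^a$, and $\delta$ records whether the discriminant quadratic form is $\Z$-valued; this reduces the abstract lattice $L^\rho$ to the stated invariants. Second, an even indefinite lattice whose rank is suitably large relative to the size of its discriminant admits a \emph{unique} primitive embedding into the unimodular lattice $L$ up to $\Aut(L)$: the complement $L^{-\rho}$ is forced to carry the opposite discriminant form, and Nikulin's uniqueness criterion for primitive embeddings applies in the unimodular target. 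Combining these two facts shows that the embedding $L^\rho\hookrightarrow L$, and hence by the previous paragraph the deformation class of $(S,\rho)$, depends only on $(r,a,\delta)$. The genuine difficulty lies entirely in the discriminant-form bookkeeping underlying these two lattice-theoretic inputs, which is precisely the content of Nikulin's papers; by contrast the geometric reduction via Torelli is comparatively formal.
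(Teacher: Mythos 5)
The paper offers no proof of this statement: it is quoted as Nikulin's classification theorem, with the references \cite{N79,N80,N83} standing in for the argument, so there is no in-paper proof to compare against. Your sketch is the standard route and is sound in outline. The signature count showing that $L^\rho$ is even, hyperbolic of signature $(1,r-1)$ and $2$-elementary is correct (the inclusion $2L\subset L^\rho\oplus L^{-\rho}$, coming from $x\pm\rho^*x$, does force the discriminant group to be $2$-torsion), and the reduction of deformation classes to isometry classes of primitive embeddings $L^\rho\hookrightarrow L$ via surjectivity of the period map and the Torelli theorem is how the classification is actually established. Two points you pass over quickly: the period domain attached to the anti-invariant lattice of signature $(2,20-r)$ has two connected components, so one must check that they are exchanged by a suitable isometry commuting with the involution (or identify conjugate markings); and to realize an abstract involution of $L$ by a holomorphic involution of $S$ one needs the strong form of Torelli for Hodge isometries preserving the K\"ahler cone, not merely injectivity of the period map. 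Neither is a genuine obstruction. As you acknowledge, the substance of the theorem lies in the two lattice-theoretic inputs --- that an even hyperbolic $2$-elementary lattice is determined up to isometry by $(r,a,\delta)$, and that its primitive embedding into the $K3$ lattice $3H\oplus 2(-E_8)$ is unique up to $\Aut(L)$ --- and your proposal, like the paper, ultimately defers these to Nikulin's discriminant-form machinery rather than proving them; so the proposal should be read as a correct reduction to the cited results rather than a self-contained proof.
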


\subsubsection{The cohomology for non-symplectic type}
Let $\sigma$ be a holomorphic involution of $\C P^1$ given by
\[
\sigma: \C P^1 \longrightarrow \C P^1, \qquad z \longmapsto - z
\]
in the standard local coordinates. Let $G$ be the cyclic group of
order $2$ generated by $\rho\times\sigma$. Let $X'$ be the trivial
$\C P^1$-bundle over $S$. Then the group $G$ naturally acts on $X'$.
Taking a point $x$ in the fixed locus $W=(X')^G$ under the action of
$G$, we denote the stabilizer of $x$ as $G_x$. Then $G_x$ is an
endomorphism of the tangent space $T_x X'$ which satisfies $G_x
\subset {\rm{SL}}(T_x X')$. Define the quotient variety
\[
Z=X'/G_x
\]
and then the above condition $G_x \subset \mathrm{SL}(T_xX')$ yields
that the algebraic variety $Z$ admits only Gorenstein quotient
singularities \cite{W74}. Therefore, there is a crepant resolution
$\overline{\pi}: \overline{X} \dasharrow Z$ due to Roan's result
(see \cite{R94}, Main theorem).

Let $W$ be the fixed locus of $X'$ under the action of $G$
as above. We assume that $W$ is nonempty. In fact, this condition
always holds unless $(r,a,\delta)=(10,10,0)$, i.e., $S/\rho$ is an Enriques surface. Then it is known that $W$ is the
disjoint union of some rational curves. Let $\widetilde{\pi}:
\widetilde{X}\dasharrow X'$ be the blow-up of $X'=S\times \C P^1$
along the fixed locus $W$. Then $\widetilde{X}$ is simply-connected
as $X'$ is simply-connected. Also, the action of $G$ on $X'$ lifts
to the action of $\widetilde{G}$ on $\widetilde{X}$ as follows.
Since we have the isomorphism
\[
\widetilde{X}\setminus \widetilde{\pi}^{-1}(W)\cong X' \setminus W,
\]
it suffices to consider the action of $\widetilde{G}$ on a point $x\in
\widetilde{\pi}^{-1}(W)$. Setting $g\cdot x=x$ for all $g\in
\widetilde{G}$ and $x\in \widetilde{\pi}^{-1}(W)$, we have the lift
$\widetilde G$ on $\widetilde X$. Observe that $\widetilde
X/\widetilde G \cong\overline X$ as the quotient of the variety
$\widetilde X$ by $\widetilde G$. Summing up these arguments, we
have the following commutative diagram:
\[
\xymatrix{
 \widetilde{G}\quad\stackrel{\text{lift}}{\curvearrowright}\widetilde{X}
\ar@<3.5ex>@{-->}[d]_{\widetilde{\pi}}\ar@{>>}[r]^-{\widetilde f}
& \overline{X}\ar@{-->}[d]^{\overline{\pi}: \text{ crepant}} \\
G  \quad \curvearrowright X' \ar@{>>}[r]^-{f} & Z }
\]
where $\widetilde f $ (resp. $f$) is the quotient map with respect
to $\widetilde G$ (resp. $G$). Taking a non-fixed point $z\in \C P^1\setminus \set{0,\infty}$, let us define $D'=S\times \set{z}$,
which is a $K3$ divisor on $X'$. Setting $D$ as the image of $D'$ in $Z$, we still denote by $D$ the proper transform of $D$ under $\overline{\pi}$. 
Then we can see that $D$ is isomorphic
to $S$. Furthermore, the normal bundle $N_{D/ \overline{X}}$ is holomorphically trivial. In order to show
$(\overline{X}, D)$ is an admissible pair, we need the following three lemmas due to Kovalev and Lee \cite{KL11}.

\begin{lemma}[Kovalev-Lee \cite{KL11}, Proposition $4.1$]\label{lem:KL1}
$\overline X$ is a compact K\"ahler threefold. Moreover, there
exists a K\"ahler class $[\omega]\in H^2(\overline{X}, \R)$ such
that
\[
[\kappa]=[\restrict{\omega}{D}]\in H^2(D,\R)
\]
where $[\kappa]$ is a $\rho$-invariant K\"ahler class on $D$.
\end{lemma}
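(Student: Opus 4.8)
The plan is to produce a single Kähler class on $\overline{X}$ whose restriction to $D$ is a prescribed $\rho$-invariant Kähler class; this settles both assertions at once, since exhibiting a Kähler form proves that $\overline{X}$ is a compact Kähler threefold (compactness being clear, as $\overline{X}$ is a resolution of a quotient of the compact $X'=S\times\C P^1$), while keeping the class under control gives the compatibility with $D$.

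First I would build the target class on $S\cong D$. Since $\rho$ is a holomorphic automorphism of the $K3$ surface $S$, it preserves the open convex Kähler cone; hence for any Kähler class $[\alpha]$ the average $\tfrac12([\alpha]+\rho^*[\alpha])$ is again Kähler and $\rho$-invariant, so a $\rho$-invariant Kähler class $[\kappa]$ exists, with a $\rho$-invariant Kähler representative $\kappa$ obtained by averaging a Kähler representative over $\rho$. Next I would lift this to $X'$: replacing a Fubini--Study form by its average over $\sigma$, fix a $\sigma$-invariant Kähler form $\omega_{FS}$ on $\C P^1$, and with $p,q$ the two projections set $\omega'=p^*\kappa+q^*\omega_{FS}$. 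Using $p\circ(\rho\times\sigma)=\rho\circ p$ and $q\circ(\rho\times\sigma)=\sigma\circ q$ one checks $\omega'$ is a $G$-invariant Kähler form on $X'$, and since $q$ is constant on the slice $D'=S\times\{z\}$ its restriction is $\omega'|_{D'}=\kappa$ under $D'\cong S$. Because $z\neq 0,\infty$, the orbit of $D'$ avoids the fixed locus $W$, so $G$ acts freely near $D'$, the quotient map identifies $D'$ with $D$, and the induced class on $D$ is $[\kappa]$.

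It then remains to descend $\omega'$ through $Z=X'/G$ to the crepant resolution $\overline{X}$ while tracking positivity and the restriction to $D$. Being $G$-invariant, $\omega'$ descends to an (orbifold) Kähler form on $Z$ whose class $[\omega_Z]$ pulls back to a nef class $\overline{\pi}^*[\omega_Z]$ on $\overline{X}$ that is positive away from the exceptional locus $E$ of $\overline{\pi}$. I expect this last passage, namely Kählerness of the resolution, to be the main obstacle. The naive recipe of averaging a Kähler form on the blow-up $\widetilde{X}$ and pushing it down fails, because the quotient $\widetilde{X}\to\overline{X}$ is branched along the divisor fixed by the lifted involution, and the induced metric degenerates transversally to $E$. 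The way around it is that the crepant resolution of the transverse $A_1$ singularities of $Z$ is the blow-up of the reduced singular curve, so $\overline{\pi}$ is a projective morphism and $-E$ is $\overline{\pi}$-ample; hence for small $\epsilon>0$ the class $[\omega]=\overline{\pi}^*[\omega_Z]-\epsilon[E]$ is Kähler, which proves that $\overline{X}$ is a compact Kähler threefold. (Here one may take $S$ projective, its $\rho$-invariant lattice being hyperbolic and carrying a $\rho$-invariant polarization, so that $X'$, $Z$ and the blow-up $\overline{X}$ are all projective and the classes above can be taken ample.)

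Finally I would verify the restriction. Since $\mathrm{Sing}(Z)$ lies over the branch points $0,\infty$ of $\C P^1\to\C P^1/\sigma$, whereas $D$ lies over $z\neq 0,\infty$, the divisor $D$ is disjoint from $E$ and $\overline{\pi}$ is an isomorphism near $D$. Therefore $[E]|_D=0$ and $\overline{\pi}^*[\omega_Z]|_D=[\kappa]$, giving $[\omega]|_D=[\kappa]$, a $\rho$-invariant Kähler class on $D$, as required.
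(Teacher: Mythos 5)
The paper gives no proof of this lemma to compare against: it is quoted from Kovalev--Lee \cite{KL11}, Proposition 4.1, and of the three imported lemmas the authors only write out a proof for Lemma \ref{lem:anticano}. Judged on its own, your argument is correct and is essentially the standard proof. The averaging construction of a $\rho$-invariant Kähler class $[\kappa]$, the $G$-invariant Kähler form $\omega'=p^*\kappa+q^*\omega_{FS}$ on $X'=S\times\C P^1$ with $\omega'|_{D'}=\kappa$, and the observation that $D$ is disjoint from the exceptional divisor $E$ (because $\mathrm{Sing}(Z)$ lies over the $\sigma$-fixed points $0,\infty$ while $z$ is not fixed, so $\overline{\pi}$ is an isomorphism near $D$ and $[E]|_D=0$) are all exactly what is needed to control the restriction of the class to $D$. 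You also correctly isolate the one genuinely delicate point, Kählerness of the crepant resolution, and your treatment of it is sound: $Z$ has transverse $A_1$ singularities along the image of $W$, its crepant resolution is the blow-up of the reduced singular curve, so $\overline{\pi}$ is a projective morphism with $\mathcal{O}(-E)$ relatively ample, and $\overline{\pi}^*[\omega_Z]-\epsilon[E]$ is Kähler for small $\epsilon>0$; your remark that the naive descent of an averaged Kähler form through the branched double cover $\widetilde{X}\to\overline{X}$ degenerates along the branch divisor is also a real issue that many would overlook. The only step left compressed is the passage from the orbifold Kähler form on $Z$ (whose local potentials at the singular points are merely continuous strictly plurisubharmonic) to a genuine smooth Kähler form on $\overline{X}$ in the stated class; this needs either the standard patching-and-regularization argument for Kähler spaces, or, as you note parenthetically, a reduction to the projective case by taking $[\kappa]$ to be a $\rho$-invariant ample class (available since $L^{\rho}$ is hyperbolic), which certainly suffices for the existential statement of the lemma.
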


\begin{lemma}[Kovalev-Lee \cite{KL11}, Lemma $4.2$]\label{lem:KL2}
$\overline{X}$ and $X=\overline{X}\setminus D$ are
simply-connected whenever $(r,a,\delta)\neq (10,10,0)$. 
\end{lemma}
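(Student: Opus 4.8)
The plan is to reduce both assertions to fundamental-group computations: one for the $\Z/2$-quotient $\overline{X}=\widetilde{X}/\widetilde{G}$, and one for the complement $X=\overline{X}\setminus D$. The whole argument hinges on the fact that the building block $X'=S\times\C P^1$ is simply-connected, and that the excluded case $(r,a,\delta)=(10,10,0)$ is exactly the case in which the fixed locus $S^\rho$, and hence $W=S^\rho\times\{0,\infty\}$, is empty; so under the stated hypothesis we always have a nonempty ramification locus to exploit.

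First I would treat $\overline{X}$. Since $S$ is a $K3$ surface it is simply-connected, and $\C P^1$ is simply-connected, so $X'=S\times\C P^1$ is simply-connected; as $\widetilde{X}\to X'$ is the blow-up along the smooth center $W$, and blowing up along a smooth submanifold leaves the fundamental group unchanged, $\widetilde{X}$ is simply-connected as well (as already noted in the text). Now $\overline{X}=\widetilde{X}/\widetilde{G}$ with $\widetilde{G}\cong\Z/2$, and by construction the nontrivial element of $\widetilde{G}$ fixes the exceptional divisor $E=\widetilde{\pi}^{-1}(W)$ pointwise. The key tool is Armstrong's theorem on orbit spaces: for a properly discontinuous action of a group on a simply-connected space, the fundamental group of the quotient is the acting group modulo the normal subgroup generated by the elements admitting a fixed point. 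Applying this to $\widetilde{G}$ acting on the simply-connected $\widetilde{X}$, and using that $E$ is nonempty precisely because $(r,a,\delta)\neq(10,10,0)$, the nontrivial involution has fixed points, so that distinguished normal subgroup is all of $\widetilde{G}$; hence $\pi_1(\overline{X})=\widetilde{G}/\widetilde{G}=1$.

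Next I would treat $X=\overline{X}\setminus D$. Since $D$ is a smooth connected divisor in the simply-connected manifold $\overline{X}$, the inclusion induces a surjection $\pi_1(X)\twoheadrightarrow\pi_1(\overline{X})=1$ whose kernel is the normal closure of a meridian loop $\mu$ encircling $D$; so it suffices to show that $\mu$ is null-homotopic in $X$. To exhibit a disk bounding $\mu$, I would produce a rational curve in $\overline{X}$ meeting $D$ transversally in a single point. Let $z_0\in\C P^1\setminus\{0,\infty\}$ be the chosen non-fixed point, so $D=f(S\times\{z_0\})$, and pick a fixed point $s\in S^\rho$ (available exactly because $(r,a,\delta)\neq(10,10,0)$). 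Then $\rho(s)=s$, so $G=\langle\rho\times\sigma\rangle$ acts on the fiber $\{s\}\times\C P^1$ by $z\mapsto -z$, and its image $\Gamma_s$ in $Z$ is $\C P^1/\sigma\cong\C P^1$. Because $\sigma$ identifies $z_0$ with $-z_0$, the two points of $\{s\}\times\C P^1$ lying over $D$ are glued, so $\Gamma_s$ meets $D$ in the single point $p=f(s,z_0)$, which lies away from the singular locus of $Z$; its proper transform $\widetilde{\Gamma}_s\subset\overline{X}$ is again a $\C P^1$ meeting $D$ transversally at the one point $p$. Consequently $\widetilde{\Gamma}_s\setminus D\cong\C P^1\setminus\{p\}\cong\C$ is contractible, and $\mu$ may be realized as a small loop around $p$ inside $\widetilde{\Gamma}_s$, which therefore bounds a disk in $X$. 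Hence $\mu=1$ in $\pi_1(X)$, and $X$ is simply-connected.

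The step I expect to be the main obstacle is the passage to the quotient by $\widetilde{G}$: the action is not free (it fixes $E$), so one cannot read off $\pi_1(\overline{X})$ from a covering-space argument, and Armstrong's theorem is exactly what is needed to show that the nontrivial isotropy kills, rather than contributes to, the fundamental group. A secondary technical point is checking that $\Gamma_s$ really meets $D$ in a single transverse point and that taking the proper transform under the crepant resolution $\overline{\pi}$ preserves this intersection, together with the bookkeeping that ties the availability of a fixed point $s\in S^\rho$ (and of the branch divisor $E$) precisely to the hypothesis $(r,a,\delta)\neq(10,10,0)$.
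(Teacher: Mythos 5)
The paper does not actually prove this lemma: it is quoted verbatim from Kovalev--Lee \cite{KL11}, Lemma 4.2, and (unlike the subsequent Lemma \ref{lem:anticano}) no argument is supplied, so there is no in-paper proof to compare yours against. Judged on its own, your argument is correct and complete in outline, and it uses exactly the structural facts the paper records (simple connectivity of $X'$ and of the blow-up $\widetilde{X}$, the pointwise fixing of $\widetilde{\pi}^{-1}(W)$ by $\widetilde{G}$, $D\cap\mathrm{Sing}(Z)=\emptyset$, and the equivalence of $W\neq\emptyset$ with $(r,a,\delta)\neq(10,10,0)$). Armstrong's theorem is the right tool for $\pi_1(\overline{X})$, and the meridian/rational-curve argument for $\pi_1(X)$ is sound: since $D$ is a connected smooth divisor in the simply-connected $\overline{X}$, the group $\pi_1(X)$ is the normal closure of one meridian, and your curve $\widetilde{\Gamma}_s$ kills it.

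The one step you should not wave through is the claim that the proper transform $\widetilde{\Gamma}_s$ is ``again a $\C P^1$''. The curve $\Gamma_s=f(\{s\}\times\C P^1)$ passes through $\mathrm{Sing}(Z)$ at the two points $f(s,0)$ and $f(s,\infty)$, so a priori its proper transform under $\overline{\pi}$ could acquire singularities, and your conclusion that $\widetilde{\Gamma}_s$ minus a small disc around $p$ is a disc (equivalently, that $\widetilde{\Gamma}_s\setminus\{p\}$ is simply connected) genuinely uses smoothness: an irreducible rational curve with a node would not do. The point is easily checked but should be said: at $(s,0)$ the differential of $\rho\times\sigma$ has eigenvalues $(+1,-1,-1)$, so $Z$ is locally $\C\times(\C^2/\pm 1)$, the image of the fibre direction is the smooth curve $\{x=y=0\}$ in $\mathrm{Spec}\,\C[x,y,z]/(xz-y^2)$, and its proper transform in the crepant (minimal) resolution is smooth and meets the exceptional $(-2)$-curve transversally. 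With that inserted, $\widetilde{\Gamma}_s\cong\C P^1$, it meets $D$ only at the one point $p$ (as $\overline{\pi}$ is an isomorphism near $D$), and your argument closes. The bookkeeping tying everything to $(r,a,\delta)\neq(10,10,0)$ via $S^\rho\neq\emptyset$ is exactly as in the paper's discussion preceding the lemma.
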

Although the following lemma is also stated in \cite{KL11}, p.$202$ without a proof, we will prove it here for the reader's convenience.
\begin{lemma}\label{lem:anticano}
$D$ is an anticanonical divisor on $\overline X$.
\end{lemma}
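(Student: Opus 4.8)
The plan is to show that $D$ is an anticanonical divisor on $\overline{X}$ by tracking the canonical bundle through the two birational/quotient operations relating $\overline{X}$ to the product $X'=S\times\mathbb{C}P^1$. First I would compute the canonical bundle of $X'$ via the adjunction/product formula: since $S$ is a $K3$ surface, $K_S\cong\mathcal{O}_S$, and $K_{\mathbb{C}P^1}\cong\mathcal{O}(-2)$, so $K_{X'}\cong \mathrm{pr}_1^*K_S\otimes\mathrm{pr}_2^*K_{\mathbb{C}P^1}\cong\mathrm{pr}_2^*\mathcal{O}(-2)$. For a fixed non-fixed point $z\in\mathbb{C}P^1\setminus\{0,\infty\}$, the divisor $D'=S\times\{z\}$ satisfies $[D']=\mathrm{pr}_2^*\mathcal{O}(1)$, so on $X'$ one has $-K_{X'}\cong 2[D']$, or equivalently $D'$ is half-anticanonical. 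To get a genuine anticanonical divisor on $X'$ one should instead use $D'_0+D'_\infty=S\times\{0,\infty\}\in |-K_{X'}|$; the two reference fibers over $0$ and $\infty$ together form the anticanonical divisor. The point of the quotient is that these two fibers get identified, so that the single image divisor $D$ becomes anticanonical downstairs.

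The key step is then to follow $-K$ through the quotient map $f:X'\to Z=X'/G$ and the crepant resolution $\overline{\pi}:\overline{X}\dashrightarrow Z$. The group $G$ is generated by $\rho\times\sigma$, where $\sigma$ interchanges $0$ and $\infty$ on $\mathbb{C}P^1$ (in the coordinate $z\mapsto -z$ the fixed points of $\sigma$ are $0$ and $\infty$, and I would double-check exactly which fibers are swapped versus fixed, since that controls whether the two reference fibers are identified). Since $\rho^*\Omega_S=-\Omega_S$ and $\sigma^*(\text{generator of }H^{2,0})$ picks up a sign as well, the induced action on $K_{X'}$ is chosen precisely so that the holomorphic volume form $\Omega_{X'}$ is $G$-\emph{invariant}; this is the Gorenstein/$\mathrm{SL}$-condition $G_x\subset\mathrm{SL}(T_xX')$ already invoked in the text, which is exactly the statement that $G$ acts trivially on $K_{X'}$ near the fixed locus, so the canonical divisor descends. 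Consequently the canonical (Weil) divisor class is $G$-equivariant and $K_{X'}$ descends to the canonical sheaf $K_Z$ of the quotient, with $D'_0+D'_\infty$ mapping to $2D$ (or to $D$, depending on the identification) representing $-K_Z$.

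Finally I would use the crepant property of the resolution $\overline{\pi}:\overline{X}\dashrightarrow Z$. By definition of crepant, $K_{\overline{X}}=\overline{\pi}^*K_Z$, i.e. the resolution introduces no discrepancy along the exceptional locus. Since $D$ (as the proper transform of the image of $D'$) is disjoint from, or meets transversally, the exceptional divisors of $\overline{\pi}$ in a way that does not change its class relative to $-K$, pulling back the relation $D\sim -K_Z$ on $Z$ gives $D\sim -K_{\overline{X}}$ on $\overline{X}$, as desired. The main obstacle I anticipate is bookkeeping the factor of $2$ and the sign: the anticanonical divisor of $X'$ is $2[\text{fiber}]$, and one must verify that after the $\mathbb{Z}_2$-quotient the image $D$ of a \emph{single} fiber (not two) is genuinely anticanonical rather than half-anticanonical. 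This is resolved precisely because $\sigma$ identifies the two ramification fibers over its fixed points, so that the class $2[D']$ upstairs becomes $[D]$ downstairs; verifying this identification carefully, together with checking that $D$ avoids the discrepancy-free exceptional locus so crepancy transfers the linear equivalence cleanly, is where the real care is needed.
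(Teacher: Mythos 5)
Your overall strategy --- compute $-K_{X'}=2[D']$ on $X'=S\times\C P^1$, descend through the quotient $f:X'\to Z$, then pull back through the crepant resolution --- is the same as the paper's, and the first and last steps are essentially right. But the key middle step is garbled. The involution $\sigma(z)=-z$ does \emph{not} interchange $0$ and $\infty$: these are its fixed points, and the fibers $S\times\{0\}$ and $S\times\{\infty\}$ are each preserved setwise (they map onto the singular fibers of $Z$, not onto each other). So your proposed mechanism --- ``$\sigma$ identifies the two ramification fibers over its fixed points, so the class $2[D']$ upstairs becomes $[D]$ downstairs'' --- does not work. The correct mechanism, and the one the paper uses, is that $D$ is the image of a single fiber $D'=S\times\{z\}$ over a \emph{non-fixed} point $z$, so $f^{-1}(D)=S\times\{z\}\,\cup\,S\times\{-z\}$ consists of two fibers, each linearly equivalent to $D'$; hence $f^*[D]\cong[2D']$ and $f^*c_1(K_Z\otimes[D])=c_1(K_{X'}\otimes[2D'])=0$. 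Since $H^2(Z,\Z)$ is the $G$-invariant part of $H^2(X',\Z)$, the map $f^*$ is injective on $H^2$, which forces $c_1(K_Z\otimes[D])=0$. Your hedging about ``which fibers are swapped versus fixed'' is exactly the point at which the argument needs to be pinned down, and the version you commit to is the wrong one.

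There is a second, smaller gap at the end. You conclude with a linear equivalence $D\sim-K_{\overline{X}}$, but the descent argument as run through cohomology only yields vanishing of the first Chern class ($f^*$ need not be injective on Picard groups for a quotient map, so one cannot directly descend the linear equivalence). The paper closes this by the exponential sequence: since $\overline{X}$ is a simply-connected compact K\"ahler threefold, $H^1(\overline{X},\mathcal{O}_{\overline{X}})=0$, so $c_1$ is injective on $H^1(\overline{X},\mathcal{O}^*_{\overline{X}})$ and $c_1(K_{\overline{X}}\otimes[D])=0$ upgrades to $K_{\overline{X}}\otimes[D]\cong\mathcal{O}_{\overline{X}}$. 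Finally, note that $D\cap\mathrm{Sing}(Z)=\emptyset$ outright (again because $z$ is a non-fixed point of $\sigma$), so no transversality discussion is needed when pulling back through the crepant resolution.
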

\begin{proof}
To begin with, we consider the divisor $D'= S\times \set{z}$ on $X'=S\times \C P^1$, where $z\in \C P^1\setminus \set{0, \infty}$. Let $p_1: X'\longrightarrow S$ and $p_2: X'\longrightarrow \C P^1$ be the canonical projections. Then we have the isomorphism
\[
K_{X'}\cong p_1^*K_S\otimes p_2^*K_{\C P^1}\cong p_2^*\mathcal O_{\C P^1}(-2),
\]
where we used $K_S\cong \mathcal O_S$ for the second isomorphism. Similarly, we conclude that
\[
[D']\cong p_2^*[z]\cong p_2^*\mathcal O_{\C P^1}(1).
\] 
This yields 
\begin{equation*}\label{eq:div1}
K_{X'}\otimes [2D']\cong \mathcal O_{X'}
\end{equation*}
and hence $c_1(K_{X'}\otimes [2D'])=0$. Since $H^2(Z,\Z )$ is the $G$-invariant part of $H^2(X',\Z )$, the pullback map
$f^*: H^2(Z,\Z )\longrightarrow H^2(X',\Z )$ is injective. Thus,
\[
f^* c_1(K_Z\otimes [D])=c_1(K_{X'}\otimes [2D'])=0
\]
implies $c_1(K_Z\otimes [D])=0$. We remark that
\begin{equation}\label{eq:div2}
D\cap {\rm Sing}(Z)=\emptyset
\end{equation}
because $z\in \C P^1$ is a non-fixed point of $\sigma$. Since $\overline {\pi}$ is a crepant resolution, we have
\[
\overline{\pi}^* K_Z\cong K_{\overline {X}} \qquad \text{and} \qquad \overline{\pi}^*[D] \cong [D]
\]
by (\ref{eq:div2}). Hence $c_1(K_Z\otimes [D])=0$ implies 
\[
c_1(K_{\overline{X}}\otimes [D])=c_1(\overline{\pi}^* K_Z\otimes \overline{\pi}^* [D])=\overline{\pi}^* c_1(K_Z\otimes [D])=0.
\]

Now consider the long exact sequence
\begin{equation}\label{eq:first_Chern}
\xymatrix{
\cdots\ar[r]&H^1(\overline{X},\mathcal{O}_{\overline{X}})\ar[r]
&H^1(\overline{X},\mathcal{O}^*_{\overline{X}})\ar[r]^-{c_1}&H^2(\overline{X},\Z )\ar[r]&\cdots .
}
\end{equation}
It follows from Lemmas \ref{lem:KL1} and \ref{lem:KL2} that 
$H^1(\overline{X},\mathcal{O}_{\overline{X}})\cong H^{0,1}(\overline{X})=0$.
Thus the map $c_1$ in \eqref{eq:first_Chern} 
is injective and so $c_1(K_{\overline{X}}\otimes [D])=0$ implies 
$K_{\overline{X}}\otimes [D]\cong\mathcal{O}_{\overline{X}}$.
Hence $D$ is an anticanonical divisor on $\overline{X}$.
\end{proof}

Therefore the above constructed pair $(\overline X,D)$ is an
admissible pair, which is said to be of \textit{non-symplectic type}
except the case of  $(r,a,\delta)=(10,10,0)$. 
In order to show the main result Proposition \ref{prop:betti non-symp} in this subsection, we require the
following. 
\begin{proposition}[Kovalev-Lee \cite{KL11}, Proposition $4.3$]\label{prop:KL}\qquad
\begin{enumerate}
\item[(i)] $h^{1,1}(\overline{X})=b^2(\overline{X})=3+2r-a$ \quad and \quad $h^{1,2}(\overline{X})=\frac{1}{2}b^3(\overline{X})=22-r-a$.
\item[(ii)] For the restriction map $\iota' : H^2 (\overline{X}, \R) \longrightarrow H^2(D,\R)$ given by
\begin{equation}\label{map:restrict}
\iota' : H^2 (\overline{X}, \R) \longrightarrow H^2(D,\R), \qquad [\omega] \longmapsto [\restrict{\omega}{D}],
\end{equation}
we have $\dim_{\R} \Image \iota' =r$.
\end{enumerate}
\end{proposition}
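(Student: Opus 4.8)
The plan is to compute the full Hodge decomposition of $H^*(\overline{X},\C)$ directly from the description of $\overline{X}$ recorded above, in the form $\overline{X}=\widetilde{X}/\widetilde{G}$, where $\widetilde{\pi}:\widetilde{X}\to X'=S\times\C P^1$ is the blow-up along the fixed locus $W=(X')^G$. Since $\widetilde{G}$ is finite and $\overline{X}$ is smooth, the transfer argument gives $H^*(\overline{X},\R)\cong H^*(\widetilde{X},\R)^{\widetilde{G}}$, and the standard blow-up formula for the codimension-$2$ centre $W$ reads $H^{p,q}(\widetilde{X})\cong \widetilde{\pi}^*H^{p,q}(X')\oplus H^{p-1,q-1}(W)$ as Hodge structures. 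The crucial geometric input is that the lifted involution acts as $-1$ on the rank-$2$ normal bundle $N_{W/X'}$, hence \emph{trivially} on its projectivization $E=\widetilde{\pi}^{-1}(W)$; therefore $\widetilde{G}$ acts trivially on the exceptional summand, and taking invariants yields
\[
H^{p,q}(\overline{X})\cong \bigl(H^{p,q}(X')\bigr)^{G}\oplus H^{p-1,q-1}(W).
\]
Everything then reduces to computing the $G$-invariants of $H^{p,q}(S\times\C P^1)$ and the Hodge numbers of $W$.

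First I would record the two ingredients. Since $\sigma$ is a holomorphic automorphism of $\C P^1$ it acts trivially on $H^*(\C P^1)$, so the nontrivial element of $G$ acts on $H^*(X')$ as $\rho^*\otimes\sigma^*=\rho^*\otimes\mathrm{id}$, and by K\"unneth $\bigl(H^{p,q}(X')\bigr)^{G}=\bigoplus_{a+c=p,\,b+d=q}\bigl(H^{a,b}(S)\bigr)^{\rho^*}\otimes H^{c,d}(\C P^1)$. As $\rho^*\Omega_S=-\Omega_S$, the invariant part of $H^2(S)$ is exactly the $\rho^*$-fixed $(1,1)$-part, of dimension $r=\rank L^{\rho}$, while $H^{2,0}(S)\oplus H^{0,2}(S)$ is anti-invariant. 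For $W$, the fixed locus of $\sigma$ is $\{0,\infty\}$, so $W=\mathrm{Fix}(\rho)\times\{0,\infty\}$; by Nikulin's description $\mathrm{Fix}(\rho)=C_g\sqcup\coprod_{i=1}^{k}\C P^1$ with $g=\tfrac12(22-r-a)$ and $k=\tfrac12(r-a)$ (consistent with the Lefschetz number $\chi(\mathrm{Fix}(\rho))=2r-20$), whence $h^{0,0}(W)=2(1+k)$ and $h^{1,0}(W)=h^{0,1}(W)=2g$. Feeding these into the displayed isomorphism gives $h^{2,0}(\overline{X})=\bigl(H^{2,0}(X')\bigr)^{G}=0$, since $\Omega_S\otimes 1$ is anti-invariant and $W$ contributes nothing in bidegree $(1,-1)$. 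Hence $h^{1,1}(\overline{X})=b^2(\overline{X})=(r+1)+2(1+k)=3+2r-a$, and likewise $h^{2,1}(\overline{X})=h^{1,0}(W)=2g=22-r-a$ together with $h^{3,0}(\overline{X})=0$, so $\tfrac12 b^3(\overline{X})=h^{1,2}(\overline{X})=22-r-a$. This proves (i).

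For (ii) I would track the restriction map $\iota'=j^*$ for $j:D\hookrightarrow\overline{X}$ through the same decomposition. Because $z\in\C P^1\setminus\{0,\infty\}$ is a non-fixed point, $D$ is disjoint from $\mathrm{Sing}(Z)$ and hence from the exceptional divisor; thus every class in the summand $H^{0,0}(W)$, being representable by a form supported near the exceptional locus, restricts to $0$ on $D$. On the other summand, $\widetilde{f}^*$ identifies a class $\bar\beta$ coming from $\beta\in\bigl(H^2(X')\bigr)^{G}$ with $\widetilde{\pi}^*\beta$, and since $\widetilde{\pi},\widetilde{f}$ are isomorphisms near the copy of $D'=S\times\{z\}$, the class $\iota'(\bar\beta)$ equals the restriction of $\beta$ to $D'$ under $D\cong D'\cong S$. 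The restriction $H^2(X')\to H^2(D')=H^2(S)$ is the identity on $H^2(S)\otimes H^0(\C P^1)$ and kills $H^0(S)\otimes H^2(\C P^1)$, because the pullback of a $2$-form on $\C P^1$ to the slice $S\times\{z\}$ vanishes. Hence $\Image\iota'$ is the image of $\bigl(H^2(S)\bigr)^{\rho^*}\otimes 1$, namely the $\rho$-invariant subspace $H^2(S)^{\rho^*}\subset H^2(D)$, of dimension $r$.

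The main obstacle is the foundational isomorphism in the first paragraph with its correct $\widetilde{G}$-action: one must verify carefully that the lifted involution fixes the exceptional divisor pointwise, so that the \emph{entire} summand $H^{p-1,q-1}(W)$ survives in the invariants, and that the blow-up splitting respects the Hodge filtration. The remaining computations are bookkeeping, the only external inputs being Nikulin's precise structure of $\mathrm{Fix}(\rho)$ and the count $g=\tfrac12(22-r-a)$, $k=\tfrac12(r-a)$; one should also note that the excluded values $(r,a,\delta)=(10,10,0)$ (empty fixed locus) and the exceptional two-elliptic-curve case do not occur here, which is precisely why the non-symplectic type excludes $(10,10,0)$.
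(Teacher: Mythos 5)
Your argument is correct, and it is worth noting that the paper itself offers no proof of this proposition: it is quoted verbatim from Kovalev--Lee \cite{KL11}, Proposition 4.3, whose proof runs along exactly the lines you reconstruct (invariant cohomology of the blow-up $\widetilde{X}$ of $S\times\C P^1$ along $W$, with the exceptional summand surviving in full because the lifted involution acts as $-1$ on $N_{W/X'}$ and hence fixes $E$ pointwise). Your bookkeeping checks out: $(H^{1,1}(X'))^G$ has dimension $r+1$, the $2(1+k)=2+r-a$ exceptional classes give $b^2(\overline{X})=3+2r-a$, the $2g=22-r-a$ classes from $H^{1,0}(W)$ give $h^{2,1}$, and the restriction to $D\cong S\times\{z\}$ kills both the exceptional classes and $H^0(S)\otimes H^2(\C P^1)$, leaving exactly $H^2(S)^{\rho}$ of dimension $r$.

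One small correction to your closing remark: the exceptional Nikulin case $(r,a,\delta)=(10,8,0)$, where $\mathrm{Fix}(\rho)$ is two disjoint elliptic curves rather than $C_g\sqcup\coprod_{i=1}^k\C P^1$, is \emph{not} excluded here --- it appears in Table 6.2 (No.\ 56). This does not damage the proof, because the only inputs you use are $h^{0,0}(\mathrm{Fix}(\rho))=1+k=\tfrac12(r-a)+1$ and $h^{1,0}(\mathrm{Fix}(\rho))=g=\tfrac12(22-r-a)$, and for $(10,8,0)$ two elliptic curves give $2$ components and total genus $2$, matching both formulas. You should simply state that the Betti numbers of the fixed locus obey the same formulas in that case, rather than asserting the case does not arise. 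Only $(10,10,0)$ (empty fixed locus, Enriques quotient) is genuinely excluded.
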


\begin{proposition}\label{prop:betti non-symp}
Let $(S, \rho)$ be a $K3$ surface with a non-symplectic involution $\rho$ which is determined by a $K3$ invariant
$(r,a,\delta)$ up to a deformation. Let $(\overline{X},D)$ be the
admissible pair of non-symplectic type obtained in the above
construction from $(S,\rho)$. Let $M$ denote the Calabi-Yau
threefold constructed from two copies of $(\overline{X},D)$ by
Corollary $\ref{cor:doubling}$. Then the number of possibilities of the $K3$ invariants is $75$.
The number of topological types of $(\overline{X}, D)$ which are distinguished by Betti or Hodge numbers is $64$.
Moreover, we have
\[
\begin{cases}
h^{1,1}(M)=b^2(M)=5+3r-2a, \\
h^{2,1}(M)=\frac{1}{2}b^3(M)-1=65-3r-2a . \\
\end{cases}
\]
\end{proposition}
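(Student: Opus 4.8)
The plan is to feed the formula of Proposition~\ref{prop:betti} with the cohomological data of a non-symplectic admissible pair recorded in Proposition~\ref{prop:KL}. Writing $X=\overline{X}\setminus D$, Proposition~\ref{prop:betti} expresses $b^2(M)$ and $b^3(M)$ in terms of $b^2(\overline{X})$, $b^3(\overline{X})$ and the single integer $d=\dim_\R\Ker\iota$, where $\iota:H^2(X,\R)\to H^2(D,\R)$ is the cylindrical restriction map of \eqref{map:iota}. Proposition~\ref{prop:KL}(i) supplies $b^2(\overline{X})=3+2r-a$ and $b^3(\overline{X})=2(22-r-a)$, so the whole computation reduces to pinning down $d$. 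Since $b^2(X)=b^2(\overline{X})-1=2+2r-a$ by \eqref{eq:KovLee}, and $d=b^2(X)-\dim_\R\Image\iota$ by \eqref{eq:dim iota}, it suffices to show that $\dim_\R\Image\iota=r$.

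The key step is therefore to identify $\Image\iota$ with the image of the honest restriction map $\iota':H^2(\overline{X},\R)\to H^2(D,\R)$ of \eqref{map:restrict}, whose dimension is $r$ by Proposition~\ref{prop:KL}(ii). I would run the Mayer--Vietoris sequence for the decomposition $\overline{X}=X\cup U$ from \eqref{eq:tub nbd1}, where $U$ is the tubular neighbourhood of $D$ with $X\cap U\simeq D\times S^1$. Because $D$ is a $K3$ surface we have $b^1(D)=0$, so the Künneth identification collapses to $H^2(D\times S^1,\R)\cong H^2(D,\R)$; under $H^2(U)\cong H^2(D)$ the restriction map $H^2(\overline{X})\to H^2(X)\oplus H^2(U)$ becomes the pair $(\mathrm{res}_X,\iota')$, while the next arrow is $\beta^2(\alpha,\gamma)=\iota(\alpha)-\gamma$. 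Exactness at $H^2(X)\oplus H^2(U)$ then forces $\iota'(\omega)=\iota(\mathrm{res}_X\omega)$ for every $\omega\in H^2(\overline{X})$, giving $\Image\iota'\subseteq\Image\iota$; conversely, for each $\alpha\in H^2(X)$ the pair $(\alpha,\iota(\alpha))$ lies in $\Ker\beta^2=\Image(\mathrm{res}_X,\iota')$, so $\iota(\alpha)\in\Image\iota'$. Hence $\Image\iota=\Image\iota'$ and $\dim_\R\Image\iota=r$, whence $d=b^2(X)-r=2+r-a$.

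Substituting $d=2+r-a$ together with the above values of $b^2(\overline X)$ and $b^3(\overline X)$ into \eqref{eq:betti} yields $b^2(M)=5+3r-2a$ and $b^3(M)=2(66-3r-2a)$. Since $M$ is a simply-connected Calabi-Yau threefold, its Hodge diamond satisfies $h^{1,1}(M)=b^2(M)$ and $b^3(M)=2(1+h^{2,1}(M))$, so I immediately read off $h^{1,1}(M)=5+3r-2a$ and $h^{2,1}(M)=\tfrac12 b^3(M)-1=65-3r-2a$, proving the displayed formulas.

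Finally, for the two counting assertions I would invoke Nikulin's classification quoted above: the deformation type of $(S,\rho)$, hence of $(\overline X,D)$, is governed by the triple $(r,a,\delta)$, and exactly $75$ such triples occur. As the Betti and Hodge numbers just computed depend only on $(r,a)$ and not on $\delta$, two admissible pairs are indistinguishable by these invariants precisely when they share $(r,a)$; discarding the non-admissible Enriques triple $(10,10,0)$ excluded by Lemma~\ref{lem:KL2} and tabulating the distinct values of $(r,a)$ realised in Nikulin's list yields $64$ topological types. I expect the genuine mathematical obstacle to be the cohomological identification $\dim_\R\Image\iota=r$ of the second paragraph, matching the open-manifold invariant $\iota$ against the compact restriction $\iota'$, whereas the counting reduces to a finite inspection of Nikulin's table.
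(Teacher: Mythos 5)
Your proposal is correct and follows essentially the same route as the paper: both reduce the whole computation to the value of $d=\dim_\R\Ker\iota$ and then substitute the data of Proposition \ref{prop:KL} into the formula of Proposition \ref{prop:betti}, obtaining $d=2+r-a$ and hence the stated Hodge numbers, with the counts $75$ and $64$ read off from Nikulin's classification. The only difference is that where the paper simply quotes the relation $\dim_\R\Ker\iota=\dim_\R\Ker\iota'-1$ from Kovalev--Lee, you derive the equivalent identity $\Image\iota=\Image\iota'$ directly via the Mayer--Vietoris sequence for $\overline{X}=X\cup U$, which is a correct, self-contained substitute for that citation.
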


\begin{proof}
Recall that we set $d=\dim_{\R} \Ker \iota$, where 
\[
\iota : H^2 (X, \R) \longrightarrow H^2(D,\R)
\]
is a homomorphism in (\ref{map:iota}). As in $(4.3)$ in \cite{KL11}, we have
\[
d=\dim_{\R}\Ker \iota = \dim_{\R}\Ker \iota' -1,
\]
where $\iota' : H^2 (\overline{X}, \R) \longrightarrow H^2(D,\R)$ is the restriction map defined in (\ref{map:restrict}).
Since $\dim_{\R} \Image \iota' =r$ by Proposition \ref{prop:KL} (ii), we conclude that
\begin{equation*}
d=b^2(\overline{X})-\dim_{\R}\Image \iota' -1 = h^{1,1}(\overline{X})-r-1.
\end{equation*}
Here we used the equality $h^{2,0}(\overline{X})=0$ given by Proposition $2.2$ in \cite{KL11}. Substituting this into (\ref{eq:betti}) in Proposition \ref{prop:betti}, we have
\begin{align}\label{eq:betti2}
\begin{split}
\begin{cases}
b^2(M)&=  2h^{1,1}(\overline X)-r-1,\\
b^3(M)&= 2(2h^{2,1}(\overline X)+22-r).
\end{cases}
\end{split}
\end{align}
In the above equation, we again used $h^{3,0}(\overline{X})=0$ by Proposition $2.2$ in \cite{KL11}. Now the result follows immediately from Proposition 
\ref{prop:KL} (i). Remark that our result is independent of the integer $\delta$.
\end{proof}

\begin{remark}\rm
We can also compute the Hodge numbers of the resulting Calabi-Yau threefolds using the Chen-Ruan orbifold cohomology.
See \cite{PR12} for more details. However, Prof.~Reidegeld pointed out in a private communication that there is another technical problem in the case of non-symplectic automorphisms of order $3\leqslant p \leqslant 19$. More precisely, the $K3$ divisors of the compact K\"ahler threefolds which they have constructed
in \cite{PR12} are in the $p/2$-multiple of the anticanonical class. This implies that a Ricci-flat K\"ahler form on $X=\overline{X}\setminus D$ is not asymptotically cylindrical but asymptotically conical. Therefore, their examples of admissible pairs are not applicable to our doubling construction. However, this problem does not affect the method of calculating the Hodge numbers of the resulting Calabi-Yau threefolds, and so an analogous argument of Proposition \ref{prop:betti non-symp}  will work.
\end{remark}

\newpage

\section{Appendix: The list of the resulting Calabi-Yau threefolds}
In this section, we list all Calabi-Yau threefolds obtained in
Corollary $\ref{cor:doubling}$. We have the following two choices for
constructing Calabi-Yau threefolds $M$:
\begin{enumerate}
\item[(a)] We shall use admissible pairs of \textit{Fano type}. From a Fano threefold $V$, we obtain
an admissible pair $(\overline{X},D)$ by Theorem \ref{th:ad Fano}.
According to the complete classification of nonsingular Fano threefolds \cite{I78,
MM81,MM03}, there are $105$ algebraic families with Picard number
$1\leqslant \rho(V)\leqslant 10$. Then the number of
distinct topological types of the resulting Calabi-Yau threefolds is $59$ (see
Table $6.1$, and also Figure $6.3$ where the resulting Calabi-Yau threefolds are plotted with symbol \scalebox{0.8}{$\times$}).

\item[(b)] We shall use admissible pairs of \textit{non-symplectic type}. 
Starting from a $K3$ surface $S$ with a non-symplectic involution $\rho$,
we obtain an admissible pair  $(\overline X, D)$ as in Section \ref{sec:non-symplectic}.
According to the classification result of $(S,\rho)$ due to Nikulin \cite{N79,N80,N83},
there are $74$ algebraic families. 
Then the number of distinct topological types of the resulting Calabi-Yau threefolds
is $64$. Of these Calabi-Yau threefolds, there is at least one new example
which is not diffeomorphic to the known ones (see Table $6.2$, and also Figure $6.3$ where the
resulting Calabi-Yau threefolds are plotted with symbols $\bullet$ and \,\scalebox{0.6}{$\blacksquare$}\,).
\end{enumerate}

\subsection{All possible Calabi-Yau threefolds from Fano type}
In Table $6.1$, we hereby list the details of the resulting Calabi-Yau
threefolds $M$ from admissible pairs of Fano type. These topological
invariants are computable by Proposition $\ref{prop:betti Fano
type}$, and further details are left to the reader. In the table
below, $\rho=\rho(V)$ denotes the Picard number of the Fano
threefold $V$, and $h^{1,1}=h^{1,1}(M),\; h^{2,1}=h^{2,1}(M)$ denote
the Hodge numbers.
$$ $$
\noindent\begin{center}
 \begin{tabular}{c|c|c|c|c}
 \multicolumn{5}{c}{\vspace{0.3cm}{\textit{Fano threefolds with $\rho=1$} }}                                   \\
 \vspace{-0.23cm}                                &   Label                                      &                    &                              &      \\
         \vspace{-0.23cm}   No.                 &                                         &  $-K_V^3$   &     $h^{1,2}(V)$          &  $(h^{1,1},h^{2,1})$    \\
                                                             &    in \cite{MM81}                      &                     &                            &     \\
 \hline
 \phantom{$1$}$1$                                                     &  $-$                                          &  $2$             & $52$              & $(2,128)$    \\
 \phantom{$1$}$2$                                                     &   $-$                                         &   $4$            & $30$             & $(2,86)$    \\
 \phantom{$1$}$3$                                                     &   $-$                                         &   $6$             & $20$             & $(2,68)$    \\
 \phantom{$1$}$4$                                                     &  $-$                                           &  $8$             & $14$             & $(2,58)$    \\
 \phantom{$1$}$5$                                                     &  $-$                                          &  $10$            & $10$              & $(2,52)$    \\
 \phantom{$1$}$6$                                                     &  $-$                                          &  $12$             & $7$              & $(2,48)$    \\
 \phantom{$1$}$7$                                                     &   $-$                                         &   $14$            & $5$             & $(2,46)$    \\
 \phantom{$1$}$8$                                                     &   $-$                                         &   $16$             & $3$             & $(2,44)$    \\
 \phantom{$1$}$9$                                                     &  $-$                                           &  $18$             & $2$             & $(2,44)$    \\
 $10$                                                     &  $-$                                          &  $22$            & $0$              & $(2,44)$    \\
 $11$                                                     &  $-$                                          &  $8$             & $21$              & $(2,72)$    \\
 $12$                                                     &   $-$                                         &   $16$            & $10$             & $(2,58)$    \\
 $13$                                                     &   $-$                                         &   $24$             & $5$             & $(2,56)$    \\
 $14$                                                     &  $-$                                           &  $32$             & $2$             & $(2,58)$    \\
 $15$                                                     &  $-$                                          &  $40$            & $0$              & $(2,62)$    \\
 $16$                                                     &  $-$                                          &  $54$             & $0$              & $(2,76)$    \\
 \vspace{2.4cm}
 \hspace{-0.1cm}$17$                                                     &   $-$                                         &   $64$            & $0$             & $(2,86)$    \\
  \end{tabular}\hfill
 \begin{tabular}{c|c|c|c|c}
 \multicolumn{5}{c}{\vspace{0.3cm}{\textit{Fano threefolds with $\rho=2$}} }                                   \\
 \vspace{-0.23cm}                                &   Label                                      &                    &                              &      \\
         \vspace{-0.23cm}   No.                 &                                         &  $-K_V^3$   &     $h^{1,2}(V)$          &  $(h^{1,1},h^{2,1})$    \\
                                                             &    in \cite{MM81}                      &                     &                            &     \\
 \hline
 $18$                                                     &  \phantom{$1$}$1$                                          &  $4$             & $22$              & $(3,69)$    \\
 $19$                                                     &   \phantom{$1$}$2$                                         &   $6$            & $20$             & $(3,67)$    \\
 $20$                                                     &   \phantom{$1$}$3$                                         &   $8$             & $11$             & $(3,51)$    \\
 $21$                                                     &  \phantom{$1$}$4$                                           &  $10$             & $10$            & $(3,51)$    \\
 $22$                                                     &  \phantom{$1$}$5$                                          &  $12$            & $6$              & $(3,45)$    \\
 $23$                                                     &  \phantom{$1$}$6$                                          &  $12$             & $9$              & $(3,51)$    \\
 $24$                                                     &   \phantom{$1$}$7$                                         &   $14$            & $5$             & $(3,45)$    \\
 $25$                                                     &   \phantom{$1$}$8$                                         &   $14$             & $9$             & $(3,53)$    \\
 $26$                                                     &  \phantom{$1$}$9$                                           &  $16$             & $5$             & $(3,47)$    \\
 $27$                                                     &  $10$                                          &  $16$            & $3$              & $(3,43)$    \\
 $28$                                                     &  $11$                                          &  $18$             & $5$              & $(3,49)$    \\
 $29$                                                     &   $12$                                         &   $20$            & $3$             & $(3,47)$    \\
 $30$                                                     &   $13$                                         &   $20$             & $2$             & $(3,45)$    \\
 $31$                                                     &  $14$                                           &  $20$             & $1$             & $(3,43)$    \\
 $32$                                                     &  $15$                                          &  $22$            & $4$              & $(3,51)$    \\
 $33$                                                     &  $16$                                          &  $22$             & $2$              & $(3,47)$    \\
 $34$                                                     &   $17$                                         &   $24$            & $1$             & $(3,47)$    \\
 $35$                                                     &   $18$                                         &   $24$            & $2$             & $(3,49)$    \\
 $36$                                                     &   $19$                                          &  $26$             & $2$              & $(3,51)$    \\
 $37$                                                     &   $20$                                         &   $26$            & $0$             & $(3,47)$    \\
 $38$                                                     &   $21$                                         &   $28$             & $0$             & $(3,49)$    \\
 $39$                                                     &  $22$                                           &  $30$             & $0$            & $(3,51)$    \\
\end{tabular}\hfill
\end{center}
\noindent\begin{center}
 \begin{tabular}{c|c|c|c|c}
 \vspace{-0.23cm}                                &   Label                                      &                    &                              &      \\
         \vspace{-0.23cm}   No.                 &                                         &  $-K_V^3$   &     $h^{1,2}(V)$          &  $(h^{1,1},h^{2,1})$    \\
                                                             &    in \cite{MM81}                      &                     &                            &     \\
 \hline
 $40$                                                     &  $23$                                          &  $30$            & $1$              & $(3,53)$    \\
 $41$                                                     &  $24$                                          &  $30$             & $0$              & $(3,51)$    \\
 $42$                                                     &   $25$                                         &   $32$            & $1$             & $(3,55)$    \\
 $43$                                                     &   $26$                                         &   $34$             & $0$             & $(3,55)$    \\
 $44$                                                     &  $27$                                           &  $38$             & $0$             & $(3,59)$    \\
 $45$                                                     &  $28$                                          &  $40$            & $1$              & $(3,63)$    \\
 $46$                                                     &  $29$                                          &  $40$             & $0$              & $(3,61)$    \\
 $47$                                                     &   $30$                                         &   $46$            & $0$             & $(3,67)$    \\
 $48$                                                     &   $31$                                         &   $46$             & $0$             & $(3,67)$    \\
 $49$                                                     &  $32$                                           &  $48$             & $0$             & $(3,69)$    \\
 $50$                                                     &  $33$                                          &  $54$            & $0$              & $(3,75)$    \\
 $51$                                                     &  $34$                                          &  $54$             & $0$              & $(3,75)$    \\
 $52$                                                     &   $35$                                         &   $56$            & $0$             & $(3,77)$    \\
 \vspace{0.5cm}
 \hspace{-0.1cm}\noindent$53$                                                     &   $36$                                         &   $62$            & $0$             & $(3,83)$    \\
  \multicolumn{5}{c}{\vspace{0.3cm}{\textit{Fano threefolds with $\rho=3$} }}                                   \\
 \vspace{-0.23cm}                                &   Label                                      &                    &                              &      \\
         \vspace{-0.23cm}   No.                 &                                         &  $-K_V^3$   &     $h^{1,2}(V)$          &  $(h^{1,1},h^{2,1})$    \\
                                                             &    in \cite{MM81}                      &                     &                            &     \\
 \hline
 $54$                                                     &  \phantom{$1$}$1$                                          &  $12$             & $8$              & $(4,48)$    \\
 $55$                                                     &   \phantom{$1$}$2$                                         &   $14$            & $3$             & $(4,40)$    \\
 $56$                                                     &   \phantom{$1$}$3$                                         &   $18$             & $3$             & $(4,44)$    \\
 $57$                                                     &  \phantom{$1$}$4$                                           &  $18$             & $2$             & $(4,42)$    \\
 $58$                                                     &  \phantom{$1$}$5$                                          &  $20$            & $0$              & $(4,40)$    \\
 $59$                                                     &  \phantom{$1$}$6$                                          &  $22$             & $1$              & $(4,44)$    \\
 $60$                                                     &   \phantom{$1$}$7$                                         &   $24$            & $1$             & $(4,46)$    \\
 $61$                                                     &  \phantom{$1$}$8$                                         &   $24$             & $0$             & $(4,44)$    \\
 $62$                                                     &  \phantom{$1$}$9$                                           &  $26$             & $3$             & $(4,52)$    \\
 $63$                                                     &  $10$                                          &  $26$            & $0$              & $(4,46)$    \\
 $64$                                                     &  $11$                                          &  $28$             & $1$              & $(4,50)$    \\
 $65$                                                     &   $12$                                         &   $28$            & $0$             & $(4,48)$    \\
 $66$                                                     &   $13$                                         &   $30$             & $0$             & $(4,50)$    \\
 $67$                                                     &  $14$                                           &  $32$             & $1$             & $(4,54)$    \\
 $68$                                                     &  $15$                                          &  $32$            & $0$              & $(4,52)$    \\
 $69$                                                     &  $16$                                          &  $34$             & $0$              & $(4,54)$    \\
 $70$                                                     &   $17$                                         &   $36$            & $0$             & $(4,56)$    \\
 $71$                                                     &  $18$                                          &  $36$             & $0$              & $(4,56)$    \\
 $72$                                                     &   $19$                                         &   $38$            & $0$             & $(4,58)$    \\
 $73$                                                     &   $20$                                         &   $38$             & $0$             & $(4,58)$    \\
 $74$                                                     &  $21$                                           &  $38$             & $0$             & $(4,58)$    \\
 $75$                                                     &  $22$                                          &  $40$            & $0$              & $(4,60)$    \\
 $76$                                                     &  $23$                                          &  $42$             & $0$              & $(4,62)$    \\
 $77$                                                     &   $24$                                         &   $42$            & $0$             & $(4,62)$    \\
 $78$                                                     &   $25$                                         &   $44$             & $0$             & $(4,64)$    \\
 $79$                                                     &  $26$                                           &  $46$             & $0$             & $(4,66)$    \\
 $80$                                                     &  $27$                                          &  $48$            & $0$              & $(4,68)$    \\
 $81$                                                     &  $28$                                          &  $48$             & $0$              & $(4,68)$    \\
 $82$                                                     &   $29$                                         &   $50$            & $0$             & $(4,70)$    \\
 $83$                                                     &   $30$                                         &   $50$             & $0$             & $(4,70)$    \\
 \vspace{0.5cm}
 \hspace{-0.1cm}$84$                                                     &  $31$                                           &  $52$             & $0$             & $(4,72)$    \\
 \end{tabular}\hfill
 \begin{tabular}{c|c|c|c|c}
 \multicolumn{5}{c}{\vspace{0.3cm}{\textit{Fano threefolds with $\rho=4$}} }                                   \\
 \vspace{-0.23cm}                                &   Label                                      &                    &                              &      \\
         \vspace{-0.23cm}   No.                 &                                         &  $-K_V^3$   &     $h^{1,2}(V)$          &  $(h^{1,1},h^{2,1})$    \\
                                                             &    in \cite{MM81}                      &                     &                            &     \\
 \hline
 $85$                                                     &  \phantom{$1$}$1$                                          &  $24$             & $1$              & $(5,45)$    \\
 $86$                                                     &   \phantom{$1$}$2$                                         &   $28$            & $1$             & $(5,49)$    \\
 $87$                                                     &   \phantom{$1$}$3$                                         &   $30$             & $0$             & $(5,49)$    \\
 $88$                                                     &  \phantom{$1$}$4$                                           &  $32$             & $0$            & $(5,51)$    \\
 $89$                                                     &  \phantom{$1$}$5$                                          &  $32$            & $0$              & $(5,51)$    \\
 $90$                                                     &  \phantom{$1$}$6$                                          &  $34$             & $0$              & $(5,53)$    \\
 $91$                                                     &   \phantom{$1$}$7$                                         &   $36$            & $0$             & $(5,55)$    \\
 $92$                                                     &   \phantom{$1$}$8$                                         &   $38$             & $0$             & $(5,57)$    \\
 $93$                                                     &  \phantom{$1$}$9$                                           &  $40$             & $0$             & $(5,59)$    \\
 $94$                                                     &  $10$                                          &  $42$            & $0$              & $(5,61)$    \\
 $95$                                                     &  $11$                                          &  $44$             & $0$              & $(5,63)$    \\
 $96$                                                     &   $12$                                         &   $46$            & $0$             & $(5,65)$    \\
 \vspace{0.5cm}
 \hspace{0.01cm}
 $97^*$                                     &   $-$                                               &   $26$             & $0$             & $(5,45)$    \\
 \multicolumn{5}{l}{$\ast)\:$ No. $97$ was erroneously omitted in \cite{MM81}.}   \\
 \multicolumn{5}{l}{See \cite{MM03} for the correct table.} \vspace{1.2cm}  \\
 \multicolumn{5}{c}{\vspace{0.3cm}{\textit{Fano threefolds with $\rho\geqslant 5$}} }                                   \\
 \vspace{-0.23cm}                                &                                         &                    &                              &      \\
         \vspace{-0.23cm}   No.                 &    $\rho$                                     &  $-K_V^3$   &     $h^{1,2}(V)$          &  $(h^{1,1},h^{2,1})$    \\
                                                             &                           &                     &                            &     \\
 \hline
 \phantom{$1$}$98$                                         &  $5$                                           &  $28$             & $0$             & $(6,46)$    \\
 \phantom{$1$}$99$                                                     &  $5$                                          &  $36$            & $0$              & $(6,54)$    \\
 \hspace{0.01cm}
 $100^{\dagger}$                                     &  $5$                                          &  $36$             & $0$              & $(6,54)$    \\
 $101$                                                     &   $6$                                         &   $30$            & $0$             & $(7,47)$    \\
 $102$                                                     &   $7$                                         &   $24$            & $0$             & $(8,40)$    \\
 $103$                                                     &   $8$                                          &  $18$             & $0$              & $(9,33)$    \\
 $104$                                                     &   $9$                                         &   $12$            & $0$             & $(10,26)$    \\
 \vspace{0.5cm}
 \hspace{-0.1cm}$105$                                                     &   $10$                                         &   $6$             & $0$             & $(11,19)$    \\
 \multicolumn{5}{l}{$\dagger)\:$ This Fano threefold is $\C P^1\times S_6$ where}   \\
  \multicolumn{5}{l}{$S_6$ is a del Pezzo surface of degree $6$.}
 \end{tabular}\hfill 
 Table $6.1$. The list of Calabi-Yau threefolds from Fano type
\end{center}

\subsection{All possible Calabi-Yau threefolds from non-symplectic type}\label{subset:non-symp}
In Table $6.2$, we hereby list the details of the resulting Calabi-Yau
threefolds from admissible pairs of non-symplectic type. These Hodge
numbers are also computable by Proposition $\ref{prop:betti
non-symp}$ and further details are left to the reader. In the table
below, there is at least {\it one} new example of Calabi-Yau
threefolds, which is listed as the boxed number $64$.  
We also list the number of the mirror partner for each resulting Calabi-Yau threefold in our construction.
See Discussion and  Section \ref{subsec: graphix} below for more details.
The symbol -- on the list means that the corresponding Calabi-Yau
threefold has no mirror partner in this construction.
\vspace{0.3cm}
\noindent\begin{center}
 \begin{tabular}{c|c|c|c}
 \multicolumn{4}{c}{\vspace{0.3cm}{\textit{$K3$ surfaces with non-symplectic involutions} }}                                   \\
 \vspace{-0.23cm}                                &   $K3$ invariants                          &                    &       Mirror          \\
         \vspace{-0.23cm}   No.                 &                                          &  $(h^{1,1},h^{2,1})$ &                     \\
                                                             &    $(r,a,\delta)$                        &                           &     partner      \\
 \hline
\phantom{$1$}$1$                                                     &  $(2,0,0)$        &  $(11, 59)$     &  \phantom{$1$}$3$   \\
 \phantom{$1$}$2$                                                     &  $(10,0,0)$        &  $(35,35)$     & \phantom{$1$}$2$    \\
 \phantom{$1$}$3$                                                     &  $(18,0,0)$        &  $(59,11)$      & \phantom{$1$}$1$  \\
 \phantom{$1$}$4$                                                     &  $(1,1,1)$        &  \;$(6,60)$          & \phantom{$1$}$9$  \\
 \phantom{$1$}$5$                                                     &  $(3,1,1)$        &  $(12,54)$        & \phantom{$1$}$8$  \\
 \phantom{$1$}$6$                                                     &  $(9,1,1)$        &  $(30,36)$        & \phantom{$1$}$7$ \\
 \phantom{$1$}$7$                                                     &  $(11,1,1)$        &  $(36,30)$         & \phantom{$1$}$6$  \\
 \phantom{$1$}$8$                                                     &  $(17,1,1)$        &  $(54,12)$         & \phantom{$1$}$5$  \\
 \phantom{$1$}$9$                                                     &  $(19,1,1)$        &  $(60,6)$        & \phantom{$1$}$4$  \\
 $10$                                                     &  $(2,2,0 \:\text{or}\: 1)$        &  $(7,55)$     & $18$  \\
 $11$                                                     &  $(4,2,1)$        &  $(13,49)$           & $17$  \\
 $12$                                                     &  $(6,2,0)$        &  $(19,43)$          & $16$  \\
 $13$                                                     &  $(8,2,0)$        &  $(25,37)$         & $15$  \\
 $14$                                                     &  $(10,2,0 \:\text{or}\: 1)$        &  $(31,31)$  &  $14$  \\
 $15$                                                     &  $(12,2,1)$        &  $(37,25)$  &  $13$  \\
 $16$                                                     &  $(14,2,0)$        &  $(43,19)$  &  $12$  \\
 $17$                                                     &  $(16,2,1)$        &  $(49,13)$  &  $11$  \\
$18$                                                     &  $(18,2,0 \:\text{or}\: 1)$        &  $(55,7)$   & $10$  \\
 $19$                                                     &  $(20,2,1)$        &  $(61,1)$     & --  \\
 $20$                                                     &  $(3,3,1)$        &  $(8,50)$      & $27$  \\
 $21$                                                     &  $(5,3,1)$        &  $(14,44)$      & $26$  \\
 $22$                                                     &  $(7,3,1)$        &  $(20,38)$       & $25$  \\
 $23$                                                     &  $(9,3,1)$        &  $(26,32)$       & $24$  \\
 $24$                                                     &  $(11,3,1)$        &  $(32,26)$      & $23$  \\
 $25$                                                     &  $(13,3,1)$        &  $(38,20)$      & $22$  \\
 $26$                                                     &  $(15,3,1)$        &  $(44,14)$       & $21$  \\
 $27$                                                     &  $(17,3,1)$        &  $(50,8)$         & $20$  \\
 $28$                                                     &  $(19,3,1)$        &  $(56,2)$        & --  \\
 $29$                                                     &  $(4,4,1)$        &  $(9,45)$         & $35$  \\
 $30$                                                     &  $(6,4,0 \:\text{or}\: 1)$        &  $(15,39)$      & $34$  \\
 $31$                                                     &  $(8,4,1)$        &  $(21,33)$         & $33$  \\
 $32$                                                     &  $(10,4,0 \:\text{or}\: 1)$        &  $(27,27)$       & $32$  \\
 $33$                                                     &  $(12,4,1)$        &  $(33,21)$         & $31$  \\
 $34$                                                     &  $(14,4,0 \:\text{or}\: 1)$        &  $(39,15)$       & $30$  \\
 $35$                                                     &  $(16,4,1)$        &  $(45,9)$      & $29$  \\
   \end{tabular}\hfill
\begin{tabular}{c|c|c|c}
 \vspace{-0.23cm}                                &   $K3$ invariants                          &                    &       Mirror          \\
         \vspace{-0.23cm}   No.                 &                                          &  $(h^{1,1},h^{2,1})$ &                     \\
                                                             &    $(r,a,\delta)$                        &                           &     partner      \\
 \hline
 $36$                                                     &  $(18,4,0 \:\text{or}\: 1)$        &  $(51,3)$        & --  \\
 $37$                                                     &  $(5,5,1)$        &  $(10,40)$        & $42$  \\
 $38$                                                     &  $(7,5,1)$        &  $(16,34)$        & $41$  \\
 $39$                                                     &  $(9,5,1)$        &  $(22,28)$        & $40$  \\
$40$                                                     &  $(11,5,1)$        &  $(28,22)$        & $39$  \\
$41$                                                     &  $(13,5,1)$        &  $(34,16)$     &  $38$  \\
 $42$                                                     &  $(15,5,1)$        &  $(40,10)$     & $37$  \\
 $43$                                                     &  $(17,5,1)$        &  $(46,4)$       & --  \\
 $44$                                                     &  $(6,6,1)$        &  $(11,35)$        & $48$  \\
 $45$                                                     &  $(8,6,1)$        &  $(17,29)$        & $47$  \\
 $46$                                                     &  $(10,6,0\:\text{or}\: 1)$        &  $(23,23)$    & $46$  \\
 $47$                                                     &  $(12,6,1)$        &  $(29,17)$   & $45$  \\
 $48$                                                     &  $(14,6,0\:\text{or}\: 1)$        &  $(35,11)$       & $44$  \\
 $49$                                                     &  $(16,6,1)$        &  $(41,5)$         & --  \\
 $50$                                                     &  $(7,7,1)$        &  $(12,30)$        & $53$  \\
 $51$                                                     &  $(9,7,1)$        &  $(18,24)$        & $52$  \\
 $52$                                                     &  $(11,7,1)$        &  $(24,18)$       & $51$  \\
 $53$                                                     &  $(13,7,1)$        &  $(30,12)$       & $50$  \\
 $54$                                                     &  $(15,7,1)$        &  $(36,6)$         & --  \\
 $55$                                                     &  $(8,8,1)$        &  $(13,25)$         & $57$  \\
 $56$                                                     &  $(10,8,0\:\text{or}\: 1)$        &  $(19,19)$         & $56$  \\
 $57$                                                     &  $(12,8,1)$        &  $(25,13)$       & $55$  \\
 $58$                                                     &  $(14,8,1)$        &  $(31,7)$        &  --  \\
 $59$                                                     &  $(9,9,1)$        &  $(14,20)$       & $60$  \\
 $60$                                                     &  $(11,9,1)$        &  $(20,14)$      & $59$  \\
 $61$                                                     &  $(13,9,1)$        &  $(26,8)$        & --  \\
 $62$                                                     &  $(10,10,1)^{\natural}$     &  $(15,15)$       & $62$  \\
 $63$                                                     &  $(12,10,1)$        &  $(21,9)$       & --  \\
 \vspace{0.5cm}                                                   
\hspace{-0.1cm}\fbox{$64$}&  $(11,11,1)$        &  $(16,10)$       & --  \\
 \multicolumn{4}{l}{$\natural)$  $\;(r,a,\delta)\neq(10,10,0)$ from assumption.} \vspace{1.00cm}  \\
 \end{tabular}
 \end{center}
\begin{center}\vspace{0.4cm}
Table $6.2$. The list of Calabi-Yau threefolds from non-symplectic type
\end{center}


\pagebreak

\noindent\textit{Discussion.}
 \noindent The method of constructing Calabi-Yau threefolds and their mirrors from $K3$ surfaces were originally 
 investigated by Borcea and Voisin 
 \cite{BV97}, Section $4$, using algebraic geometry. Our doubling construction is a differential-geometric 
 interpretation of the Borcea-Voisin construction.
 Observe that Proposition \ref{prop:betti non-symp} gives the condition that two Calabi-Yau
threefolds $M$ and $M'$ should be a mirror pair, i.e.,
$h^{p,q}(M)=h^{3-p,q}(M')$ for all $p,q\in\set{0,1,2,3}$.  
Let $M$ (resp. $M'$) be a Calabi-Yau threefold
from admissible pairs of non-symplectic type with respect to $K3$
invariants $(r,a,\delta)$ (resp. $(r',a', \delta')$). Then
$h^{p,q}(M)=h^{3-p,q}(M')$ implies $r+r'=20,\; a=a'$ by Proposition
\ref{prop:betti non-symp}. These relations coincide with $(11)$ in \cite{BV97}, p.723. From these equalities, we can find mirror
pairs in our examples of Calabi-Yau threefolds. In particular $M$
is automatically self-mirror when $r=10$.
Thus we find 24 mirror pairs and 6 self-mirror Calabi-Yau threefolds
in our examples.

\subsection{Graphical chart of our examples}\label{subsec: graphix}
Finally we plot the Hodge numbers of the resulting Calabi-Yau
threefolds  in Figure $6.3$. In the following figure,
the Calabi-Yau threefolds obtained from Fano type (case (a)) are
registered as symbol \scalebox{0.8}{$\times$} and those from non-symplectic type (case (b))
are registered as symbol $\bullet$. Separately, our new
example is denoted by solid square \scalebox{0.6}{$\blacksquare$} in Figure $6.3$. We
take the Euler characteristic $\chi=2(h^{1,1}-h^{2,1})$ along the
$X$-axis and $h^{1,1}+h^{2,1}$ along the $Y$-axis.
We see that all our examples from non-symplectic type
are located on the integral lattice of the form
\begin{equation}\label{eq:nonsymp_grid}
(X,Y)=(12,26)+m(12,4)+n(-12,4),\quad m,n\in\Z_{\geqslant 0}.
\end{equation}
In this plot the mirror symmetry is considered as the inversion 
$\mu :(X,Y)\longmapsto (-X,Y)$
with respect to the $Y$-axis.
The set of $54$ points with $n>0$ in \eqref{eq:nonsymp_grid} is $\mu$-invariant,
and thus the corresponding Calabi-Yau threefolds have a mirror partner in this set.

{\vspace{-0.1cm}{
\begin{figure}[h]\label{fig:CY}
\centering
\includegraphics[width=\hsize ,clip]{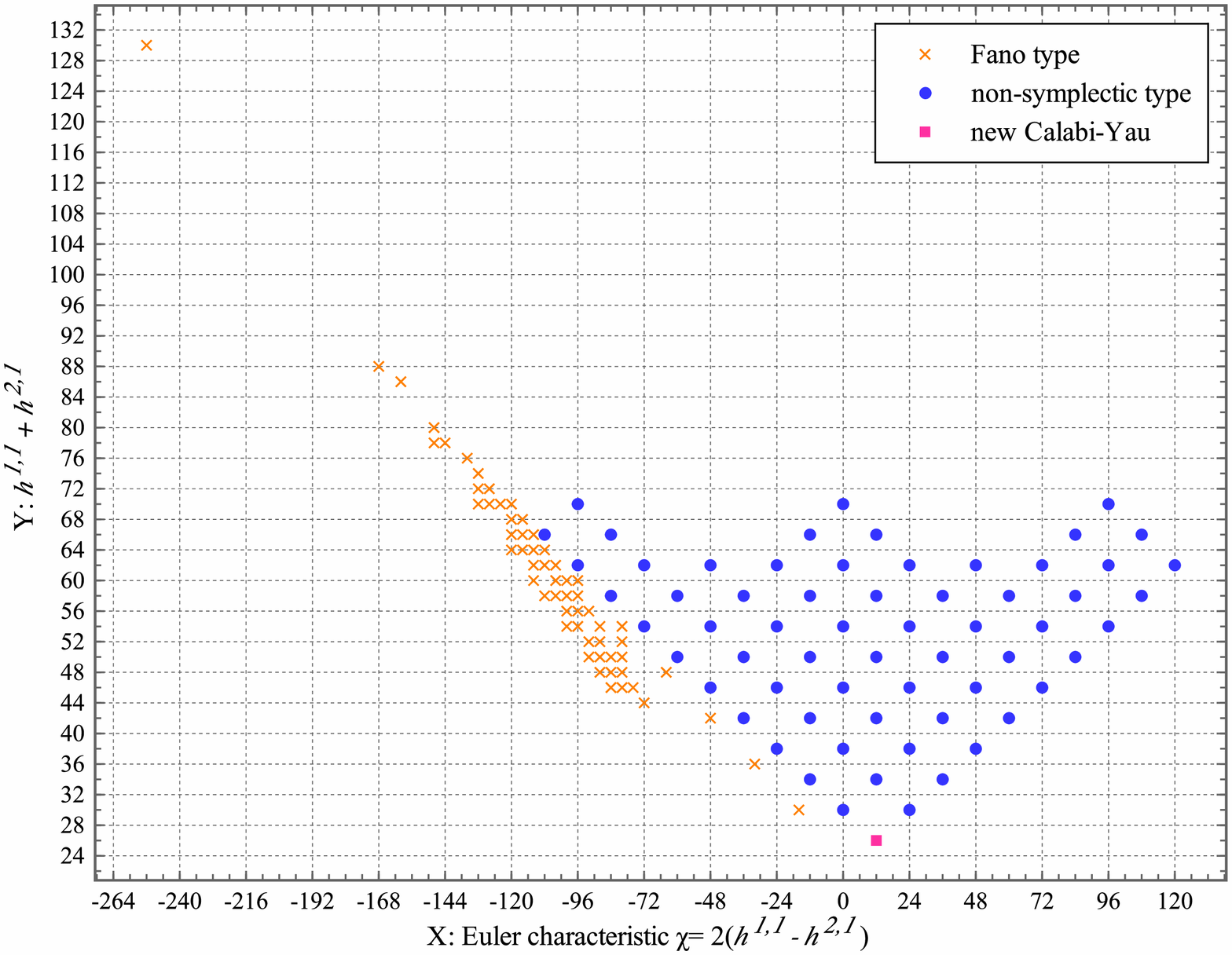}\vspace{0.2cm}
Figure $6.3$. All resulting Calabi-Yau threefolds
\end{figure}
}}

\end{document}